\documentclass[11pt,reqno]{amsart}
\usepackage{amssymb}
\usepackage[all]{xy}
\usepackage{fancyhdr}
\usepackage{hyperref}
\usepackage{tikz-cd}
\usepackage{cite}
\usepackage{amsmath,amsfonts}
\usepackage{graphicx}
\usepackage{wrapfig}
\usepackage{array}
\usepackage{amsthm}
\pagestyle{plain}
\usepackage[left=1.2in, right=1.2in, top=1in, bottom=1in]{geometry}
\usepackage{etoolbox}

\patchcmd{\section}{\scshape}{\bfseries}{}{}
\makeatletter
\renewcommand{\@secnumfont}{\bfseries}
\makeatother
\usetikzlibrary{matrix,arrows,decorations.pathmorphing}
\usetikzlibrary{arrows,positioning}   

\DeclareMathOperator{\Hom}{Hom}
\DeclareMathOperator{\Spec}{Spec}
\DeclareMathOperator{\Frac}{Frac}
\DeclareMathOperator{\Img}{Img}
\DeclareMathOperator{\Ker}{Ker}

\input xy
\xyoption{all}
\thispagestyle{empty}

\usepackage{secdot}  

\setlength\parindent{0pt}
\theoremstyle{plain}
\newtheorem{mydef}{\textbf{Definition}}[section]
\newtheorem{myeg}[mydef]{\textbf{Example}}
\newtheorem{mythm}[mydef]{\textbf{Theorem}}
\newtheorem*{nothma}{\textbf{Theorem A}}
\newtheorem*{nothmb}{\textbf{Theorem B}}
\newtheorem*{nothmc}{\textbf{Theorem C}}
\newtheorem*{nothmd}{\textbf{Theorem D}}
\newtheorem*{nothme}{\textbf{Theorem E}}
\newtheorem*{nothmf}{\textbf{Theorem F}}
\newtheorem*{nothmg}{\textbf{Theorem G}}

\newtheorem{rmk}[mydef]{\textbf{Remark}}
\newtheorem{lem}[mydef]{\textbf{Lemma}}
\newtheorem{pro}[mydef]{\textbf{Proposition}}

\newtheorem*{que}{\textbf{Question}}
\newtheorem*{normk}{\textbf{Remark}}
\patchcmd{\abstract}{\scshape\abstractname}{\normalsize{\textbf{\abstractname}}}{}{}
\begin{document}

\title{Algebraic geometry over hyperrings}


\author{Jaiung Jun}
\address{Department of Mathematical Sciences, Binghamton University, Binghamton, NY 13902, USA}
\curraddr{}
\email{jjun@math.binghamton.edu}


\subjclass[2010]{14A99(primary), 14T99(secondary)}

\keywords{hyperring scheme, hyperring, hypergroup, tropical variety, $\mathbb{F}_1$-geometry}

\date{}

\dedicatory{}

\begin{abstract}
\normalsize{\noindent We develop basic notions and methods of algebraic geometry over the algebraic objects called hyperrings. Roughly speaking, hyperrings generalize rings in such a way that an addition is `multi-valued'. This paper largely consisits of two parts; algebraic aspects and geometric aspects of hyperrings. We first investigate several technical algebraic properties of a hyperring. In the second part, we begin by giving another interpretation of a tropical variety as an algebraic set over the hyperfield which canonically arises from a totally ordered semifield. Then we define a notion of an integral hyperring scheme $(X,\mathcal{O}_X)$ and prove that $\Gamma(X,\mathcal{O}_X)\simeq R$ for any integral affine hyperring scheme $X=\Spec R$. }
\end{abstract}

\maketitle
\tableofcontents

\section{Introduction}
An idea of algebraic objects with a multi-valued operation has been first considered by F.Marty in 1934 (see \cite{marty1935role}). Marty introduced the notion of hypergroups which generalizes groups by stating that an addition is multi-valued. Later, in 1956, M.Krasner introduced the concept of hyperrings in \cite{krasner1956approximation} to use them as a technical tool on the approximation of valued fields. Since its first appearance, very few algebro-geometric perspective of hyperrings has been studied (cf. \cite{Dav1}, \cite{Rota}). However, lately there has been considerable attention drawn to the theory of hyperrings under various motivations.\\  
In \cite{con3}, A.Connes and C.Consani noticed  that Connes' ad\`{e}le class space of a global field has a hyperring structure and initiated the study of algebraic geometry based on hyperrings. Connes and Consani also provided several evidences which show that hyperrings are algebraic structures which naturally appear in relation to number theory. For example, in their archimedean analogue of the p-typical Witt construction, the role of hyperstructures is crucial (see \cite{con6}). Furthermore, by viewing an underlying space of an affine algebraic group scheme $X=\Spec A$ as a set of `$\mathbf{K}$-rational points' for the Krasner's hyperfield $\mathbf{K}$ (Example \ref{krasner}), Connes and Consani showed that $X$ is a hypergroup when $X=\mathbb{G}_m$ and $\mathbb{G}_a$ (cf. \cite{con4}). This result has been partially generalized by the author in \cite{jun2015hyperstructures}. The author also used a notion of hyperrings to generalize the classical notion of valuations in \cite{jun2015valuations}.
\begin{rmk}\label{remark1}
Hyperrings were first considered by Connes and Consani as a possible replacement for monoids on the developments of $\mathbb{F}_1$-geometry. One of main goals in $\mathbb{F}_1$-geometry is to enlarge the category of schemes to realize $C:=\overline{\Spec \mathbb{Z}}$ as `a curve over $\mathbb{F}_1$' in such a way that a (suitably defined) Hasse-Weil zeta function of $C$ over $\mathbb{F}_1$ becomes the complete Riemann zeta function. A first natural candidate was the category of monoids (to enlarge the category of schemes). However, a generalized scheme theory over monoids is essentially toric (cf. \cite{deitmar2008f1}) whereas one expects that $\overline{\Spec \mathbb{Z}}$ has an infinite genus (cf. \cite{con1}, \cite{manin1995lectures}). This motivated Connes and Consani to search possible other candidates rather than monoids and the study of algebraic geometry over more general algebraic objects by many others (cf. \cite{con2}, \cite{Deitmar}, \cite{oliver1}, \cite{pena2009mapping}, \cite{soule2004varietes}).\\ In particular, B.To\"{e}n and M.Vaqui\'{e} first introduced algebraic geometry over a closed monoidal category by generalizing the functorial definition of schemes (see, \cite{toen2009dessous}). For example, the classical scheme theory and monoid schemes as in A.Deitmar \cite{Deitmar} agree with this picture. We further note that, in \cite{noah}, J.Giansiracusa and N.Giansiracusa showed that a scheme theory over semirings can be beautifully linked to tropical geometry by means of the theory of monoid schemes. However, since the category of hyperrings is not a monoidal category, our construction of hyperring schemes does not fit with the above generalized scheme theory. 
\end{rmk}
Besides from the aforementioned Connes and Consani's approaches, M.Marshall and P.G{\l}adki used multirings (which are more general objects than hyperrings) in relation to quadratic forms, Artin-Schreier theory, and real algebraic geometry (cf. \cite{mars2}, \cite{mars1}). J.Tolliver Studied a relation between Krasner's hyperfield approach and P.Deligne's approach \cite{deligne1984corps} to limits of local fields. Also, O.Viro implemented a notion of hyperfield to search for a firm algebraic foundation of tropical geometry (cf. \cite{viro}, \cite{viro2}). Note that supertropical algebras by Z.Izhakian and L.Rowen (cf. \cite{izhakian2014layered}, \cite{iza}) and blueprints by O.Lorscheid (cf. \cite{oliver1}, \cite{lorscheid2015scheme}) provide natural algebraic framework for tropical geometry and have certain connections to hyperrings. Lastly, we note that Connes and Consani proved that some class of hyperrings can be realized as an $\mathfrak{s}$-algebra (an algebra over the sphere spectrum) in their recent paper \cite{connesabsolute}.\\

The following is the organization of the paper:\\
\noindent In \S 2, we provide basic definitions and properties of hyperrings which will be used in sequel. In \S 3, we investigate algebraic properties of hyperrings. We first start by resolving an issue on the construction of quotients of hyperrings. A quotient of a hyperring by a (hyper) ideal has been known only for a special class of (hyper) ideals called normal (hyper) ideals. We first prove that in fact such construction is valid for any (hyper) ideal as follows:
\begin{nothma}$($Proposition \ref{quotienthyperring}$)$
Let $R$ be a hyperring and $I$ be a (hyper) ideal of $R$. Then the set $R/I$ of cosets has a canonical hyperring structure which generalizes the classical case.  
\end{nothma}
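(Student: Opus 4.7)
The plan is to bypass the normality hypothesis by defining an equivalence relation directly on $R$ that is ``adapted'' to the multi-valued nature of the hyperaddition. Concretely, for $a,b\in R$ declare
\begin{equation*}
a\sim b \quad\Longleftrightarrow\quad (a+I)\cap(b+I)\neq\emptyset,
\end{equation*}
where $a+I:=\bigcup_{i\in I}(a+i)$. Reflexivity follows from $0\in I$, and symmetry is immediate. For transitivity, given $a\sim b$ and $b\sim c$ I would pick witnesses $x\in (a+I)\cap(b+I)$ and $y\in(b+I)\cap(c+I)$ and produce an element of $(a+I)\cap(c+I)$ by using the reversibility axiom of the canonical hypergroup $(R,+)$ (i.e.\ $u\in v+w \Leftrightarrow w\in u-v$), the associativity of hyperaddition, and the fact that $I$ is closed under both hyperaddition and taking additive inverses.

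Next, write $\overline a$ for the equivalence class of $a$, take $R/I$ to be the set of classes, and define hyperoperations
\begin{equation*}
\overline{a}\boxplus\overline{b}:=\{\,\overline{c}:c\in a+b\,\},\qquad \overline{a}\cdot\overline{b}:=\overline{ab}.
\end{equation*}
The heart of the proof is the well-definedness of $\boxplus$: if $a\sim a'$ and $b\sim b'$, I must show that for every $c\in a+b$ there exists $c'\in a'+b'$ with $c\sim c'$. The strategy is to unfold the witnesses $a'\in a+I$, $b'\in b+I$ into an element of $(a+b)+(I+I)\subseteq(a+b)+I$ using distributivity and associativity in $R$, and then invoke reversibility together with $I$ being a hyperideal to extract the required $c'$. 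The multiplicative case is comparatively mild since $I$ is absorbing, so $aI\subseteq I$ and $Ib\subseteq I$ immediately give $a'b'\in ab+I$.

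Once well-definedness is established, verifying the hyperring axioms on $R/I$ is essentially a transport of structure: commutativity and associativity of $\boxplus$, existence of the neutral element $\overline{0}$, the inverse $-\overline{a}=\overline{-a}$, the reversibility axiom, and distributivity of $\cdot$ over $\boxplus$ all reduce to the corresponding axioms for $R$ after passing through representatives. To confirm that the construction extends the classical one, I would note that if $R$ is an ordinary ring viewed as a hyperring (so $a+b$ is a singleton), then $\sim$ reduces to $a-b\in I$ and $\boxplus$, $\cdot$ collapse to the usual quotient ring operations.

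The main obstacle is the pair ``transitivity of $\sim$'' and ``well-definedness of $\boxplus$'' --- these are precisely the steps that fail, or at least are delicate, without normality and that force the previously known construction to restrict to normal hyperideals. I expect the key technical input overcoming this obstacle to be the systematic use of the reversibility axiom of the canonical hypergroup $(R,+)$, which is genuinely stronger than what is available in the group setting and allows one to replace the group-theoretic conjugation identities (which dictate normality in the classical theory) by an identity internal to hyperaddition.
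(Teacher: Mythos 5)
Your proposal is correct and follows essentially the same route as the paper: the same hyperoperations on cosets, with the reversibility axiom of the canonical hypergroup serving as the key substitute for normality, both in well-definedness of $\boxplus$ and in the transport of the hyperring axioms. The only cosmetic difference is that you define the relation by $(a+I)\cap(b+I)\neq\emptyset$ and prove transitivity directly, whereas the paper defines it as the set equality $a+I=b+I$ (so transitivity is automatic) and then proves, via reversibility, that this coincides with $(a-b)\cap I\neq\emptyset$ --- an easy check shows your relation is the same one.
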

The classical one-to-one correspondence between ideals and congruence relations is no longer valid in the category of commutative semirings (Example \ref{congruenceexample}). The difficulty stems from the weakness of commutative semirings; lacking additive inverses. One may fix a such problem at a price; by allowing an addition to be `multi-valued', one obtains additive inverses. The following theorem states that such a one-to-one correspondence is still valid for hyperrings. 
\begin{nothmb}$(\S \ref{onetoone})$
Let $R$ be a hyperring. Then there is a one-to-one correspondence between the set of (hyper) ideals of $R$ and the set of congruence relations on $R$.
\end{nothmb}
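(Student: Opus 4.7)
The plan is to exploit Theorem A to produce one direction of the correspondence for free, and then to invert it using the fact that hyperrings (unlike semirings) possess additive inverses together with the reversibility of the canonical hypergroup operation.

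First I would define the two maps explicitly. In one direction, a hyperideal $I \subseteq R$ is sent to the relation $\sim_I$ given by $a \sim_I b$ iff $a$ and $b$ lie in the same coset of $R/I$; by Theorem A, $R/I$ is itself a hyperring, and the quotient map $\pi : R \to R/I$ is a morphism of hyperrings, so $\sim_I$ is a congruence. In the other direction, a congruence $\sim$ is sent to $I_\sim := \{a \in R : a \sim 0\}$. The first routine task is to check that $I_\sim$ is a hyperideal: closure under hyperaddition follows from the fact that $a, b \sim 0$ forces every element of $a+b$ to lie in the class of $0 + 0 = 0$; absorption by multiplication follows from $ra \sim r \cdot 0 = 0$; and closure under the additive inverse follows by applying the congruence to the relation $0 \in a + (-a)$, using $a \sim 0$ to conclude $-a \sim 0$ (this is where the hypergroup structure, as opposed to a mere semigroup structure, is essential).

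Next I would verify that the two assignments are mutually inverse. For $I_{\sim_I} = I$ the containment $I \subseteq I_{\sim_I}$ is immediate, and the reverse follows because $a \sim_I 0$ means precisely that $a$ and $0$ lie in the same coset, i.e.\ $a \in I$. The harder identification is $\sim_{I_\sim} = \sim$. Here I would use the reversibility of the canonical hypergroup operation: $c \in a+b$ if and only if $a \in c + (-b)$. If $a \sim b$, then applying the congruence to the relation $0 \in b + (-b)$ yields some element $c \in a + (-b)$ with $c \sim 0$, so $c \in I_\sim$; hence $(a - b) \cap I_\sim \neq \emptyset$, which is the defining condition for $a \sim_{I_\sim} b$. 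Conversely, if $c \in (a - b) \cap I_\sim$, then $a \in c + b$, and since $c \sim 0$, applying the congruence gives $a \sim b$.

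The main obstacle, and the reason the analogous statement fails for semirings, is the last step of the previous paragraph: recovering $a \sim b$ from the single datum $a - b \in I_\sim$ requires that hyperaddition be rich enough to invert elements and to reverse inclusion in $a+b$. For hyperrings both features are automatic from the canonical hypergroup axioms, so the argument goes through; I do not anticipate that any step beyond this reversibility trick will be more than a direct unwinding of definitions, and the whole proof amounts to checking a handful of set-theoretic identities once the right maps have been written down.
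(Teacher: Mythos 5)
Your proposal is correct and matches the paper's construction: the same two assignments ($I\mapsto$ the coset relation, characterized by $(a-b)\cap I\neq\emptyset$ as in Lemma \ref{quotientequiclasslem}, and $\equiv\;\mapsto \Ker\pi=\{a: a\equiv 0\}$), with reversibility doing the essential work in showing they are mutually inverse. The only difference is presentational: you verify the mutual inverseness by direct computation, whereas the paper (Propositions \ref{idealfromcongruence} and \ref{fromidealtocongruence}) routes the same facts through the strictness of the projection and the first isomorphism theorem for hyperrings.
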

This provides some evidence that hyperrings may be better suited to build an algebraic foundation of tropical geometry.\\
In \S 3.3, we prove several propositions which are analogues to the classical propositions. These results will be used in \S \ref{schmeandzeta} to construct an integral hyperring scheme. In particular, we prove the following version of Hilbert's Nullstellensatz of hyperrings.

\begin{nothmc}$($Lemma \ref{nilradical}$)$
Let $R$ be a hyperring and $I$ be a hyperideal of $R$. Then the following set
\[\sqrt{I}:=\{ r\in R \mid \exists n \in \mathbb{N} \textrm{ such that  } r^n \in I\}\]
is the intersection of all prime hyperideals of $R$ which contain $I$.
\end{nothmc}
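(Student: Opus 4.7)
The plan is to prove the two inclusions $\sqrt{I} \subseteq \bigcap P$ and $\bigcap P \subseteq \sqrt{I}$ separately, where the intersection runs over all prime hyperideals $P$ of $R$ containing $I$. The overall strategy mimics the classical commutative-algebra proof (Zorn's lemma together with a description of the hyperideal generated by a set), and the only genuinely new issue is the bookkeeping forced by the multi-valued addition.

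For the easy inclusion, fix $r \in \sqrt{I}$ and a prime hyperideal $P$ with $I \subseteq P$. By definition $r^n \in I \subseteq P$ for some $n \geq 1$, and a straightforward induction on $n$ using the defining property of primality ($xy \in P$ implies $x \in P$ or $y \in P$) yields $r \in P$.

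For the reverse inclusion, suppose $r \notin \sqrt{I}$ and set $\Sigma := \{J \mid J \text{ is a hyperideal of } R,\ I \subseteq J,\ r^n \notin J \text{ for all } n \geq 1\}$, ordered by inclusion. Then $\Sigma$ is nonempty (it contains $I$), and the union of any chain in $\Sigma$ is again a member of $\Sigma$, since each single hypersum $a+b$ is a subset of $R$ that already lies inside any member of the chain containing both $a$ and $b$, and similarly for $R$-multiplication. Zorn's lemma produces a maximal element $P \in \Sigma$. To verify $P$ is prime, pick $a, b \in R \setminus P$. By maximality, the hyperideals $\langle P, a\rangle$ and $\langle P, b\rangle$ generated by $P$ together with $a$ and $b$ both fall outside $\Sigma$, so there exist $m, n \geq 1$ with $r^m \in \langle P, a\rangle$ and $r^n \in \langle P, b\rangle$. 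Using the expected description of $\langle P, a\rangle$ as a hypersum built out of elements of $P$ and of $Ra$, the product $r^{m+n}$ lies in a hypersum whose every term is of the form \emph{an element of $P$ plus an $R$-multiple of $ab$}. If we assumed $ab \in P$, every such term would belong to $P$, forcing $r^{m+n} \in P$ and contradicting $P \in \Sigma$. Therefore $ab \notin P$, and $P$ is prime.

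The main technical obstacle lies in the last step: one must establish, or read off from the preliminaries of \S 2, a usable description of the hyperideal generated by $P \cup \{a\}$, and then carefully distribute the multi-valued product $r^m \cdot r^n$ to confirm that the only obstruction to membership in $P$ is an $R$-multiple of $ab$. With the quotient construction of Proposition \ref{quotienthyperring} and the routine hyperring arithmetic available, this is essentially bookkeeping, but it is the step where the multi-valued nature of addition genuinely intervenes compared to the classical proof.
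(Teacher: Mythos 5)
Your proposal is correct and follows essentially the same route as the paper: the easy inclusion via primality, then Zorn's lemma applied to hyperideals containing $I$ and missing all powers of $r$, with primality of the maximal element checked by expanding $r^{m+n}$ inside the product of two generated hyperideals. The ``bookkeeping'' you flag is exactly what the paper supplies: the description $\langle P,a\rangle = P+aR$ comes from Lemma \ref{generatingideal}, and the distribution of the product uses the weak doubly-distributive inclusion $(p_1+as_1)(p_2+bs_2)\subseteq p_1p_2+p_1bs_2+p_2as_1+abs_1s_2$ of Remark \ref{double}, after which every term except the last lies in $P$.
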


In \S 4, we study geometric aspects of hyperrings. In the first subsection, we reformulate a tropical variety as a `positive part' of an algebraic set over a hyperfield. This work has been motivated by Viro's paper \cite{viro}. Viro manifested that an algebraic foundation of tropical geometry should be built on the framework of hyperfields and used some hyperfields to realize his goal. For the Viro's hyperfield approach over a semiring approach, see \cite[\S 6]{brugalle2014bit}. The novelty of our approach lies in the use of symmetrization process which is recently introduced by S.Henry (cf. \cite{Henry}). Briefly speaking, a symmetrization process `glues' two semigroups to obtain a hypergroup. In \cite{jaiungthesis}, we showed that such process can be generalized to a hyperring and a semiring in some cases (also see Appendix \ref{sym}). In particular, one always derives a hyperfield by patching two copies of a totally ordered idempotent semifield. By appealing to these results, we may expect to benefit from advantages of both semirings and hyperrings via a symmetrization. More precisely, after introducing a notion of an algebraic set over hyperrings, we prove the following:
\begin{nothmd}$($Proposition \ref{tropicalvareityispositivepart}$)$
Let $\mathbf{R}:=(\mathbb{R}_{max})_S$ be the hyperfield symmetrizing the tropical semifield $\mathbb{R}_{max}=(\mathbb{R}\cup \{-\infty\},+,\max\}$ and $s^n$ be the coordinate-wise symmetrization map. Let $X$ be a tropical variety over $\mathbb{R}_{max}$ in $(\mathbb{R}_{max})^n$. Then, there exist a (suitably defined) algebraic set $X_S$ over $\mathbf{R}$ and the following set bijection:
\[\varphi: X \simeq  (\Img(s^n)\cap X_S).  \]
\end{nothmd}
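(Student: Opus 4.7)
The proposed bijection $\varphi$ is the restriction of the coordinate-wise symmetrization $s^n$, which is injective since $s$ embeds $\mathbb{R}_{\max}$ as the ``positive'' copy inside $\mathbf{R}$; so the claim reduces to producing $X_S \subseteq \mathbf{R}^n$ and verifying the set equality $s^n(X) = \Img(s^n)\cap X_S$. The strategy I would follow is to exploit the corner-locus characterization of tropical varieties---a point $x$ lies on $X = V(f)$ for a tropical polynomial $f = \bigoplus_j a_j x^{\alpha_j}$ exactly when $\max_j(a_j + \alpha_j \cdot x)$ is attained at least twice---and translate this into a \emph{balanced} hyperaddition condition in $\mathbf{R}$.

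For this translation, I would rely on the defining feature of the symmetrized hyperfield: one has $0 \in (+a)\boxplus(-a)$ for every $a \in \mathbb{R}_{\max}$, whereas a purely ``positive'' hyperaddition $(+a_1)\boxplus(+a_2)\boxplus \cdots$ never contains $0$ except trivially. Hence a single naive lift of $f$ to a polynomial over $\mathbf{R}$ with all positive coefficients does not suffice. Instead, for each sign vector $\epsilon = (\epsilon_j) \in \{+,-\}^k$ I form the twisted lift
\[
f_\epsilon(y) \; := \; \epsilon_1\, s(a_1)\, y^{\alpha_1} \,\boxplus\, \cdots \,\boxplus\, \epsilon_k\, s(a_k)\, y^{\alpha_k} \; \in \; \mathbf{R}[y_1,\dots,y_n],
\]
and define
\[
X_S \; := \; \bigl\{\, y \in \mathbf{R}^n \;:\; 0 \in f_\epsilon(y) \textrm{ for some } \epsilon \in \{+,-\}^k \,\bigr\}.
\]
A preliminary bookkeeping step is to check that $X_S$, being a finite union of the hypersurfaces cut out by the $f_\epsilon$'s, is an algebraic set in the sense introduced earlier in \S 4.

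The heart of the argument would be a ``balance lemma'' stating that, for $b_j \in \mathbb{R}_{\max}$, one has $0 \in \epsilon_1 s(b_1) \boxplus \cdots \boxplus \epsilon_k s(b_k)$ in $\mathbf{R}$ if and only if the maximum of $\{b_j\}$ is attained by indices of both signs simultaneously. This is proved by iterated use of the associativity of $\boxplus$ (from \S 3) together with the explicit computation of $(+a)\boxplus(-a)$ and the absorption behavior of strictly smaller terms. Applying this lemma to $b_j = a_j + \alpha_j\cdot x$, the existence of an $\epsilon$ making both signs achieve the maximum is precisely the statement that the tropical maximum is attained at least twice, i.e.\ that $x \in V(f)$; this yields both inclusions at once. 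A tropical variety defined by several polynomials is then handled by intersecting the corresponding $X_S$'s.

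The main technical obstacle I anticipate is the balance lemma itself: multi-valued hyperaddition with many terms is delicate, particularly when several monomials share the maximal absolute value, and one must iterate $\boxplus$ with care (leveraging the associativity results from \S 3) to avoid producing incorrect set-theoretic expressions for $f_\epsilon(y)$. Once the balance lemma is in place, the bijection $\varphi = s^n|_X$ follows immediately from the corner-locus characterization.
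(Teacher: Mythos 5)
Your proposal is correct and follows essentially the same route as the paper: the paper's algebraic set is $HV(f)=\bigcup_i V(\tilde{f}_{\hat{i}})$, where $\tilde{f}_{\hat{i}}$ flips the sign of exactly the $i$-th monomial, and its inline computation is precisely your balance lemma ($0$ lies in the hypersum iff the maximum is attained by monomials of both signs), with the ideal case handled by the same intersection argument. Your union over all $2^k$ sign vectors is a superset of the paper's union over the $k$ one-negative-sign lifts, but the two coincide after intersecting with $\Img(s^n)$, so the difference is immaterial.
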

\noindent In the second subsection, we take a scheme-theoretic point of view and construct an integral hyperring scheme generalizing the classical notion of an integral scheme. One of the most important results in classical scheme theory is that any commutative ring $A$ is isomorphic to the ring of global regular functions on the topological space $X=\Spec A$. When one enlarges the category of commutative rings to the category of commutative monoids or commutative semirings, one still obtains the similar result. In other words, a commutative monoid (resp. a commutative semiring) can be realized as the commutative monoid (resp. the commutative semiring) of global regular functions on some topological space (see the references in Remark \ref{remark1}). Also, when $A$ is a $\mathbb{C}^*$-algebra (not necessarily commutative), there is an analogue known as Gelfand-Naimark-Segal construction. Therefore one is induced to ask the following question.
\begin{que}
Can a hyperring $R$ be realized as the hyperring of global regular functions on the topological space $X=\Spec R$?
\end{que}

We prove that the answer is affirmative when $R$ is a hyperring without any (multiplicative) zero-divisors as follows:
\begin{nothme}$($Theorem \ref{sheaf}, Proposition \ref{fullyfaithful}$)$
Let $R$ be a hyperring with no (multiplicative) zero-divisor. Then, for the hyperring scheme $(X=\Spec R, \mathcal{O}_X)$, we have $\Gamma(X,\mathcal{O}_X) \simeq R$. Furthermore, for each $\mathfrak{p} \in X$, the stalk $\mathcal{O}_{X,\mathfrak{p}}$ of $\mathcal{O}_X$ at $\mathfrak{p}$ exists and is isomorphic to a (suitably defined) localized structure $R_\mathfrak{p}$. In particular, the opposite category of hyperrings without zero-divisors is equivalent to the category of integral affine hyperring schemes.
\end{nothme}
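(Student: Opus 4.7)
The plan is to transfer the classical proof of $\Gamma(\Spec A, \mathcal{O}_{\Spec A}) \simeq A$ for commutative rings into the hyperring setting, keeping careful track of the multi-valued nature of the hyperaddition $\boxplus$. First, for each prime hyperideal $\mathfrak{p} \in X = \Spec R$, I would construct the localization $R_\mathfrak{p}$ as equivalence classes of pairs $(a,s)$ with $s \in S := R \setminus \mathfrak{p}$. Because $R$ has no zero-divisors, the natural relation $(a,s) \sim (b,t) \iff ta = sb$ is already an equivalence, and one equips $R_\mathfrak{p}$ with a hyperring structure via $[a,s] \boxplus [b,t] := \{[c,st] : c \in ta \boxplus sb\}$, verifying that the hyperring axioms transfer from $R$. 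The no-zero-divisor hypothesis enters here to ensure these hyper-sums never degenerate, and it re-enters when defining $R_f$ for the multiplicative set $\{1,f,f^2,\dots\}$ used on distinguished opens.

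Next I would define the structure presheaf on the base of distinguished opens by $\mathcal{O}_X(D(f)) := R_f$ with restriction maps supplied by the universal property of localization, verify the sheaf axioms on this base, and extend to arbitrary opens by the standard compatible-families construction. To show $\Gamma(X, \mathcal{O}_X) \simeq R$ I would follow the classical two-step argument. Injectivity of $R \to \Gamma(X, \mathcal{O}_X)$ uses Theorem C (the Nullstellensatz-type statement) together with the absence of zero-divisors: an element vanishing in every stalk lies in $\sqrt{(0)} = (0)$. For surjectivity, the condition $\bigcup_i D(f_i) = X$ forces $1 \in \langle f_1,\dots,f_n\rangle$ (using Theorem C to rule out a prime hyperideal containing all $f_i$), so $X$ is quasi-compact; a global section then restricts to some $a_i/f_i^{n_i} \in R_{f_i}$, and one clears denominators uniformly and patches using compatibility on $D(f_i f_j)$ to produce a unique $r \in R$ giving back all the $s_i$. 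The stalk identification $\mathcal{O}_{X,\mathfrak{p}} \simeq R_\mathfrak{p}$ is then a direct filtered colimit calculation, since both sides equal $\varinjlim_{f \notin \mathfrak{p}} R_f$.

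The equivalence-of-categories statement then falls out formally once global sections and stalks are computed: $\Gamma$ and $\Spec$ are mutually quasi-inverse between the opposite category of hyperrings without zero-divisors and integral affine hyperring schemes, with naturality inherited from the local-to-global compatibility. I expect the principal obstacle to be the patching step in the surjectivity computation. In the classical setting the compatibility on $D(f_i f_j)$ is an equation $(f_i f_j)^N(a_i f_j^{n_j} - a_j f_i^{n_i}) = 0$; in the hyperring setting it becomes a membership relation $0 \in (f_i f_j)^N \cdot \bigl(a_i f_j^{n_j} \boxplus (-a_j f_i^{n_i})\bigr)$, and one must verify that hyperaddition is \emph{reversible} enough (in the sense that $c \in a \boxplus b$ implies $a \in c \boxplus (-b)$) and associative enough to let these local membership relations be assembled into a single candidate element $r \in R$ restricting correctly to every $R_{f_i}$. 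Closely related is checking that sheafification does not produce phantom sections, which again reduces to the no-zero-divisor hypothesis through the injectivity half of the argument.
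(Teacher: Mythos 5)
Your overall strategy is the paper's: the classical Hartshorne argument, with the Nullstellensatz (Lemma \ref{nilradical}) powering both the injectivity and the quasi-compactness/partition-of-unity steps, and the no-zero-divisor hypothesis collapsing membership relations $0 \in c(a-b)$ with $c\neq 0$ back to honest equalities $a=b$ via uniqueness of additive inverses. That last point is exactly how the paper resolves the patching obstacle you flag: compatibility on $D(h_ih_j)$ yields $h_ja_i = h_ia_j$ on the nose, and the only residual multivaluedness is that the candidate numerator lives in the hyper-sum $\sum_i b_ia_i$; one chooses $\beta_j \in \sum_i b_ia_i$ with $f^na_j = \beta_jh_j$ and then proves all the $\beta_j$ coincide (again by the hyperdomain property). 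So the difficulty you anticipate is real but surmountable along the lines you sketch.

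There are, however, two places where your plan leans on categorical machinery that is unavailable for hyperrings, and the paper has to route around both. First, you propose to define $\mathcal{O}_X$ on the basis by $D(f)\mapsto R_f$ and then extend to arbitrary opens by the standard compatible-families construction. For sheaves of sets this is fine, but the resulting object on a general open $U$ carries a section-wise hyperaddition whose associativity and distributivity are not automatic --- the paper notes that the construction in \cite{Rota} is incomplete for precisely this reason. The paper instead defines $\mathcal{O}_X(U)$ directly as locally-quotient functions into the stalks and then identifies it, using irreducibility of $\Spec R$ (Proposition \ref{irreducible}, a consequence of the hyperdomain hypothesis), with a concrete sub-hyperring $Y(U)=\bigcap_{D(f)\subseteq U}R_f$ of $K=\Frac(R)$; the hyperring structure is then transported along this bijection via Lemma \ref{setwithhyperaddtionisotohyperring}. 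Second, your stalk computation as ``a direct filtered colimit calculation, since both sides equal $\varinjlim_{f\notin\mathfrak{p}}R_f$'' presupposes that this colimit exists in the category of hyperrings; it does not in general, so the paper instead verifies by hand that $R_\mathfrak{p}$ satisfies the universal property of the colimit of the system $\{\mathcal{O}_X(D(f))\}_{f\notin\mathfrak{p}}$. Both gaps are fixable, but the fixes are exactly where the hyperring-specific content of the theorem lives, so they must be supplied rather than cited.
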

\begin{normk}
Note that a similar construction has been done in \cite{Rota}. However, our goal is to retrieve the classical important result: $\Gamma(\Spec A, \mathcal{O}_{\Spec A})\simeq A$ for hyperrings whereas the authors of \cite{Rota} did not consider it.
\end{normk}

We also construct a possible replacement $\mathcal{F}_X$ of the structure sheaf $\mathcal{O}_X$ and obtain the following theorem. 

\begin{nothmf}$($Remark \ref{OFsame}, Propositions \ref{sheaffirre}, \ref{Fgen}$)$
Let $R$ be a hyperring and $X=\Spec R$ be a topological space of prime spectra. We associate a presheaf $\mathcal{F}_X$ of hyperrings on $X$ which satisfies the following: 
\begin{enumerate}
\item
If $R$ is a hyperring with no zero-divisors then $\mathcal{F}_X$ is a sheaf of hyperrings isomorphic to $\mathcal{O}_X$.
\item
If $X$ is irreducible, then $\mathcal{F}_X$ is a sheaf of hyperrings and the canonical map $R \longrightarrow \mathcal{F}_X(X)$ is strict and injective homomorphism of hyperrings. 
\item
If $X$ is irreducible and $R$ is local, then $R \simeq \mathcal{F}_X(X)$.
\end{enumerate}
\end{nothmf}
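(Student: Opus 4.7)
The plan is to give an explicit description of $\mathcal{F}_X$ as a presheaf of ``functions locally represented by fractions,'' then verify each part by comparing with $\mathcal{O}_X$ (Theorem E) and by exploiting Theorem C for the existence of primes. Throughout I denote by $\rho \colon R \to \mathcal{F}_X(X)$ the canonical map arising from the presheaf structure. The key feature of $\mathcal{F}_X$ is that, on each open $U$, its sections form the intersection $\bigcap_{\mathfrak{p} \in U} R_{\mathfrak{p}}$ taken inside a common ambient localization---$\Frac(R)$ when $R$ has no zero-divisors, and $R_{\eta}$ (for $\eta$ the unique minimal prime) in the irreducible case---with restriction maps being literal inclusions.

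For part (1), assume $R$ has no zero-divisors. Then $R$ embeds into $\Frac(R)$ and so does every $R_{\mathfrak{p}}$, so both $\mathcal{F}_X(U)$ and $\mathcal{O}_X(U)$ admit identifications with $\bigcap_{\mathfrak{p} \in U} R_{\mathfrak{p}} \subseteq \Frac(R)$: the $\mathcal{F}_X$-side by construction, and the $\mathcal{O}_X$-side via the explicit form of local sections supplied by Theorem E. The induced isomorphism of presheaves commutes with restriction, proving $\mathcal{F}_X \simeq \mathcal{O}_X$ as sheaves.

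For part (2), the nilradical $\eta$ is a prime hyperideal by Theorem C and, by irreducibility of $X$, is the unique minimal prime of $R$; hence every $R_{\mathfrak{p}}$ maps canonically into $R_{\eta}$ and the intersection description of $\mathcal{F}_X(U)$ is well-defined. The sheaf property follows since restriction maps are literal inclusions inside $R_{\eta}$: given a cover $U = \bigcup U_i$, any two non-empty open subsets of the irreducible $U$ meet, so compatible sections $s_i \in \mathcal{F}_X(U_i)$ coincide as elements of $R_{\eta}$ and glue to a single $s$ which lies in every $R_{\mathfrak{p}}$, $\mathfrak{p} \in U$. To prove $\rho$ injective, assume $\rho(r) = 0$ for $r \in R$. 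Then for every prime $\mathfrak{p}$ there exists $s_{\mathfrak{p}} \notin \mathfrak{p}$ with $s_{\mathfrak{p}} r = 0$; equivalently, the annihilator
\[
\operatorname{Ann}(r) := \{s \in R : sr = 0\}
\]
is not contained in any prime. Using distributivity $(a+b)c = ac + bc$, if $t \in s_1 + s_2$ with $s_i \in \operatorname{Ann}(r)$ then $tr \in s_1 r + s_2 r = \{0\}$, so $\operatorname{Ann}(r)$ is closed under hyperaddition; it is plainly closed under multiplication by $R$, hence a hyperideal. Theorem C, together with standard Zorn-type reasoning, forces any proper hyperideal to lie in some prime, so $\operatorname{Ann}(r) = R$ and $r = 1 \cdot r = 0$. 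Strictness of $\rho$ follows because the localization maps $R \to R_{\mathfrak{p}}$ are strict homomorphisms of hyperrings and strictness is preserved by intersecting subhyperrings inside $R_{\eta}$.

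For part (3), if $R$ is in addition local with maximal hyperideal $\mathfrak{m}$, then every element of $R \setminus \mathfrak{m}$ is already a unit, so the localization map $R \to R_{\mathfrak{m}}$ is an isomorphism. Since $\mathfrak{m} \in X$ we obtain $\mathcal{F}_X(X) \subseteq R_{\mathfrak{m}} \simeq R$, while $\rho \colon R \hookrightarrow \mathcal{F}_X(X)$ from part (2) supplies the reverse inclusion, forcing $R \simeq \mathcal{F}_X(X)$. The principal technical obstacle behind the whole argument is the set-up: one has to make the intersection $\bigcap_{\mathfrak{p}} R_{\mathfrak{p}}$ literally sensible by arranging all localizations as subhyperrings of a single ambient object, and to check that the induced multivalued addition agrees with the one coming from each factor. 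Once that foundational work (carried out in \S 3) is in place, the verifications above are essentially formal.
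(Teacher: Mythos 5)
Your construction is not the paper's $\mathcal{F}_X$, and in the one case that actually carries content beyond Theorem E --- part (2), $X$ irreducible but $R$ not a hyperdomain --- your choice of ambient object breaks the statement. You place all the $R_{\mathfrak{p}}$ inside $R_{\eta}$, the localization at the complement of the unique minimal prime $\eta=Nil(R)$. But $R\setminus\eta$ may contain zero-divisors, and then the canonical map $R\longrightarrow R_{\eta}$ is not injective; since your $\rho:R\longrightarrow \mathcal{F}_X(X)$ factors through it, $\rho$ cannot be injective, and no annihilator argument can repair this. Concretely, take the commutative ring $R=k[x,y]/(x^2,xy)$ (a hyperring): $Nil(R)=(x)$ is prime, so $X=\Spec R$ is irreducible, yet $y\notin\eta$ and $yx=0$ with $x\neq 0$, so $x\mapsto 0$ in $R_{\eta}$; localizing further at $(x,y)$ gives a local counterexample to your part (3) as well. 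The paper avoids this by taking the ambient object to be $K=S^{-1}R$ with $S$ the set of \emph{regular} elements, and by defining $\mathcal{F}_X(U)$ as the set of $u\in K$ admitting, for each $\mathfrak{p}\in U$, a representative $\frac{a}{b}$ with $b\in S$ and $b\notin\mathfrak{p}$ --- the denominators are forced to be regular, not merely outside $\mathfrak{p}$. Injectivity and strictness of $R\longrightarrow K$, hence of $\rho$, are then immediate (Lemma \ref{regulareltebd}), with no need for your annihilator argument.

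A secondary, independent flaw in your injectivity argument: from $\rho(r)=0$ in $R_{\eta}$ you may only conclude $sr=0$ for a \emph{single} $s\notin\eta$, not that $\operatorname{Ann}(r)$ meets the complement of every prime. The maps $R_{\mathfrak{p}}\longrightarrow R_{\eta}$ are themselves non-injective in general, so ``zero in the intersection'' is weaker than ``zero in each $R_{\mathfrak{p}}$''; indeed $\bigcap_{\mathfrak{p}\in U}R_{\mathfrak{p}}$ inside $R_{\eta}$ is only an intersection of images, which is part of why the set-up does not cohere. Your part (1) is fine (there $S=R\setminus\{0\}$, your ambient $\Frac(R)$ coincides with the paper's $K$, and the identification with $\mathcal{O}_X$ via Theorem \ref{sheaf} is exactly Remark \ref{OFsame}), and your gluing argument for the sheaf axiom via irreducibility matches Proposition \ref{sheaffirre}. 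Part (3) is recoverable once the ambient object is corrected, along the lines of Proposition \ref{Fgen}: for $u=\frac{a}{b}$ with $b\in S\setminus\mathfrak{m}$ one shows $b$ is a unit using Lemmas \ref{generatingideal} and \ref{jacobsonradical}, rather than asserting outright that $R\longrightarrow R_{\mathfrak{m}}$ is an isomorphism.
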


The remaining part of the paper is devoted to explain how the theory of hyperring schemes can be related with the classical scheme theory. It follows from the result (Proposition \ref{connect}) of Connes and Consani, one can naturally associate a hyperring to a commutative ring. By using this, we prove the following:

\begin{nothmg}$($Proposition \ref{qsheaflem}$)$
Let $A$ be an integral domain containing the field $\mathbb{Q}$ of rational numbers. Let $R:=A/\mathbb{Q}^{\times}$, $X=(\Spec A,\mathcal{O}_X)$, $Y=(\Spec R,\mathcal{O}_Y)$, and $\pi:A \longrightarrow A/\mathbb{Q}^{\times}$ be the canonical projection map. Then, as topological spaces, $Y$ is homeomorphic to $X$. Furthermore, for an open subset $U \subseteq X$, the hyperring of sections $\mathcal{O}_Y(U)$ is obtained from the ring of sections $\mathcal{O}_X(U)$ by the quotient construction of Connes and Consani as in Proposition \ref{connect}.
\end{nothmg}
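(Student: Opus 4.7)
The plan is to prove the statement in two stages: first, that $\pi$ induces a homeomorphism between the topological spaces underlying $Y$ and $X$; second, that under this identification, $\mathcal{O}_Y(U)$ is precisely the Connes--Consani quotient of the $\mathbb{Q}$-algebra $\mathcal{O}_X(U)$ by $\mathbb{Q}^{\times}$.

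For the topological statement, I would first note that every prime ideal $\mathfrak{p}$ of $A$ is automatically stable under multiplication by $\mathbb{Q}^{\times}$ (since $\mathfrak{p}$ absorbs multiplication by all of $A \supseteq \mathbb{Q}$), so the image $\pi(\mathfrak{p}) \subseteq R$ is a well-defined subset satisfying $\pi^{-1}(\pi(\mathfrak{p})) = \mathfrak{p}$. I would then check directly that $\pi(\mathfrak{p})$ is a prime hyperideal of $R$: closure under the multi-valued addition follows from the Connes--Consani formula $\pi(a) + \pi(b) = \{\pi(a + qb) : q \in \mathbb{Q}^{\times}\}$, together with $a + qb \in \mathfrak{p}$ for $a, b \in \mathfrak{p}$; multiplicative absorption is inherited from $\mathfrak{p}$; and primality of $\pi(\mathfrak{p})$ follows from primality of $\mathfrak{p}$ and surjectivity of $\pi$. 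Conversely, for a prime hyperideal $\mathfrak{q}$ of $R$, $\pi^{-1}(\mathfrak{q})$ is a prime ideal of $A$ by general functoriality of $\pi$ as a hyperring homomorphism. These assignments are mutually inverse, and they send the basic open $D(f) \subseteq X$ to $D(\pi(f)) \subseteq Y$ (and back), yielding the homeomorphism.

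For the sheaf claim on a basic open $D(f) \subseteq X$, we have $\mathcal{O}_X(D(f)) = A_f$, which is itself a $\mathbb{Q}$-algebra and an integral domain, so the Connes--Consani quotient $A_f/\mathbb{Q}^{\times}$ is a well-defined hyperring. Since $R$ is a hyperring without zero-divisors by Proposition \ref{connect}, Theorem E identifies $\mathcal{O}_Y(D(\pi(f)))$ with the localized hyperring $R_{\pi(f)}$. The core computation is then to construct a natural isomorphism
\[
R_{\pi(f)} \simeq A_f/\mathbb{Q}^{\times},
\]
which I would do by exhibiting mutually inverse hyperring homomorphisms via the universal property of localization on one side and of the quotient by the unit-group $\mathbb{Q}^{\times}$ on the other, and by checking that the multiplicatively closed sets $\{f^n\}$ and $\{\pi(f)^n\}$ match under $\pi$.

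For a general open $U$, both structure sheaves satisfy the sheaf axiom on a principal-open cover $\{D(f_i)\}$ of $U$, expressing $\mathcal{O}_X(U)$ and $\mathcal{O}_Y(U)$ as equalizers of the products $\prod_i A_{f_i}$ and $\prod_i R_{\pi(f_i)}$, respectively. The previous step supplies a compatible family of isomorphisms $A_{f_i}/\mathbb{Q}^{\times} \simeq R_{\pi(f_i)}$, and since the Connes--Consani quotient is computed pointwise on the underlying set, one checks that it commutes with the equalizer of $\mathbb{Q}$-algebras in question, yielding $\mathcal{O}_Y(U) \simeq \mathcal{O}_X(U)/\mathbb{Q}^{\times}$. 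The main technical hurdle will be verifying this commutation with the gluing step: one must ensure that an equivalence class of locally compatible sections in $\mathcal{O}_X(U)/\mathbb{Q}^{\times}$ actually lifts to a globally defined $\mathbb{Q}^{\times}$-orbit of sections, which I expect to handle by using integrality of $A$ so that local $\mathbb{Q}^{\times}$-scaling factors agreeing on non-empty overlaps are forced to coincide, producing a single global scalar.
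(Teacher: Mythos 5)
Your proposal is correct in substance, and for the homeomorphism and for basic open sets it matches the paper: Lemma \ref{qutientanddomainhomeo} establishes $\Spec A \simeq \Spec(A/G)$ by exactly your prime-ideal correspondence $\mathfrak{p}\leftrightarrow\pi(\mathfrak{p})$ with $D(f)\leftrightarrow D(\pi(f))$, and Lemma \ref{qlocalem} gives $A_f/\mathbb{Q}^{\times}\simeq R_{\tilde f}$. Where you genuinely diverge is the passage to a general open $U$. You express $\mathcal{O}_X(U)$ and $\mathcal{O}_Y(U)$ as equalizers over a cover by principal opens and then must show that the quotient by $\mathbb{Q}^{\times}$ commutes with gluing; as you note, this requires lifting a family $(s_i)$ with $s_i=q_{ij}s_j$ on overlaps to a genuinely compatible family. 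Strictly speaking the scalars need not ``coincide'' to a single global scalar as you suggest: they form a multiplicative cocycle, which is a coboundary because $\Spec A$ is irreducible (all double and triple overlaps are nonempty) and $A$ is a domain (so each $q_{ij}$ is uniquely determined for a nonzero section and $q_{ij}q_{jk}=q_{ik}$ holds), whence one rescales $s_i\mapsto s_i/q_{i0}$ for a fixed index $0$; this fills your gap but needs to be said. The paper avoids the issue entirely by using the integral-scheme identification $\mathcal{O}_X(U)=\bigcap_{D(f)\subseteq U}A_f$ inside $K=\Frac(A)$, together with its hyperring analogue from Theorem \ref{sheaf}(2) and Lemma \ref{qfraclem}: since $\mathbb{Q}^{\times}$ consists of units of every $A_f$, one has $qa\in A_f$ for some $q\in\mathbb{Q}^{\times}$ if and only if $a\in A_f$, so $(\bigcap_{D(f)\subseteq U}A_f)/\mathbb{Q}^{\times}=\bigcap_{D(f)\subseteq U}(A_f/\mathbb{Q}^{\times})$ with no cocycle argument, because a single representative in $\Frac(A)$ is fixed once and for all; the bijection $f\leftrightarrow\tilde f$ between principal opens of $X$ and of $Y$ then finishes the proof. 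Both routes work; the paper's is shorter precisely because it trades the sheaf-gluing description for the fraction-field description available in the integral case.
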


\textbf{Acknowledgment}
\vspace{0.1cm}

This is a part of the author's Ph.D. thesis \cite{jaiungthesis}. The author thanks his academic advisor Caterina Consani. This paper has not been written without her support. The author also thanks Jeffrey Tolliver for proof-reading the early version of the paper and providing helpful comments. 

\section{Review: Hyperrings}
In this section, we quickly review the basic definitions and properties of hyperrings which will be used in this paper. For more details regarding theory of hyperrings, we refer the readers to \cite{corsini2003applications}.\\ 
In what follows, by a hyperoperation on a nonempty set $H$, we always mean a function \[+ : H \times H \rightarrow \mathcal{P}(H)^{*},\] where $\mathcal{P}(H)^*$ is the set of nonempty subsets of $H$. Note that the reason why we use $+$ notation is that we will only consider the commutative case throughout this paper. We also define, for any nonempty subsets $\forall A, B\subseteq H$, the following notation: 
\[A+B:=\bigcup_{a\in A, b\in B}(a+b).\] 
Therefore the notation $(x+y)+z$ makes sense as the set $A+\{z\}$ with $A:=x+y$. Lastly, for $x,y \in H$, when $x+y$ contains a single element $z$ we let $x+y=z$ for the notational simplicity. With these notations, for a subset $A\subseteq H$, and $a \in H$, by $a+A$ we mean that $\{a\}+A$.
\begin{mydef}
A canonical hypergroup $(H,+)$ is a nonempty set equipped with a hyperoperation $+$ with the following properties:
\begin{enumerate}
\item
$+$ is commutative;
$x+y=y+x\quad  \forall x,y \in H.$
\item
$+$ is associative;
$(x+y)+z=x+(y+z)\quad  \forall x, y, z\in H. $
\item
There is the identity element; $\exists!$ $0_H \in H$ such that $0_H+x=x=x+0_H\quad \forall x\in H.$
\item
There is a unique inverse; $\forall x\in H \quad \exists  !  y \in H \quad \text{such that} \quad 0\in x+y.$ We denote $y=-x$.
\item
Reversibility; $x \in y+z \iff z \in x-y$ $\forall x,y,z \in H$. 
\end{enumerate}
\end{mydef}

We note that hypergroups are more general objects than canonical hypergroups. For example, one does not assume the uniqueness of the identity or an inverse. However, in this paper we only consider canonical hypergroups. From now on, we will simply say hypergroups rather than canonical hypergroups for the sake of convenience.
\begin{mydef}
A Krasner hyperring $(R, + ,\cdot,0,1)$ is a nonempty set $R$ such that $(R,+,0)$ is a hypergroup and $(R,\cdot,1)$ is a commutative monoid which satisfy the following conditions:
 \begin{enumerate}
\item
$+$ and $\cdot$ are compatible; $ \forall x, y, z \in R$,  $ x(y+z)=xy+xz$,  $(x+y)z=xz+yz$.
\item
$0$ is an absorbing element; $\forall x \in R$,  $ x\cdot 0 = 0 = 0\cdot x$.
\item
$0 \neq 1$.
\end{enumerate}
When $(R\setminus \{ 0 \}, \cdot)$ is an abelian group, $(R, +, \cdot,0,1)$ is said to be a hyperfield.
\end{mydef}
Throughout the paper, all hyperrings are assumed to be Krasner hyperrings unless otherwise stated.
\begin{mydef}
Suppose that $(R_1, +_1, \cdot_1)$,  $(R_2, +_2, \cdot_2)$ are hyperrings. A map $\varphi : R_1 \longrightarrow R_2$ is said to be a homomorphism of hyperrings if
\begin{enumerate}
\item
$\varphi$ is a homomorphism of hypergroups; $\varphi(a+_1b) \subseteq \varphi(a)+_2\varphi(b) \quad \forall a, b \in R_1$.
\item
$\varphi$ is a homomorphism of monoids; $\varphi(a\cdot _1b) = \varphi(a)\cdot _2\varphi(b) \quad \forall a, b \in R_1$.
\item
$\varphi$ is said to be strict if $\varphi(a+_1b)= \varphi(a)+_2\varphi(b) \quad \forall a, b \in R_1$.
\end{enumerate}
\end{mydef}
\begin{mydef}\label{hyperringext}
Let $R$ be a hyperring. By a hyperring extension of $R$ we mean a hyperring $L$ such that there is an injective homomorphism $i:R \longrightarrow L$ of hyperrings. A sub-hyperring $H$ of $R$ is a subset of $R$ such that $H$ itself is a hyperring with the induced addition and multiplication.
\end{mydef}

\begin{myeg}(cf. \cite{con3})\label{krasner}
Let $\mathbf{K}:=\{0,1\}$ with the hyperoperation and the multiplication given in the following tables:
\[
\begin{tabular}{ | c | c | c | }
    \hline
    $+$ & $0$ & $1$  \\ \hline
    $0$ & $0$ & $1$  \\ \hline
    $1$ & $1$ & $\{0,1\}$  \\ 
   
    \hline
    \end{tabular} \qquad 
    \begin{tabular}{ | c | c | c | }
        \hline
        $\cdot$ & $0$ & $1$  \\ \hline
        $0$ & $0$ & $0$  \\ \hline
        $1$ & $0$ & $1$  \\ 
       \hline
        \end{tabular}
\]
\vspace{0.1cm}

One can easily check that $\mathbf{K}$ is a hyperfield. $\mathbf{K}$ is called the Krasner's hyperfield.
\end{myeg}

\begin{myeg}(cf. \cite{con3})\label{sign}
Let $\mathbf{S} = \{-1,0,1\}$. A hyperoperation and a multiplication are commutative and given by

\[
\begin{tabular}{ | c | c | c |c| }
    \hline
    $+$ &$-1$ & $0$ & $1$  \\ \hline
    $-1$ & $-1$ & $-1$ & $\{-1,0,1\}$ \\  \hline
    $0$ &$-1$ & $0$ & $1$  \\ \hline
    $1$ & $\{-1,0,1\}$ & $1$ & $1$ \\  
   
    \hline
    \end{tabular} \qquad 
    \begin{tabular}{ | c | c | c |c| }
        \hline
        $\cdot$ &$-1$ & $0$ & $1$  \\ \hline
        $-1$ & $1$ & $0$ & $-1$ \\  \hline
        $0$ &$0$ & $0$ & $0$  \\ \hline
        $1$ & $-1$ & $0$ & $1$ \\ 
       
        \hline
        \end{tabular}
\]
\vspace{0.1cm}

With the above operations, $\mathbf{S}$ becomes a hyperfield and is called the hyperfield of signs; one can observe that the hyperoperation follows the rule of signs.
\end{myeg}
\begin{myeg}(cf. \cite{viro})\label{viro}
Let $\mathcal{T}\mathbb{R}:=(\mathbb{R},+_{T},\cdot)$; the underlying set is the set of real numbers and the multiplication is the usual multiplication of real numbers. The (hyper)addition is given as follows:
\[
x+_{T}y = \left\{ \begin{array}{ll}
\{x\} & \textrm{if $|x|>|y|$}\\
\{y\} & \textrm{if $|x|<|y|$}\\
\{x\} & \textrm{if $x=y$}\\
\ [-|x|,|x|], \textrm{ a closed interval} & \textrm{if $x=-y$}\\
\end{array} \right.
\]
We call $\mathcal{T}\mathbb{R}$ the Viro's hyperfield; $\mathcal{T}\mathbb{R}$ will play a role of the tropical semifield in \S 4.1.
\end{myeg}
\begin{rmk}
The definition of a hyperring extension is subtle since any injective homomorphism of hyperrings does not have to be strict. However, in this paper, we stick with Definition \ref{hyperringext}. With this definition, one can easily check that $\mathcal{T}\mathbb{R}$ is a hyperfield extension of $\mathbf{S}$. In general, when $H$ is a sub-hyperring of a hyperring $R$, for any $a, b \in H$, we have $a+_Hb\subseteq a+_Rb$.  
\end{rmk}
\noindent Next, we review the notion of (prime) ideals for hyperrings and the basic properties of them. 

\begin{mydef}
Let $R$ be a hyperring. 
\begin{enumerate}
\item
A nonempty subset $I$ of $R$ is a hyperideal if: $\forall a, b \in I,\forall r \in R$, we have $a-r\cdot b \subseteq I$.
\item
A hyperideal $I \subsetneq R$ is prime if $I$ satisfies the following property: if $xy \in I$, then $x\in I$ or $y \in I$ $\forall x,y \in I$. 
\item
A hyperideal $I \subsetneq R$ is maximal if $I$ satisfies the following property: if $J \subsetneq R$ is a hyperideal of $R$ which contains $I$, then $I=J$.  
\end{enumerate}
\end{mydef}

\begin{pro}(cf. \cite[Proposition 2.12 and 2.13]{Dav1})
Let $R$ be hyperring. 
\begin{enumerate}
\item
Let $I$ be a proper hyperideal of $R$, i.e. $I \neq R$. Then there exists a maximal hyperideal $\mathfrak{m}$ such that $I \subseteq \mathfrak{m}$.
\item
Any maximal hyperideal $\mathfrak{m}$ is prime.
\end{enumerate}
\end{pro}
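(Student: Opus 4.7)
The plan is to follow the classical commutative-algebra arguments, with care for the multi-valued hyperaddition. For (1), I would apply Zorn's lemma to the collection
\[\mathcal{S} = \{J \subseteq R : J \text{ is a hyperideal}, \ I \subseteq J, \ J \neq R\},\]
ordered by inclusion. This is nonempty since $I \in \mathcal{S}$. Given a chain $\{J_\alpha\}_{\alpha \in \Lambda}$ in $\mathcal{S}$, I would show that $J := \bigcup_\alpha J_\alpha$ is an upper bound in $\mathcal{S}$. The hyperideal axiom requires $a - r\cdot b \subseteq J$ for $a, b \in J$, $r \in R$; given $a \in J_{\alpha}$, $b \in J_\beta$, by the chain condition both lie in some common $J_\gamma$, and then $a - r\cdot b \subseteq J_\gamma \subseteq J$. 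Properness follows because $1 \notin J_\alpha$ for any $\alpha$ implies $1 \notin J$. Zorn's lemma then produces a maximal element of $\mathcal{S}$, which is a maximal hyperideal of $R$ containing $I$.

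For (2), I would argue by contrapositive. Let $\mathfrak{m}$ be a maximal hyperideal and suppose $xy \in \mathfrak{m}$ with $x \notin \mathfrak{m}$; the goal is $y \in \mathfrak{m}$. Consider the hyperideal $\langle \mathfrak{m}, x \rangle$ generated by $\mathfrak{m} \cup \{x\}$. Using that $\mathfrak{m}$ is already a hyperideal and that distributivity in a hyperring gives $r_1 x + r_2 x = (r_1 + r_2)\cdot x$, the generated hyperideal can be described concretely as
\[\langle \mathfrak{m}, x \rangle = \bigcup_{m \in \mathfrak{m},\, r \in R} (m + r\cdot x).\]
By maximality of $\mathfrak{m}$ and the fact that $x \notin \mathfrak{m}$, this hyperideal equals $R$, so $1 \in m + r\cdot x$ for some $m \in \mathfrak{m}$, $r \in R$. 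Multiplying by $y$ and applying distributivity yields
\[y = 1\cdot y \ \in\ (m + r\cdot x)\cdot y \ =\ m\cdot y + r\cdot(xy).\]
Since $m \in \mathfrak{m}$ and $xy \in \mathfrak{m}$, both $m\cdot y$ and $r\cdot(xy)$ lie in $\mathfrak{m}$, hence by the hyperideal closure property (applied with $r=-1$ to get additive closure), $m\cdot y + r\cdot(xy) \subseteq \mathfrak{m}$. Thus $y \in \mathfrak{m}$, as required.

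The main obstacle in both parts is checking that the multi-valued addition does not disrupt the classical reasoning. In (1), the subtle point is ensuring the union of a chain absorbs entire hypersums $a - r\cdot b$ (not just single elements), which the chain condition handles. In (2), the sticking point is the concrete description of $\langle \mathfrak{m}, x \rangle$: one must verify that the set of sums $m + r\cdot x$ is already closed under hyperaddition and $R$-scaling, for which distributivity $(r_1 + r_2)\cdot x = r_1\cdot x + r_2\cdot x$ and closure of $\mathfrak{m}$ are the decisive ingredients.
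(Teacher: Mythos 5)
Your proof is correct. The paper does not actually prove this proposition---it is quoted from Davvaz--Salasi \cite{Dav1}---and your argument is the standard one that the cited source follows. The two delicate points are exactly the ones you identify, and you handle both properly: the union of a chain absorbs whole hypersums $a-rb$ via the chain condition, and the description $\langle \mathfrak{m},x\rangle=\bigcup_{m\in\mathfrak{m},\,r\in R}(m+rx)$ (which is the paper's later Lemma \ref{generatingideal}) only requires the single distributive law $z(y+w)=zy+zw$, which holds with equality in a Krasner hyperring, and not the doubly distributive property, which generally fails (Remark \ref{double}). One micro-step worth making explicit if you write this up: additive closure of $\mathfrak{m}$ via the axiom with $r=-1$ uses $(-1)b=-b$, which follows from $0=0\cdot b\in(1+(-1))b=b+(-1)b$ and uniqueness of additive inverses.
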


\begin{mydef}(cf. \cite[\S 2]{Rota})\label{zarisktopdef}
Let $R$ be a hyperring. We denote by $\Spec R$ the set of prime hyperideals of $R$. One can impose the Zariski topology on $\Spec R$ as in the classical case. In other words,
\begin{equation}\label{zariskitop}
\textrm{ a subset }A \subseteq \Spec R \textrm { is closed } \Longleftrightarrow A=V(I) \textrm{ for some hyperideal $I$ of } R, 
\end{equation}
where $V(I):=\{\mathfrak{p} \in \Spec R \mid I \subseteq \mathfrak{p}\}$.
\end{mydef}

The following proposition shows that Definition \ref{zarisktopdef} indeed makes sense. 
\begin{pro}(cf. \cite[\S 2]{Rota})\label{specRcomputation}
Let $R$ be a hyperring and $X=\Spec R$.
\begin{enumerate}
\item
Let $\{I_j\}_{j \in J}$ be a family of hyperideals of $R$. Then we have
\begin{equation}\label{V(I)compute}
\bigcap_{j \in J}V(I_j)=V(<\bigcup_{j \in J}I_j>), 
\end{equation}
where $<\bigcup_{j \in J}I_j>$ is the smallest hyperideal containing $\bigcup_{j \in J}I_j$. Note that such hyperideal exists since an arbitrary intersection of hyperideals is a hyperideal.
\item
Let $I$ and $I'$ be hyperideals of $R$, then we have
\begin{equation}\label{V(I')compute}
V(I)\bigcup V(I')=V(I \cap I').
\end{equation}
\end{enumerate}
\end{pro}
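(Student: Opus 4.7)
The proof proposal is to mimic the classical argument for the Zariski topology on the spectrum of a ring, the only non-trivial point being to verify that hyperideals behave enough like ordinary ideals for the usual manipulations to go through. I would begin with a short preparatory remark: the smallest hyperideal $\langle \bigcup_{j\in J} I_j \rangle$ containing a union exists because an arbitrary intersection of hyperideals is again a hyperideal, which one checks directly from the defining condition $a - r\cdot b \subseteq I$. I would also record the two consequences of the hyperideal axiom that drive the argument, both following by taking $a=b$, $r=1$ (giving $0 \in a-a \subseteq I$) and then $a=0$ (giving $-rb \in I$, hence $rb \in I$, for all $r \in R$, $b \in I$): namely, every hyperideal contains $0$ and absorbs multiplication by arbitrary elements of $R$.

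For part (1), I would argue by two inclusions. If $\mathfrak{p} \in \bigcap_{j} V(I_j)$, then $I_j \subseteq \mathfrak{p}$ for each $j$, so $\bigcup_j I_j \subseteq \mathfrak{p}$; since $\mathfrak{p}$ is itself a hyperideal, the minimality of $\langle \bigcup_j I_j\rangle$ gives $\langle \bigcup_j I_j \rangle \subseteq \mathfrak{p}$, i.e. $\mathfrak{p} \in V(\langle \bigcup_j I_j\rangle)$. Conversely, if $\mathfrak{p} \in V(\langle \bigcup_j I_j\rangle)$, then $I_j \subseteq \bigcup_j I_j \subseteq \langle \bigcup_j I_j \rangle \subseteq \mathfrak{p}$ for every $j$, giving $\mathfrak{p} \in \bigcap_j V(I_j)$.

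For part (2), the inclusion $V(I) \cup V(I') \subseteq V(I \cap I')$ is immediate: if $\mathfrak{p}$ contains $I$ or $I'$, it a fortiori contains the smaller hyperideal $I\cap I'$. The reverse inclusion is where primality enters. Suppose for contradiction that $\mathfrak{p} \in V(I\cap I')$ but $\mathfrak{p}$ lies in neither $V(I)$ nor $V(I')$. Then I can pick $a \in I \setminus \mathfrak{p}$ and $b \in I' \setminus \mathfrak{p}$. By the absorption property recorded above, $ab = a\cdot b \in I$ (from $a\in I$, $b \in R$) and $ab = b \cdot a \in I'$, hence $ab \in I \cap I' \subseteq \mathfrak{p}$. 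Since $\mathfrak{p}$ is prime this forces $a \in \mathfrak{p}$ or $b \in \mathfrak{p}$, contradicting the choices of $a$ and $b$.

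The only place requiring even a moment's care is the preparatory lemma that hyperideals absorb arbitrary $R$-multiplication and contain $0$; once this is in hand the proof is essentially identical to the classical case, and I do not anticipate a real obstacle. The multi-valued addition plays no role in the argument because everything is phrased purely at the level of containment of subsets, which is stable under hyperoperations.
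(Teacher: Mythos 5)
Your proof is correct, and it is exactly the classical argument that the paper implicitly relies on by citing \cite[\S 2]{Rota} without reproducing a proof: the only hyperring-specific inputs are the absorption and $0\in I$ properties of hyperideals, which you verify correctly from the axiom $a-r\cdot b\subseteq I$. No gaps.
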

\noindent Next, we review the notion of a localization of a hyperring. This construction also has been promoted by R.Procesi-Ciampi and R.Rota in \cite{Rota}.\\ 
For a (multiplicative) submonoid $S$ of a hyperring $R$, one defines the localization $S^{-1}R$ as follows: as a set, $S^{-1}R$ is the set $(R \times S / \sim)$ of equivalence classes, where
\begin{equation}\label{eqrel}
(r_1,s_1)\sim(r_2,s_2) \Longleftrightarrow \exists x \in S \quad s.t. \quad xr_1s_2=xr_2s_1.
\end{equation}
Let $[(r,s)]$ be the equivalence class of $(r,s) \in R\times S$ under the equivalence relation \eqref{eqrel}. A hyperaddition of $S^{-1}R$ is given by
\[
[(r_1,s_1)] + [(r_2,s_2)] = [(r_1s_2+s_1r_2), s_1s_2] = \{ [(y, s_1s_2)] \mid y \in r_1s_2 + s_1r_2 \}.
\]
A multiplication is naturally given as follows:
\[
[(r_1,s_1)]\cdot [(r_2, s_2)] = [(r_1r_2, s_1s_2)].
\]
We denote by $\frac{r}{s}$ an element $[(r,s)]$. Note that as in the classical case, the localization map, $S^{-1}:R \longrightarrow S^{-1}R$ sending $r$ to $\frac{r}{1}$, is a homomorphism of hyperrings.

\begin{pro}(cf. \cite[\S 3]{Dav1})\label{localhyperring}
Let $R$ be a hyperring and $S$ be a (multiplicative) submonoid of $R$.
\begin{enumerate}
\item
For a hyperideal $I$ of $R$, the following set:
\[S^{-1}I:=\{\frac{i}{s}\mid i \in I, s \in S\}\]
is a hyperideal of $S^{-1}R$.
\item
If $\mathfrak{p}$ is a prime hyperideal of $R$ such that $S \cap \mathfrak{p} =\emptyset$, then $S^{-1}\mathfrak{p}$ is a prime hyperideal of $S^{-1}R$.
\item
If $S=R \backslash \mathfrak{p}$ for some prime hyperideal $\mathfrak{p}$ of $R$, then $S^{-1}R$ has the unique maximal hyperideal given by $S^{-1}\mathfrak{p}$.
\end{enumerate}
\end{pro}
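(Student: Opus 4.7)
The plan is to verify the three claims by direct computation, following the classical commutative-algebra template but keeping careful track of the set containments produced by the multi-valued hyperaddition on $S^{-1}R$. Before starting, I would extract two preliminary consequences of the hyperideal axiom $a-rb\subseteq I$ for $a,b\in I$ and $r\in R$: setting $a=b$ and $r=1$ yields $0\in I$, and setting $a=0$ gives closure under $R$-multiplication (since $rb$ is a single element, $-rb\in I$, and then $rb\in I$ by applying the same axiom to $-rb$). These are the two structural properties that the rest of the argument actually uses.

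For (1), I would write $a=i_1/s_1$, $b=i_2/s_2$ with $i_1,i_2\in I$ and $r=r_0/s_0$ with $r_0\in R$, $s_0\in S$. Because multiplication in $S^{-1}R$ is single-valued, $rb$ is the class of $r_0 i_2/(s_0 s_2)$, whose numerator is in $I$ by the preliminary. Using the hyperaddition rule given just before the statement,
\[
a-rb \;=\; \bigl\{\,y/(s_0 s_1 s_2)\;\bigm|\; y\in i_1 s_0 s_2 - s_1 r_0 i_2\,\bigr\},
\]
and since both $i_1 s_0 s_2$ and $s_1 r_0 i_2$ lie in $I$, applying the hyperideal axiom inside $R$ forces every such $y$ into $I$, hence every representative into $S^{-1}I$. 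Nonemptiness is immediate from $0/1\in S^{-1}I$.

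For (2), I would first establish $S^{-1}\mathfrak{p}\neq S^{-1}R$ by contradiction: an equality $1/1=p/s$ with $p\in\mathfrak{p}$ would give some $x\in S$ with $xs=xp\in\mathfrak{p}$, forcing $x\in\mathfrak{p}$ or $s\in\mathfrak{p}$ by primality, contradicting $S\cap\mathfrak{p}=\emptyset$. The same manipulation handles primality: if $r_1 r_2/(s_1 s_2)\in S^{-1}\mathfrak{p}$, unwinding the equivalence produces $xs r_1 r_2\in\mathfrak{p}$ with $x,s\notin\mathfrak{p}$, hence $r_1\in\mathfrak{p}$ or $r_2\in\mathfrak{p}$, and the corresponding factor lies in $S^{-1}\mathfrak{p}$. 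For (3), I would observe that any $r/s\notin S^{-1}\mathfrak{p}$ must have $r\notin\mathfrak{p}$, hence $r\in S$, so that $s/r$ is a two-sided inverse; since every proper hyperideal avoids units, any proper hyperideal of $S^{-1}R$ is contained in $S^{-1}\mathfrak{p}$, which is therefore the unique maximal hyperideal. The only genuine obstacle I anticipate is the bookkeeping in (1); once $S^{-1}I$ is known to absorb $R$-multiplication and contain negatives, parts (2) and (3) are formally identical to their classical counterparts, with the equivalence relation \eqref{eqrel} absorbing the only extra subtlety.
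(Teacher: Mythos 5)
The paper does not actually prove this proposition; it is stated with a citation to Davvaz--Salasi \cite[\S 3]{Dav1} and no argument is given, so there is nothing in the text to compare against. Your verification is correct and is the standard one: the preliminary observations ($0\in I$ and closure of $I$ under multiplication by $R$, both extracted from the single axiom $a-rb\subseteq I$) are exactly what is needed, the representative computation in (1) is valid because the hyperaddition on $S^{-1}R$ is already known to be well defined, and parts (2) and (3) reduce, as you say, to the classical manipulations with the equivalence relation \eqref{eqrel} plus the fact that a hyperideal containing a unit is all of $S^{-1}R$.
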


\noindent The following theorem of Connes and Consani provides a useful way to construct hyperrings from classical commutative algebras.

\begin{mythm}(cf. \cite[Proposition $2.6$]{con3})\label{connect}
Let $A$ be a commutative ring and \[A^\times:=\{a \in A \mid \exists b \in A\textrm{ such that } ab=1\}.\]
Let $G$ be a (multiplicative) subgroup of $A^\times$ and let $A/G=\{aG \mid a \in A\}$ be the set of cosets.  
\begin{enumerate}
\item
Multiplication:
\[
m:A/G \times A/G \longrightarrow A/G, \quad (xG,yG) \mapsto xyG \quad \forall x, y \in A.\] 
\item
Addition:
\[
+:A/G \times A/G \longrightarrow \mathcal{P}(A/G)^*,\]
such that $(xG,yG)\mapsto \{qG \mid q=xt+ys \textrm{ for some }t,s \in G\} \quad \forall x, y \in A.$ 
\end{enumerate}
Then $A/G$ is a hyperring which is called a quotient hyperring.
\end{mythm}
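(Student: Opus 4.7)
My plan is to verify the hyperring axioms one by one, exploiting the fact that $G$ is closed under multiplication and inversion to reparameterize sums. The crux will be showing associativity of the hyperaddition, since this is where the multi-valued nature of $+$ genuinely matters; everything else reduces to standard coset manipulations in the ring $A$.

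First I would check that the hyperaddition is well-defined: if $x_1G=x_2G$ and $y_1G=y_2G$, write $x_2=x_1g_1$, $y_2=y_1g_2$ with $g_i\in G$, and note that $(x_2t+y_2s)G=(x_1(g_1t)+y_1(g_2s))G$; as $t,s$ range over $G$, so do $g_1t,g_2s$. This also gives the useful simplification
\[
xG+yG=\{(xt+ys)G:t,s\in G\}.
\]
Commutativity is then immediate. For the neutral element, $0G=\{0\}$, and $0G+xG=\{(xs)G:s\in G\}=\{xG\}$; uniqueness follows because if $eG$ is also neutral then $eG\in eG+0G=\{eG\}$ but also $eG=0G$ forces $e\in 0\cdot G=\{0\}$. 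For inverses I take $(-x)G$: setting $t=s=1$ gives $xt+(-x)s=0$, so $0G\in xG+(-x)G$. Uniqueness of the inverse follows because $0G\in xG+zG$ requires $xt+zs=0$ in $A$ for some $t,s\in G$, which gives $zG=z(sG)=(zs)G=(-xt)G=(-x)G$.

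For associativity, both sides expand to the same subset of $A/G$. Starting from
\[
(xG+yG)+zG=\bigl\{((xt+ys)u+zv)G:t,s,u,v\in G\bigr\},
\]
I distribute in the ring to get representatives of the form $xtu+ysu+zv$. Setting $t'=tu$, $s'=su$, $v'=v$ shows the right-hand set equals $\{(xt'+ys'+zv')G:t',s',v'\in G\}$, since $(t',s',v')$ ranges over all of $G^3$ (take $u=1$ for the reverse inclusion). By the symmetric computation starting from $xG+(yG+zG)$ one lands on the same set, yielding associativity. Reversibility is then essentially a ring-theoretic tautology: $xG\in yG+zG$ means $xg=yt+zs$ for some $g,t,s\in G$, equivalently $zs=xg-yt$, which says $zG\in xG-yG$; the converse direction is identical.

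The remaining axioms are straightforward. Multiplication $(xG)(yG)=(xy)G$ is well-defined because $G$ is a subgroup of $A^\times$, and inherits commutativity, associativity, and the identity $1G$ from $A$. Distributivity follows from
\[
xG\cdot(yG+zG)=\{x(yt+zs)G:t,s\in G\}=\{(xyt+xzs)G:t,s\in G\}=xyG+xzG,
\]
using distributivity in $A$. The element $0G$ is absorbing because $0\cdot x=0$ in $A$, and $0\neq 1$ in $A/G$ since otherwise $1\in 0\cdot G=\{0\}$ in $A$, contradicting $0\neq 1$ in $A$. The main conceptual obstacle is the associativity reparameterization; everything else is bookkeeping, and no subtlety about nonempty sums arises since the hypersum of two cosets is always nonempty by construction.
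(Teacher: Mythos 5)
Your verification is correct: the reparameterization $(t,s,u,v)\mapsto(tu,su,v)$ (with $u=1$ for the reverse inclusion) does show both iterated sums equal $\{(xa+yb+zc)G \mid a,b,c\in G\}$, and the remaining axioms (well-definedness, inverses, reversibility, full distributivity via $x(yt+zs)=xyt+xzs$) are checked soundly; only your sentence on uniqueness of the neutral element is garbled as written, though the intended comparison of $eG+0G=\{eG\}$ with $eG+0G=\{0G\}$ is fine. Note that the paper gives no proof of this statement at all—it is imported from Connes and Consani \cite{con3}—so your direct axiom-by-axiom check is exactly the standard argument, with the coset-reparameterization for associativity being the only step that needs real care.
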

One can easily observe that for a field $k$ with $|k|\geq 3$, the Krasner's hyperfield $\mathbf{K}$ is isomorphic to the quotient hyperring $k/k^\times$.

\section{Algebraic aspects of Hyperrings}
Hypergroups have been studied by many mathematicians in various fields; harmonic analysis (cf. \cite{liltvinov}), simple groups (cf. \cite{campaigne1940partition}), fuzzy logic (cf. \cite{corsini2003applications}), and association schemes together with Tits' buildings (cf. \cite{zieschangmax}) to name a few. On the other hand, hyperrings have not brought much attention after Krasner's work until recently. In this section, we develop the basic algebraic theory of hyperrings which will be used to develop the geometric theory in the next section.
\subsection{Quotients of hyperrings}
\label{Quotient of hyperrings}
In algebra, the construction of a quotient object is usually essential to develop an algebraic theory. A particular case of quotient construction for hyperrings has been studied by means of normal hyperideals (cf. \cite{Dav2},\cite{Dav1}). Recall that a hyperideal $I$ of a hyperring $R$ is normal if
\[x+I-x \subseteq I \quad \forall x \in R.\]
\begin{rmk}
In \cite{Dav1}, B.Davvaz and A.Salasi introduced the notion of a normal hyperideal $I$ of a hyperring $R$ so that the following relation
\begin{equation}\label{davvreal}
x \equiv y \Longleftrightarrow (x-y)\cap I \neq \emptyset
\end{equation}
becomes an equivalence relation. One may observe that when $R$ is a commutative ring, any ideal of $R$ is normal. In other words, in the classical case, the normal condition is redundant. 
\end{rmk}
\noindent The definition of a normal hyperideal looks too restrictive for applications. For example, suppose that $R$ is a hyperring extension of the Krasner hyperfield $\mathbf{K}$. Then, for any $x \in R$, we have $x+x=\{0,x\}$, therefore $x=-x$. It follows that the only (non-zero) normal hyperideal of $R$ is $R$ itself. In this subsection, we prove that the relation \eqref{davvreal} is, in fact, an equivalence relation without appealing to the normal condition on a hyperideal $I$. Furthermore, we show that one can canonically construct a quotient hyperring $R/I$ for any hyperideal $I$ of a hyperring $R$.\\ 

Let $R$ be a hyperring and $I$ a hyperideal of $R$. We introduce the following relation on $R$ (cf. \cite{Dav1}) 
\begin{equation}\label{relationdef}
 x \sim y \Longleftrightarrow x+I=y+I,
\end{equation}
where $x+I:=\bigcup_{a \in I}(x+a)$ and the equality on the right side of \eqref{relationdef} is meant as an equality of sets. Clearly, the relation \eqref{relationdef} is an equivalence relation.
\begin{rmk}
When $R$ is a commutative ring, \eqref{relationdef} is the classical equivalence relation obtained from an ideal I: $x\sim y \Longleftrightarrow x-y \in I$.
\end{rmk}
\noindent The following lemma provides an equivalent description of \eqref{relationdef}.
\begin{lem}\label{quotientequiclasslem}
Let $R$ be a hyperring and $I$ be a hyperideal of $R$. Let $\sim$ be the relation on $R$ as in \eqref{relationdef}. Then
\begin{equation}\label{relationequiconditon}
x \sim y \Longleftrightarrow (x-y) \cap I \neq \emptyset, \quad \forall x,y \in R.
\end{equation}
\end{lem}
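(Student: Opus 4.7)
The plan is to prove the two implications separately, with the forward direction being essentially immediate from the existence of $0 \in I$ and the reversibility axiom of the hypergroup, and the backward direction requiring a slightly more delicate argument based on associativity and the closure of $I$ under hyperaddition.

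For the forward implication, I would first observe that $0 \in I$ (which follows from the hyperideal axiom applied to any $b \in I$ with $r = 1$, yielding $b - b \subseteq I$ and hence $0 \in I$ via the inverse axiom of the hypergroup). Then $x \in x + I = y + I$, so there exists $i \in I$ with $x \in y + i$. The reversibility axiom (property (5) of a canonical hypergroup) immediately gives $i \in x - y$, so $i \in (x - y) \cap I$.

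For the backward implication, I would start from $i \in (x - y) \cap I$ and use reversibility in the other direction to conclude $x \in y + i \subseteq y + I$. To upgrade this to the equality $x + I = y + I$, the key lemma I would establish first is that $I$ is closed under hyperaddition, i.e.\ $a + b \subseteq I$ whenever $a, b \in I$. This is not entirely immediate from the hyperideal axiom as stated: I would first use $0 \in I$ together with the axiom applied to $b \in I$ and $r = 1$ to get $-b \subseteq I$, and then apply the axiom again to $a \in I$ and $-b \in I$ with $r = 1$ to conclude $a - (-b) = a + b \subseteq I$. Once this closure property is in hand, for any $a \in x + I$, writing $a \in x + j$ for some $j \in I$, associativity gives $a \in (y + i) + j = y + (i + j) \subseteq y + I$. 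The reverse inclusion follows symmetrically: from $x \in y + i$, commutativity and reversibility yield $y \in x + (-i)$, and since $-i \in I$ the same argument gives $y + I \subseteq x + I$.

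The main obstacle I anticipate is simply being careful about the set-theoretic meaning of $x + I$ (a union of hyperoperation outputs, hence itself a subset of $R$), and making sure that the associativity step $a \in (y+i) + j = y + (i+j)$ really is a genuine equality of subsets rather than just an inclusion; this is guaranteed by the associativity axiom (2) in the definition of a canonical hypergroup. Beyond that, every step is a direct manipulation of the hypergroup axioms and the hyperideal axiom, so the argument should be short and self-contained.
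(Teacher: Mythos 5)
Your proof is correct and follows essentially the same route as the paper's: the forward direction via $0\in I$ and reversibility, and the backward direction via reversibility plus associativity $t\in x+\alpha\subseteq (y+\beta)+\alpha=y+(\alpha+\beta)\subseteq y+I$. The only difference is that you explicitly verify $0\in I$ and the closure of $I$ under hyperaddition (which the paper uses implicitly), and you handle the reverse inclusion via $y\in x+(-i)$ rather than invoking the symmetry $(x-y)\cap I\neq\emptyset\Longleftrightarrow(y-x)\cap I\neq\emptyset$; both are sound.
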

\begin{proof}
Notice that $(x-y)\cap I \neq \emptyset \Longleftrightarrow (y-x)\cap I \neq \emptyset$. Suppose that $x \sim y$. Then by definition we have $x+I=y+I$. By choosing $0 \in I$, it follows that $x+0=x \in y+I$. Thus, $x \in y +a$ for some $a \in I$.
By the reversibility property of $R$, we know that $x \in y+a$ is equivalent to $a \in x-y$.
Thus we derive that $a \in (x-y)\cap I$, hence $(x-y)\cap I \neq \emptyset$.\\ 
Conversely, suppose that $(x-y)\cap I \neq \emptyset$. We need to show that $x+I=y+I$. Since the argument is symmetric, it is enough to show that $x+I \subseteq y+I$. For any $t \in x+I$, there exists $\alpha \in I$ such that $t \in x+ \alpha$. Since $(x-y) \cap I \neq \emptyset$, it follows that there exists $\beta \in (x-y)\cap I$. From the reversibility, this implies that $x \in y +\beta$. Therefore, we have $t \in x+\alpha \subseteq (y+\beta)+\alpha =y+(\alpha + \beta).$ This implies that there exists $\gamma \in (\alpha+\beta)$ such that $t \in y+\gamma$. But since $\alpha, \beta \in I$ we have $\gamma \in I $, thus $t \in y+I$. 
\end{proof}
\noindent Next, we use the equivalence relation \eqref{relationdef} to define quotient hyperrings. We will use the notations $[x]$ and $x+I$ interchangeably for the equivalence class of $x$ under \eqref{relationdef}. We will also use frequently the reversibility property of a hyperring without explicitly mentioning it.

\begin{mydef}\label{quohypdef}
Let $R$ be a hyperring and $I$ be a hyperideal of $R$. We define 
\[ R/I:= \{[x] \mid x \in R\}\]
to be the set of equivalence classes of \eqref{relationdef} on $R$. We impose on $R/I$ two binary operations: an addition:
\begin{equation}\label{quoadd}
[a] \oplus [b]=(a+I) \oplus (b+I):=\{c+I \mid c \in a+b \}
\end{equation}
and a multiplication:
\begin{equation}\label{quomul}
[a] \odot [b] :=a \cdot b + I.
\end{equation}
\end{mydef}

\begin{pro}\label{quotienthyperring}
With the notation as in Definition \ref{quohypdef}, $R/I$ is a hyperring with an addition $\oplus$ and a multiplication $\odot$.
\end{pro}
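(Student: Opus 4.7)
The plan is to verify each hyperring axiom for $R/I$ in the following order: (i) well-definedness of $\oplus$ and $\odot$ on equivalence classes, (ii) the canonical hypergroup axioms for $(R/I,\oplus)$, (iii) the commutative monoid axioms for $(R/I,\odot,[1])$, and finally (iv) distributivity, the absorbing property of $[0]$, and $[0]\neq [1]$. Throughout I would work with the description of $\sim$ from Lemma \ref{quotientequiclasslem}, namely $[x]=[y]\Leftrightarrow (x-y)\cap I\neq\emptyset$, since this is much easier to manipulate than the original set equality $x+I=y+I$, and I would use the reversibility property of $R$ repeatedly.

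Well-definedness is where the real work lives, and I expect this to be the main obstacle because $\oplus$ is defined using the representatives $a,b$ and the image is an entire collection of classes. Suppose $[a]=[a']$ and $[b]=[b']$; pick $\alpha\in (a-a')\cap I$ and $\beta\in(b-b')\cap I$, so that $a\in a'+\alpha$ and $b\in b'+\beta$. For any $c\in a+b$, associativity and commutativity in $R$ yield $c\in(a'+b')+(\alpha+\beta)$, hence $c\in c'+\gamma$ for some $c'\in a'+b'$ and $\gamma\in\alpha+\beta\subseteq I$ (using that $I$ is a hyperideal). Then $\gamma\in(c-c')\cap I$ so $[c]=[c']\in[a']\oplus[b']$; by symmetry $[a]\oplus[b]=[a']\oplus[b']$. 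For $\odot$, from $a\in a'+\alpha$ and $b\in b'+\beta$ the same distributivity calculation gives $ab\in a'b'+(a'\beta+\alpha b+\alpha\beta)$, and every element in the parenthesized set lies in $I$ because $I$ absorbs multiplication by $R$ and is closed under hyperaddition; hence $(ab-a'b')\cap I\neq\emptyset$.

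For the hypergroup structure, commutativity and associativity of $\oplus$ reduce to the corresponding axioms in $R$ after unfolding $([a]\oplus[b])\oplus[c]=\{[d]\mid d\in(a+b)+c\}$ and similarly for the right side. The identity $[0]$ works because $[0]\oplus[a]=\{[d]\mid d\in 0+a\}=\{[a]\}$, and it is unique by the standard two-line argument. For inverses I would show $[{-}a]$ is an inverse of $[a]$, and then check uniqueness: if $[0]\in[a]\oplus[b]$, some $d\in a+b$ satisfies $d\in I$, hence $(a+b)\cap I\neq\emptyset$, i.e.\ $(b-({-}a))\cap I\neq\emptyset$, so $[b]=[{-}a]$. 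Reversibility is the one axiom where I need reversibility of $R$: if $[x]\in[y]\oplus[z]$, there is $w\in y+z$ and $i\in I$ with $x\in w+i$; reversibility in $R$ gives $z\in w-y$ and $w\in x-i$, hence $z\in(x-y)-i$, which yields $[z]\in[x]\oplus[{-}y]$. The converse is symmetric.

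The remaining verifications are formal. Associativity, commutativity, and the identity $[1]$ for $\odot$ come directly from the monoid structure of $R$; $[0]\odot[a]=[0\cdot a]=[0]$ gives the absorbing property; and distributivity unfolds to $\{[ad]\mid d\in b+c\}=\{[e]\mid e\in ab+ac\}$ by distributivity in $R$. Finally, $[0]\neq[1]$ holds under the standing assumption that $I$ is a proper hyperideal, since otherwise $1\in I$ would force $(1-0)\cap I\neq\emptyset$. This completes the verification of all the Krasner hyperring axioms.
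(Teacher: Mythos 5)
Your proof is correct and follows essentially the same route as the paper's: well-definedness via the characterization $(x-y)\cap I\neq\emptyset$ from Lemma \ref{quotientequiclasslem} together with the reversibility of $R$, followed by a direct verification of the hypergroup, monoid, and distributivity axioms. The only differences are cosmetic — you vary both representatives at once and use the weak double-distributivity inclusion for $\odot$, where the paper varies one representative at a time and multiplies $(a-a')$ by $b$ — and your explicit observation that $[0]\neq[1]$ requires $I$ to be proper is a point the paper silently omits.
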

\begin{proof}
We first prove that operations $\oplus$ and $\odot$ are well defined. For the addition, it is enough to show that $(a+I) \oplus (b+I) = (a' + I) \oplus (b +I)$ for any $[a]=[a']$. In fact, we only have to show one inclusion since the argument is symmetric. Thus, we show that $[a]\oplus[b] \subseteq [a']\oplus[b]$. If $z + I \in (a+I) \oplus (b+I)$, then we may assume $z \in a+b$. We need to show that there exists
$w \in a' +b$ such that $[z]=[w]$. But if $z \in a+b=b+a$ then $a \in z-b$. In particular,
\begin{equation}\label{inclusionquoproof}
(a-a') \subseteq (z-b)-a' = z-(a'+b).
\end{equation}
Since $[a]=[a']$, it follows from Lemma \ref{quotientequiclasslem} that there exists $\delta \in (a-a') \cap I$. It also follows from \eqref{inclusionquoproof} that we have $\delta \in z-w \textrm{ for some } w \in a'+b$ and this implies $(z-w) \cap I \neq \emptyset$. Therefore, we have $[z]=[w]$. For the multiplication, we need to show that $a \cdot b + I = a' \cdot b + I$. Since $(a-a') \cap I \neq \emptyset$, we have $\delta \in (a-a') \cap I \subseteq (a-a')$ which implies that $(a-a')b \cap I \neq \emptyset$. Therefore, $[a\cdot b]=[a'\cdot b]$ from Lemma \ref{quotientequiclasslem}. Hence, $\oplus$ and $\odot$ are well defined.\\
Next, we prove that $(R/I, \oplus)$ is a hypergroup. Clearly $\oplus$ is commutative. We claim that 
\[ X:=([a]\oplus[b])\oplus[c]=\{[d]=d+I\mid d \in a+b+c\}:=Y.\]
If $[w] \in X$, then $[w] \in [r]\oplus[c]$ for some $[r] \in [a]\oplus[b]$. We may assume $w \in r+c$ and $r \in a+b$. Then we have $w \in r+c \subseteq (a+b)+c=a+b+c$. Thus $[w] \in Y$. Conversely, if $[z] \in Y$ then we may assume $z \in a+b+c=(a+b)+c$. This means $z \in t+c$ for some $t \in a+b$. In turn, this implies $[z] \in [t]\oplus[c], [t] \in [a]\oplus[b]$. Hence $[z] \in X$. It follows from the same argument with $[a]\oplus([b]\oplus[c])$ that the operation $\oplus$ is associative. The class $[0]$ is the unique neutral element. In fact, we have $[0]\oplus[x]=\{[d]\mid d \in 0+x=x\}=[x]$. Suppose that we have $[w]$ such that $[w]\oplus[x]=[x]$ for all $[x] \in R/I$. For $x \in I$, we have $x \in w+x=x+w$. Hence $w \in x-x \subseteq I$. But one can see that $[w]=[0]$ for all $w \in I$ from Lemma \ref{quotientequiclasslem}. Therefore, the neutral element is unique. Next, we claim that $[0]\in [x]\oplus [y] \Longleftrightarrow [y]=[-x]$. Since $0 \in (x-x)$ 
we have $[0] \in [x]\oplus[-x]$. Conversely, suppose that $0+I \in (x+I) \oplus (y+I)$ for some $y \in R$. We need
to show that $y+I=-x+I$. Since $0+I \in (x+I) \oplus (y+I)$, there exists $c \in x+y$ such that $c+I=I$. It follows that $c \in I$. Moreover, from $c \in x+y=y-(-x)$, we have that $c \in (y-(-x)) \cap I$. Thus $(y-(-x))\cap I \neq \emptyset$ and $[y]=[-x]$. For the reversibility property, if $[x] \in [y]\oplus[z]$, then we need to show that $[z] \in [x] \oplus [-y]$. But $[x] \in [y]\oplus[z] \Longleftrightarrow (x+I) \in (y+I)\oplus(z+I) \Longleftrightarrow x+I=c+I$ for some $c \in y+z$. From the reversibility property of $R$, $z \in c-y$. Thus $[z] \in [c] \oplus [-y]$. But we have $[x]=[c]$, hence $[z] \in [x]\oplus[-y]$. Finally, we only have to prove that $\oplus,\odot$ are distributive. i.e.
\[([a]\oplus [b])\odot[c]=([a] \odot[c]) \oplus ([b]\odot [c]).\]
But this directly follows from that of $R$. This completes the proof.
\end{proof}
\noindent In the sequel, we consider $R/I$ as a hyperring with the addition $\oplus$ and the multiplication $\odot$.

\begin{pro}\label{quotientstrict}
Let $R$ be a hyperring and $I$ be a hyperideal of $R$. The projection map 
\[\pi:R \longrightarrow R/I, \quad x \mapsto [x]\] 
is a strict, surjective homomorphism of hyperrings with $\Ker \pi =I$.
\end{pro}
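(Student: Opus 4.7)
The plan is to verify each of the four assertions (surjective, homomorphism, strict, kernel equals $I$) directly from Definition \ref{quohypdef} and Proposition \ref{quotienthyperring}, with the kernel calculation being the only place where Lemma \ref{quotientequiclasslem} is genuinely needed. Surjectivity is built into the definition of $R/I$ as $\{[x] \mid x \in R\}$, and the fact that $\pi(1)=[1]$ is the multiplicative identity of $R/I$ is immediate, so these require no argument beyond a sentence.

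For the homomorphism properties, the key observation is that the hyperaddition $\oplus$ on $R/I$ was defined in \eqref{quoadd} precisely so as to make $\pi$ strict. Indeed, I would compute directly
\[
[a]\oplus[b] \;=\; \{\,c+I \mid c \in a+b\,\} \;=\; \{\pi(c) \mid c \in a+b\} \;=\; \pi(a+b),
\]
which simultaneously verifies both the inclusion $\pi(a+b)\subseteq \pi(a)\oplus\pi(b)$ required for $\pi$ to be a homomorphism of hypergroups and the reverse inclusion required for strictness. For multiplicativity, the defining formula \eqref{quomul} gives $\pi(a\cdot b)=[a\cdot b]=a\cdot b+I=[a]\odot[b]$, with nothing further to check.

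For the kernel, I would compute $\Ker\pi = \{x\in R \mid [x]=[0]\}$. Applying Lemma \ref{quotientequiclasslem} with $y=0$, this condition is equivalent to $(x-0)\cap I \neq \emptyset$. Since $0$ is the neutral element of $(R,+)$, the hyperoperation satisfies $x-0=x+0=\{x\}$, so $(x-0)\cap I\neq\emptyset$ reduces to $x\in I$. Hence $\Ker\pi = I$.

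There is really no substantive obstacle in this proof: the hyperring structure on $R/I$ was engineered in Proposition \ref{quotienthyperring} to make $\pi$ strict, and Lemma \ref{quotientequiclasslem} converts the a priori set-theoretic equivalence $x+I=0+I$ into the concrete membership condition $x\in I$. The only minor point requiring care is noting that $x-0=\{x\}$ is a singleton (not a larger hyperaddition set), which is what collapses the kernel condition to simple membership in $I$.
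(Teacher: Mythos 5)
Your proposal is correct and follows essentially the same route as the paper: surjectivity and multiplicativity are immediate from the definitions, strictness comes from the observation that $[a]\oplus[b]$ is by construction exactly the image $\pi(a+b)$, and the kernel computation reduces to $x\in I$ (the paper does this by the set identity $x+I=0+I$, you do it via Lemma \ref{quotientequiclasslem} with $y=0$; the two are interchangeable one-liners). No gaps.
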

\begin{proof}
Clearly, $\pi$ is surjective and $\pi(xy)=\pi(x)\pi(y)$. By the definition of a hyperaddition \eqref{quoadd}, we have $\pi(x+y)=[x+y]\subseteq [x]\oplus [y]$. This shows that $\pi$ is a homomorphism of hyperrings. For the strictness, take $[c] \in [x]\oplus [y]$. Then there exists $z \in x+y$ such that $[z]=[c]$. It follows that $\pi(z)=[z]=[c]$, thus $\pi$ is strict. For the last assertion, suppose that $\pi(x)=[0]$. This implies that $[x]=x+I=0+I=[0]$, hence $x\in I$. Therefore $\Ker\pi=I$.
\end{proof}

\noindent The next proposition shows that a quotient hyperring satisfies the universal property as in the classical case. 

\begin{pro}\label{univproofquotient}
Let $R$ and $H$ be hyperrings and $\varphi: R \longrightarrow H$ be a homomorphism of hyperrings. Suppose that I is a hyperideal of $R$ such that
$I \subseteq \Ker\varphi$. Then there exists a unique hyperring homomorphism $\tilde{\varphi} : R/I \longrightarrow H$ such that $ \varphi= \tilde{\varphi}\circ \pi$, where $\pi :
R \longrightarrow R/I$ is the projection map as in Proposition \ref{quotientstrict}.
\end{pro}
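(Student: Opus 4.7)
The plan is to define $\tilde\varphi$ on representatives by $\tilde\varphi([x]) := \varphi(x)$, verify it is well defined (the one nontrivial step), and then transport the homomorphism and uniqueness statements from $\varphi$ essentially for free. Concretely, I would proceed as follows.

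First, I would prove well-definedness, which is the main obstacle and where the hyperring structure plays its role. Suppose $[x]=[y]$ in $R/I$; by Lemma \ref{quotientequiclasslem} this means $(x-y)\cap I\neq\emptyset$, so pick $\delta\in(x-y)\cap I$. By the reversibility axiom, $\delta\in x-y$ is equivalent to $x\in y+\delta$. Applying $\varphi$ and using that it is a hyperring homomorphism, together with $\delta\in I\subseteq\Ker\varphi$, we get
\[
\varphi(x)\in\varphi(y+\delta)\subseteq\varphi(y)+\varphi(\delta)=\varphi(y)+0=\{\varphi(y)\},
\]
forcing $\varphi(x)=\varphi(y)$. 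Hence the rule $\tilde\varphi([x])=\varphi(x)$ does not depend on the chosen representative.

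Next I would check that $\tilde\varphi$ respects the multiplicative and hyperadditive structures. For multiplication: $\tilde\varphi([x]\odot[y])=\tilde\varphi([xy])=\varphi(xy)=\varphi(x)\varphi(y)=\tilde\varphi([x])\tilde\varphi([y])$. For the hyperaddition, by the definition \eqref{quoadd},
\[
\tilde\varphi([x]\oplus[y])=\{\varphi(c)\mid c\in x+y\}=\varphi(x+y)\subseteq\varphi(x)+\varphi(y)=\tilde\varphi([x])+\tilde\varphi([y]),
\]
so $\tilde\varphi$ is a homomorphism of hyperrings. The identity $\tilde\varphi\circ\pi=\varphi$ is immediate from the definitions.

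Finally, for uniqueness, if $\psi:R/I\to H$ is any hyperring homomorphism with $\psi\circ\pi=\varphi$, then for every $[x]\in R/I$ we have $\psi([x])=\psi(\pi(x))=\varphi(x)=\tilde\varphi([x])$, so $\psi=\tilde\varphi$. The only genuinely hyperring-flavored ingredient in the whole argument is the use of reversibility plus $I\subseteq\Ker\varphi$ to collapse $\varphi(y+\delta)$ to the singleton $\{\varphi(y)\}$; everything else mirrors the classical quotient construction.
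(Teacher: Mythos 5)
Your proposal is correct and follows essentially the same route as the paper: define $\tilde\varphi([x])=\varphi(x)$, use $I\subseteq\Ker\varphi$ to collapse $\varphi(y+\delta)$ to $\{\varphi(y)\}$ for well-definedness, and then inherit the homomorphism and uniqueness properties from $\varphi$. The only cosmetic difference is that you reach $x\in y+\delta$ via Lemma \ref{quotientequiclasslem} and reversibility, whereas the paper extracts $x\in y+c$ directly from $x+I=y+I$; you also spell out the compatibility with $\oplus$ and $\odot$, which the paper leaves implicit.
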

\begin{proof}
Let us define
\[\tilde{\varphi} : R/I \longrightarrow H, \quad \tilde{\varphi}([x])=\varphi(x)\quad \forall [x]\in R/I.\]
We first have to show that $\tilde{\varphi}$ is well defined. Let $[x]=[y]$ for $x,y \in R$. Then we have
$x+I=y+I$, hence $x \in y+c$ for some $c \in I$. Since $c \in I \subseteq \Ker\varphi$, it follows that 
\[\varphi(x) \in \varphi(y+c) \subseteq \varphi(y)+\varphi(c)=\varphi(y)+0=\varphi(y).\]
Therefore, $\varphi(x)=\varphi(y)$ and $\tilde{\varphi}$ is well defined. Furthermore, since $\varphi$ is a hyperring homomorphism, $\tilde{\varphi}$ is also a hyperring homomorphism. By the construction, we have $ \varphi= \tilde{\varphi}\circ \pi$. The uniqueness is clear.
\end{proof}

\subsection{Congruence relations}\label{onetoone}
In this subsection, we define the notion of a congruence relation on a hyperring $R$ and prove that there is a one-to-one correspondence between hyperideals and congruence relations on $R$. Note that in the theory of semirings, this correspondence fails in general as the following example shows. For the basic definitions and properties of semirings we refer the readers to \cite{semibook}.

\begin{myeg}\label{congruenceexample}
Let $M:=\mathbb{Q}_{\geq 0}$ be the semifield of nonnegative rational numbers with the usual addition and the usual multiplication. Since $M$ is a semifield, $\{0\}$ and $M$ are the only ideals of $M$. One can easily see that $\{0\}$ corresponds to the congruence relation: 
\[x \equiv 0 \qquad \forall x \in M\] 
and $M$ corresponds to the congruence relation: 
\[x \equiv y \Longleftrightarrow x=y \qquad \forall x,y \in M.\] 
However there are more congruence relations. For example, one may consider the following relation: $\forall x,y \in M$,
\[x \equiv_2 y \Longleftrightarrow \exists k \in 2\mathbb{Z}+1 \textrm{ such that } k(x-y) \in 2\mathbb{Z}.\]
Clearly, $\equiv_2$ is reflexive and symmetric. Furthermore, suppose that $x\equiv_2 y$ and $y\equiv_2 z$. Then there exist odd integers $k_1$ and $k_2$ such that $k_1(x-y)$, $k_2(y-z) \in 2\mathbb{Z}$. One can easily check that $k_1k_2(x-z) \in 2\mathbb{Z}$. Therefore $\equiv_2$ is an equivalence relation. Next, when $x \equiv_2 y$ and $\alpha \equiv_2 \beta$, $\exists$ odd integers $k$ and $t$ such that $k(x-y),t(\alpha-\beta) \in 2\mathbb{Z}$. It follows that
\[kt((x+\alpha)-(y+\beta))=tk(x-y)+kt(\alpha-\beta) \in 2\mathbb{Z}.\]
Also, one can easily see that $kt(x\alpha-y\beta) \in 2\mathbb{Z}$. Hence, we conclude that 
\[x\equiv_2 y \textrm{ and }\alpha \equiv_2 \beta \Longrightarrow x+\alpha \equiv_2 y+\beta \textrm{ and } x\alpha\equiv_2 y\beta.\]
Therefore $\equiv_2$ is a congruence relation on $M$ which does not have a corresponding ideal of $M$. This example shows that an one-to-one correspondence between ideals and congruence relations fails in this case. In fact, it is well known that if $M$ is a semiring having no nontrivial proper congruence relations then either $M=\mathbb{B}$ (the boolean semifield) or a field (cf.\cite[\S $7$]{semibook}).
\end{myeg}

\noindent We emphasize that in hyperring theory, a sum of two elements is no longer an element in general but a set. Therefore, to define a congruence relation on a hyperring $R$, we need a suitable notion stating when two subsets of $R$ are equivalent. The following definition provides such a notion.

\begin{mydef}
Let $R$ be a hyperring and $\equiv$ be an equivalence relation on $R$. Let $A,B$ be two subsets of $R$. We write $A \equiv B$ when the following condition holds:
\begin{equation}\label{A=B}
\forall a\in A, \forall b \in B\qquad  \exists a' \in A \textrm{ and }\exists b' \in B \textrm{ such that } a\equiv b'\textrm{ and } a'\equiv b.
\end{equation}
\end{mydef}

\begin{mydef}\label{congruencerealdef}
Let $R$ be a hyperring. A congruence relation $\equiv$ on $R$ is an equivalence relation on $R$ satisfying the following property: 
\begin{equation}\label{congeqn}
\forall x_1,x_2,y_1,y_2 \in R,\quad x_1 \equiv y_1,  x_2 \equiv y_2 \quad \Longrightarrow\quad  x_1x_2 \equiv y_1y_2,\quad   x_1+x_2 \equiv y_1+y_2.
\end{equation}
\end{mydef}

\noindent The following proposition shows that when a congruence relation $\equiv$ is defined on $R$, there is a canonical hyperring structure on the set $R/\equiv$ of equivalence classes. We let $[r]$ denote an equivalence class of $r \in R$ under $\equiv$.

\begin{pro}\label{R/= is a hyperring}
The set $(R/\equiv):=\{[r]|\mid r\in R\}$ is a hyperring, where the addition is defined by
\begin{equation}\label{R/= add}
[x]+[y]:=\{[t]\mid t\in x'+y'\quad \forall [x']=[x],[y']=[y]\}\quad \forall x,y,x',y' \in R,
\end{equation}
and the multiplication law is given by
\begin{equation}\label{R/= mult}
[x]\cdot[y]:=[xy] \quad \forall x,y \in R.
\end{equation}
\end{pro}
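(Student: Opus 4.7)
The strategy is to mirror the construction of Proposition \ref{quotienthyperring} for quotients by hyperideals, now using the congruence condition (\ref{congeqn}) in place of the hyperideal-based translation arguments. The first step is to verify that both operations on $R/\!\equiv$ are well-defined. Multiplication is immediate: if $x\equiv x'$ and $y\equiv y'$, the congruence condition gives $xy\equiv x'y'$, so $[xy]=[x'y']$. For addition, the compatibility relation on subsets defined by (\ref{A=B}) yields, for any choice of representatives,
\[
\{[t]\mid t\in x+y\} = \{[t]\mid t\in x'+y'\},
\]
because every $t\in x+y$ admits some $s'\in x'+y'$ with $t\equiv s'$, and symmetrically; this is precisely what the quantifier structure in (\ref{A=B}) provides.

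Once well-definedness is settled, the hypergroup axioms translate cleanly. Commutativity is inherited from $R$. For associativity, both $([x]+[y])+[z]$ and $[x]+([y]+[z])$ evaluate to $\{[t]\mid t\in x+y+z\}$, exactly as in the associativity argument of Proposition \ref{quotienthyperring}. The class $[0]$ is a neutral element since $[0]+[x]=\{[t]\mid t\in 0+x\}=\{[x]\}$; its uniqueness follows because if $[w]+[x]=[x]$ for every $[x]$, then the choice $[x]=[0]$ forces $\{[w]\}=[w]+[0]=\{[0]\}$, hence $[w]=[0]$. For the inverse, $[0]\in [x]+[-x]$ since $0\in x+(-x)$ in $R$. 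To see uniqueness, suppose $[0]\in [x]+[y]$; then some $c\in x+y$ satisfies $c\equiv 0$, and reversibility in $R$ gives $y\in c+(-x)$. Applying (\ref{congeqn}) to $c\equiv 0$ and $-x\equiv -x$ yields $c+(-x)\equiv 0+(-x)=\{-x\}$ as subsets, and unraveling (\ref{A=B}) against the singleton on the right forces $y\equiv -x$, whence $[y]=[-x]$.

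The reversibility axiom follows similarly: if $[x]\in [y]+[z]$, then some $c\in y+z$ satisfies $c\equiv x$, and reversibility in $R$ gives $z\in c+(-y)$, hence $[z]\in [c]+[-y]=[x]+[-y]$. The multiplicative monoid structure, with $[1]$ as unit, is inherited directly from $R$, and $[0]\cdot [x]=[0\cdot x]=[0]$ makes $[0]$ absorbing. Distributivity reduces to the identity $\{[tz]\mid t\in x+y\}=\{[s]\mid s\in xz+yz\}$, which is immediate from distributivity in $R$ together with well-definedness of multiplication.

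The main obstacle I anticipate is managing the subset-level equivalence (\ref{A=B}): its two-sided quantifier structure must be applied carefully at each step where a subset-valued expression (such as $c+(-x)$ or $x+y$) is to be replaced via a representative. Once one is fluent translating between the relations ``$a\in A$ with $A\equiv B$'' and ``there exists $b\in B$ with $a\equiv b$,'' every axiom of a hyperring collapses onto the corresponding axiom in $R$, and the proof proceeds in close parallel to Proposition \ref{quotienthyperring}.
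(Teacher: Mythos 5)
Your proposal is correct and follows essentially the same route as the paper's proof: establish well-definedness of both operations from the congruence condition \eqref{congeqn} and the subset-equivalence \eqref{A=B}, then reduce each hypergroup axiom and distributivity to the corresponding property of $R$. The only differences are organizational — you fix representatives once well-definedness is settled (where the paper re-quantifies over representatives at each step), and your uniqueness arguments for the neutral element and for the inverse (via $c+(-x)\equiv\{-x\}$ rather than the paper's $y'=-x'$, $-x\equiv -x'$) are valid minor variants.
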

\begin{proof}
Firstly, we prove that the addition and the multiplication are well defined. One easily sees that \eqref{R/= add} does not depend on representatives since it is already defined by all possible representatives. Also it follows from \eqref{congeqn} that the multiplication is well defined.\\ 
Secondly, we claim that $(R/\equiv,+)$ is a hypergroup. We first show that $+$ is associative by proving the following equality
\[X:=\{[t]\mid t \in x'+y'+z',[x']=[x],[y']=[y],[z']=[z]\}=([x]+[y])+[z]:=Y.\]
Indeed, if $t \in x'+y'+z'$ then $t \in \alpha+z'$ for some $\alpha \in x'+y'$. This implies that $[t] \in [\alpha]+[z]$ and $[\alpha] \in [x]+[y]$, hence $[t] \in Y$. Conversely, if $[t] \in ([x]+[y])+[z]$ then $[t] \in [\alpha]+[z]$ for some $[\alpha] \in [x]+[y]$. From \eqref{R/= add}, we have $t \in \alpha' +z'$ for some $\alpha',z' \in R$ such that $[\alpha']=[\alpha],[z']=[z]$. Also $[\alpha'] \in [x]+[y]$ since $[\alpha]=[\alpha']$. This implies that $\alpha' \in x'+y'$ and $t \in x'+y'+z'$ for some $x',y' \in R$ such that $[x']=[x],[y']=[y]$. The operations are trivially commutative. The class $[0]$ works as the zero element. Indeed, if $[t] \in [x]+[0]$ then $t \in x'+y'$ with $x' \equiv x$ and $y' \equiv 0$. It follows from \eqref{congeqn} that $x'+y' \equiv x$, hence $t \equiv x$. Thus $[x]+[0]=[x]$. An additive inverse of $[x]$ is $[-x]$. Indeed, since $0 \in x-x$ it is clear that $[0] \in [x]+[-x]$. Next, we show that an inverse is unique. If $[0] \in [x]+[y]$ then we have $0 \in x'+y'$ with $x' \equiv x$ and $y' \equiv y$. It follows that $y'=-x'$ and $-x \equiv -x'$, therefore $y \equiv y'=-x'\equiv -x$. Thus an additive inverse uniquely exists. The reversibility property directly follows from that of $R$ and the fact that $[x+y] \subseteq [x]+[y]$. This proves that $(R/\equiv,+)$ is a hypergroup.\\ 
Finally, one can observe that $[1]$  works as the identity element. Therefore, all we have to show is the distributive property:
\[ [z]([x]+[y])=[z][x]+[z][y]=[zx]+[zy],\quad  \forall [x],[y],[z] \in R/\equiv.\]
If $[\alpha] \in [x]+[y]$, then $\alpha \in x'+y'$ with $[x']=[x],[y']=[y]$. This implies $z\alpha \in zx'+zy'$. But since $[zx']=[zx],[zy']=[zy]$, it follows that $[z\alpha] \in [zx]+[zy]$. Conversely if $[t] \in [zx]+[zy]$ then $t \in \alpha +\beta$ with $[\alpha]=[zx]$, $[\beta]=[zy]$. Thus $\alpha+\beta \equiv zx+zy=z(x+y)$, and $t \equiv z\gamma$ for some $\gamma \in x+y$. This completes the proof.   
\end{proof}
\noindent In what follows, for a hyperring $R$ and a congruence relation $\equiv$ on $R$, we always consider $R/\equiv$ as a hyperring with the structure defined in Proposition \ref{R/= is a hyperring}.
\begin{pro}\label{contruencestrict}
Let $R$ be a hyperring and $\equiv$ be a congruence relation on $R$. Then the map 
\[ \pi:R \longrightarrow R/\equiv, \quad r \mapsto [r] \quad \forall r \in R\]
is a strict surjective hyperring homomorphism. 
\end{pro}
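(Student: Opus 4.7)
The plan is to verify the three conditions defining a strict, surjective homomorphism of hyperrings, noting that surjectivity is immediate from the construction of $R/\equiv$ as a set of equivalence classes, and that the multiplicative compatibility $\pi(ab)=[ab]=[a]\cdot[b]=\pi(a)\cdot\pi(b)$ follows directly from the definition \eqref{R/= mult}. The preservation of $0$ and $1$ is likewise automatic. So the entire content of the proposition reduces to handling the hyperaddition.

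First I would establish the inclusion $\pi(a+b)\subseteq \pi(a)+\pi(b)$ required for $\pi$ to be a homomorphism of hyperrings. For any $c\in a+b$, taking $a'=a$ and $b'=b$ as representatives in the defining formula \eqref{R/= add} shows that $[c]\in [a]+[b]=\pi(a)+\pi(b)$, so $\pi(a+b)=\{[c]\mid c\in a+b\}\subseteq \pi(a)+\pi(b)$.

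The crux of the argument, and the only part that genuinely uses the congruence hypothesis, is the reverse inclusion $\pi(a)+\pi(b)\subseteq \pi(a+b)$, which gives strictness. Let $[t]\in [a]+[b]$. Unwinding \eqref{R/= add}, there exist representatives $a'\equiv a$ and $b'\equiv b$ together with an element $t\in a'+b'$. The congruence property \eqref{congeqn} then yields $a'+b'\equiv a+b$ as subsets of $R$ in the sense of \eqref{A=B}. By this relation applied to the element $t\in a'+b'$, there exists $s\in a+b$ with $t\equiv s$. Therefore $[t]=[s]\in \pi(a+b)$, which is exactly what is needed.

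The main obstacle is really just conceptual: one has to correctly interpret two compound definitions simultaneously, namely that $[x]+[y]$ is defined via a \emph{union} over all representatives in \eqref{R/= add}, whereas the relation $A\equiv B$ on subsets via \eqref{A=B} gives, for each element of one side, a congruent partner on the other. Once these definitions are aligned, each inclusion is a one-line application, and the proof is complete upon recording that $\pi$ is surjective since every class $[r]\in R/\equiv$ is hit by its representative $r\in R$.
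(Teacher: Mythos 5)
Your proof is correct and follows essentially the same route as the paper: the paper also dispatches surjectivity and the homomorphism property as immediate, and establishes strictness by taking $[t]\in[a]+[b]$ with $t\in a'+b'$ for representatives $a'\equiv a$, $b'\equiv b$, invoking \eqref{congeqn} to get $a'+b'\equiv a+b$, and then using \eqref{A=B} to produce $s\in a+b$ with $t\equiv s$. Your reading of \eqref{R/= add} as a union over representatives matches how the paper itself uses that definition, so there is no gap.
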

\begin{proof}
The map $\pi$ is clearly a surjective hyperring homomorphism. We prove that $\pi$ is also strict by showing that $[x]+[y] \subseteq [x+y]$. If $[t] \in [x]+[y]$ then $t \in x'+y'$ for some $x',y' \in R$ such that $x' \equiv x$ and $y' \equiv y$. It follows from \eqref{congeqn} that $x+y \equiv x'+y'$. From \eqref{A=B}, there exists $\alpha \in x+y$ such that $[\alpha]=[t]$. Therefore, $[t]=[\alpha] \in [x+y]$ and $\pi$ is strict. 
\end{proof}

\begin{pro}\label{idealfromcongruence}
Let $\pi:R \longrightarrow R/\equiv$ be the canonical projection as in Proposition \ref{contruencestrict}. Let $I=\Ker\pi$. Then
\[\varphi:R/I \longrightarrow R/\equiv,\quad  <r> \mapsto [r]\quad \forall r \in R\] is an isomorphism of hyperrings, where $<r>$ is an equivalence class of $r$ in $R/I$ under the equivalence relation \eqref{relationdef} and $[r]$ is an equivalence class of $r$ in $R/\equiv$ under $\equiv$.
\end{pro}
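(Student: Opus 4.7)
The plan is to verify that $\varphi$ is a well-defined strict isomorphism of hyperrings. The well-definedness, the homomorphism property, and surjectivity will turn out to be essentially mechanical consequences of strictness of $\pi$ (Proposition \ref{contruencestrict}) and Lemma \ref{quotientequiclasslem}; the one genuine subtlety is injectivity, which requires a careful application of the convention \eqref{A=B}.

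First I would check that $I=\Ker\pi$ is a hyperideal of $R$, so that $R/I$ makes sense by Proposition \ref{quotienthyperring}. Since $\pi$ is a hyperring homomorphism, for $a,b \in I$ and $r \in R$ every element of $a-rb$ maps into $\pi(a)+(-\pi(r))\pi(b)=[0]+[0]=\{[0]\}$, so $a-rb \subseteq I$. Next, for well-definedness of $\varphi$: if $<r>=<s>$ in $R/I$, Lemma \ref{quotientequiclasslem} produces $t \in (r-s)\cap I$, so $t \equiv 0$ and, by reversibility, $r \in s+t$. Applying $\pi$ and using its strictness, $[r] \in [s]+[t]=[s]+[0]=\{[s]\}$, hence $[r]=[s]$.

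For the homomorphism and strictness properties I would directly compare the defining formulas. Multiplication is immediate from \eqref{quomul}: $\varphi(<r>\odot<s>)=\varphi(<rs>)=[rs]=[r]\cdot[s]$. For addition, definition \eqref{quoadd} gives $\varphi(<r>\oplus<s>)=\{[c] \mid c \in r+s\}$, and strictness of $\pi$ yields $\{[c] \mid c \in r+s\}=\pi(r)+\pi(s)=[r]+[s]$, so $\varphi$ is in fact strict. Surjectivity is immediate since every $[r]\in R/\equiv$ equals $\varphi(<r>)$.

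The remaining and main point is injectivity. Suppose $\varphi(<r>)=\varphi(<s>)$, i.e.\ $r \equiv s$. Since $\equiv$ is a congruence and $-s\equiv -s$, property \eqref{congeqn} gives the set-level relation $r+(-s)\equiv s+(-s)$ in the sense of \eqref{A=B}. Because $0 \in s-s$, the backward half of \eqref{A=B} produces some $a' \in r-s$ with $a'\equiv 0$, that is, $a' \in I$. Therefore $(r-s)\cap I \neq \emptyset$, and Lemma \ref{quotientequiclasslem} forces $<r>=<s>$. The only real obstacle in the whole argument is recognizing that, to turn the pointwise relation $r\equiv s$ into an element of $r-s$ landing inside $I$, one must unpack the asymmetric-looking definition \eqref{A=B} of equivalence of subsets applied to the sum $r+(-s)$ and $s+(-s)$; once this is done, the rest of the proof is a chain of direct substitutions.
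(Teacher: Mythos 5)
Your proof is correct. It differs from the paper's in that the paper disposes of this proposition in one line: it invokes the first isomorphism theorem for \emph{strict} hyperring homomorphisms (Proposition $2.11$ of \cite{Dav1}) applied to the strict surjection $\pi:R\longrightarrow R/\equiv$ of Proposition \ref{contruencestrict}, whose kernel is $I$. What you have done is open up that black box and re-prove the relevant special case by hand: checking that $I$ is a hyperideal, that $\varphi$ is well defined, multiplicative, strictly additive, surjective, and injective. The two arguments rest on exactly the same structural fact (strictness of $\pi$), so this is the same route made self-contained rather than a genuinely different one; the benefit of your version is that it isolates the only nontrivial point, namely converting $r\equiv s$ into an element of $(r-s)\cap I$ by applying \eqref{congeqn} to $r-s\equiv s-s$ and then instantiating \eqref{A=B} at $0\in s-s$, a step that is invisible when one simply cites the external theorem. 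One small remark: your bijectivity-plus-strictness conclusion does implicitly use that a strict bijective homomorphism has a homomorphic inverse (a bijective homomorphism of hyperrings need not be an isomorphism without strictness), which is worth stating explicitly since it is precisely the point of the strictness hypothesis in the cited isomorphism theorem.
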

\begin{proof}
This follows from Proposition \ref{contruencestrict} and Proposition $2.11$ of \cite{Dav1} which states that the first isomorphism theorem for hyperrings holds when a given homomorphism is strict.
\end{proof}

\noindent It follows from Proposition \ref{idealfromcongruence} that for a congruence relation $\equiv$ on R, one can find a hyperideal $I$ of $R$ such that $R/I \simeq (R/\equiv)$. Conversely, in the next proposition, we prove that for any hyperideal $I$, one can find a congruence relation $\equiv$ such that $R/I \simeq (R/\equiv)$. 

\begin{rmk}
Note that some of the algebraic properties of a hyperring differ greatly from those of a commutative ring. For example, a hyperring does not satisfy the doubly distributive property (cf. Remark \ref{double}). Thus one should be careful when generalizing classical results of commutative rings to hyperrings.
\end{rmk}

\begin{pro}\label{fromidealtocongruence}
Let $R$ be a hyperring and $I$ be a hyperideal of $R$. Then the relation $\equiv$ such that 
\[x \equiv y \Longleftrightarrow x+I=y+I\]
is a congruence relation and $R/I \simeq (R/\equiv)$.
\end{pro}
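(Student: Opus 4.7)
The plan is to observe that $\equiv$ is literally the equivalence relation \eqref{relationdef}, so reflexivity, symmetry, and transitivity are automatic (they follow from the fact that set equality is an equivalence relation). What actually requires checking is the compatibility clause \eqref{congeqn} of Definition \ref{congruencerealdef}. Once that is verified, the isomorphism $R/I\simeq R/\equiv$ will drop out of Proposition \ref{idealfromcongruence} after identifying the kernel of the canonical projection $\pi:R\to R/\equiv$ with $I$.

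For the multiplicative clause, suppose $x_1\equiv y_1$ and $x_2\equiv y_2$. By Lemma \ref{quotientequiclasslem} this is the same as $(x_i-y_i)\cap I\neq\emptyset$ for $i=1,2$, and the well-definedness of the multiplication $\odot$ on $R/I$ established in Proposition \ref{quotienthyperring} gives $x_1x_2+I=y_1y_2+I$, i.e.\ $x_1x_2\equiv y_1y_2$.

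For the additive clause I need $(x_1+x_2)\equiv(y_1+y_2)$ in the sense of \eqref{A=B}. The key observation is that, because the existential quantifiers in \eqref{A=B} decouple ($b'$ only has to match $a$, and $a'$ only has to match $b$), the relation $A\equiv B$ is equivalent to the plain set equality $\{[a]\mid a\in A\}=\{[b]\mid b\in B\}$. Applied to $A=x_1+x_2$ and $B=y_1+y_2$, these two sets are by construction $[x_1]\oplus[x_2]$ and $[y_1]\oplus[y_2]$, and their equality is precisely the well-definedness of $\oplus$ proved in Proposition \ref{quotienthyperring}. Thus the additive compatibility follows with no new computation.

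Finally, $\pi:R\to R/\equiv$ is a strict surjective hyperring homomorphism by Proposition \ref{contruencestrict}, and its kernel is $I$: indeed $\pi(x)=[0]$ iff $x+I=0+I$, which via Lemma \ref{quotientequiclasslem} (with $y=0$) is equivalent to $x\in I$. Proposition \ref{idealfromcongruence} now delivers $R/I\simeq R/\equiv$. The one conceptual step, and the only potential obstacle, is the additive clause; once one recognizes that \eqref{A=B} is simply set equality of coset collections, the whole proposition becomes a direct consequence of the quotient construction of \S\ref{Quotient of hyperrings}.
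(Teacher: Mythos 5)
Your proof is correct, but it takes a more structural route than the paper's. The paper verifies the two compatibility clauses of \eqref{congeqn} by hand: it picks $\alpha,\beta\in I$ with $x_1\in y_1+\alpha$, $x_2\in y_2+\beta$, expands $x_1x_2\in(y_1+\alpha)(y_2+\beta)$ and $x_1+x_2\subseteq(y_1+y_2)+(\alpha+\beta)$, and absorbs the error terms into $I$; it then identifies $\Ker\pi=I$ and invokes the first isomorphism theorem of \cite{Dav1}, exactly as you do at the end. Your argument instead observes that the relation \eqref{A=B} between subsets decouples (the witnesses $a'$ and $b'$ are chosen independently), so $A\equiv B$ is literally the equality $\{[a]\mid a\in A\}=\{[b]\mid b\in B\}$ of sets of cosets; this correctly reduces both compatibility clauses to the well-definedness of $\oplus$ and $\odot$ already established in Proposition \ref{quotienthyperring}, so that no new element-level computation is needed. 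This is a genuine economy, and it makes transparent why the proposition holds: it is the same observation the paper only makes \emph{after} the proof, in the remark showing $[a]\oplus[b]=[a]+[b]$. The only point worth spelling out is that Proposition \ref{quotienthyperring} proves well-definedness one argument at a time ($[a]=[a']$ gives $[a]\oplus[b]=[a']\oplus[b]$ and $[ab]=[a'b]$), so the two-variable statements you use require the standard two-step chaining via $[y_1]\oplus[x_2]$ and $[y_1x_2]$ together with commutativity; this is immediate but should be said. Your identification of $\Ker\pi$ with $I$ via Lemma \ref{quotientequiclasslem} at $y=0$ is also correct, since $x-0=\{x\}$.
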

\begin{proof}
Clearly $\equiv$ is an equivalence relation. If $x_1 \equiv y_1$ and $x_2 \equiv y_2$, we have 
\begin{equation}\label{congruproof}
x_i+I=y_i+I, \quad i=1,2. 
\end{equation}
Thus we can find $\alpha, \beta \in I$ such that $x_1 \in y_1+\alpha$, $x_2 \in y_2+\beta$. By multiplying these two, one obtains
\[x_1x_2 \in (y_1+\alpha)(y_2+\beta) \subseteq y_1y_2 +y_1\beta+y_2\alpha+\alpha\beta.\]
Therefore, for any $t \in I$, we have $x_1x_2+t \subseteq y_1y_2 +(y_1\beta+y_2\alpha+\alpha\beta +t)$. But since $\alpha,\beta,t \in I$, it follows that $(y_1\beta+y_2\alpha+\alpha\beta +t) \subseteq I$. Hence, 
$x_1x_2 +t \subseteq y_1y_2 +I$ and $x_1x_2 +I \subseteq y_1y_2+I$. Since the argument is symmetric, we have
\[x_1x_2+I=y_1y_2+I \Longleftrightarrow x_1x_2 \equiv y_1y_2.\]
For the other condition of a congruence relation, we
need to show $(x_1+x_2) \equiv (y_1+y_2)$. 
It is enough to show that $\forall$ $t \in x_1+x_2$, there exists $y \in y_1+y_2$ such that $t \equiv y$. We can take $\alpha, \beta \in I$ such that $x_1 \in y_1+\alpha$, $x_2 \in y_2 +\beta$ from \eqref{congruproof}. It follows that
\[t \in (x_1+x_2) \subseteq (y_1+y_2)+(\alpha+\beta).\]
Hence, $t \in y+\gamma$ for some $y \in y_1+y_2$, $\gamma \in \alpha +\beta \subseteq I$. This implies that $t \equiv y$ from \eqref{relationequiconditon} and the reversibility property of $R$. It is clear that in this case the kernel of a canonical projection map $\pi:R \longrightarrow R/\equiv$ is $I$. It follows from the first isomorphism theorem of hyperrings (cf. \cite[Proposition $2.11$]{Dav1}) that $R/I \simeq R/\equiv$ since $\pi$ is strict.
\end{proof}

\begin{rmk}
Let $R$ be a hyperring and $I$ be a hyperideal of $R$. In a quotient hyperring $R/I$, we defined the addition as \[[a]\oplus [b]=\{[c]|c\in a+b\}\] 
and we proved that $x\sim y \Longleftrightarrow x+I=y+I$ is a congruence relation. In this case, we defined the addition as 
\[[a]+[b]=\{[c]\mid c\in a'+b'\quad \forall [a']=[a],[b']=[b]\}.\] At first glance, $[a]\oplus [b]$ and $[a]+[b]$ seem different, but in fact they are the same sets. Clearly $[a]\oplus [b] \subset [a]+[b]$. Conversely, assume that $t' \in a'+b'$ for some $[a']=[a]$, $[b']=[b]$. Since $a'+I=a+I$, $b'+I=b+I$, we can find $\alpha,\beta \in I$ such that $a' \in a+\alpha,b' \in b+\beta$. This implies that $t' \in a'+b'\subseteq (a+b)+(\alpha+\beta)$. But since $(\alpha+\beta)\subseteq I$, it follows that $t' \in t+\gamma$ for some $t \in (a+b), \gamma \in I$. By the reversibility property of $R$, $\gamma \in t'-t$. In other words, $(t-t')\cap I \neq \emptyset$, hence $[t]=[t']$. This shows that $[a]+[b] \subseteq [a]\oplus [b]$.
\end{rmk}
\subsection{Analogues of classical lemmas}\label{lem}
In this subsection, we reformulate several basic results in commutative algebra in terms of hyperrings. Throughout this subsection, we denote by $R$ a hyperring and by $V(I)$ the set of of prime hyperideals of $R$ containing a hyperideal $I$. We also denote by $Nil(R)$ the intersection of all prime hyperideals of $R$.

\begin{lem}\label{radical}
Let $I \subseteq R$ be a hyperideal. Then the following set:
\[ \sqrt{I}:=\{ r\in R \mid \exists n \in \mathbb{N} \textrm{ such that  } r^n \in I\}\]
is a hyperideal.
\end{lem}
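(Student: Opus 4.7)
The plan is to verify the defining conditions of a hyperideal for $\sqrt I$: nonemptiness, and the closure $a - rb \subseteq \sqrt I$ for all $a, b \in \sqrt I$ and $r \in R$. Nonemptiness is immediate since $0 \in I$ gives $0 \in \sqrt I$. For the closure, fix $a, b \in \sqrt I$ with $a^n, b^m \in I$, fix $r \in R$, and take an arbitrary $c \in a - rb$. The task reduces to showing $c^{n+m-1} \in I$, whence $c \in \sqrt I$.

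The strategy mimics the classical binomial-expansion proof. Since hyperrings fail the doubly distributive law (Remark~\ref{double}), one cannot expand $(a - rb)^{n+m-1}$ in a single stroke, but the one-sided identity $u(y_1 + \cdots + y_N) = uy_1 + \cdots + uy_N$, an equality of sets for any $u \in R$, suffices inductively. I will show by induction on $k \geq 1$ that $c^k$ lies in some hypersum $x_1 + \cdots + x_N$ whose summands each have the form $\pm a^i (rb)^{k-i}$ with $0 \leq i \leq k$. The base $k = 1$ is just $c \in a + (-rb)$. For the step, write $c^k = c \cdot c^{k-1}$; since $c \in a + (-rb)$, distributivity yields $c^k \in a c^{k-1} + (-rb) c^{k-1}$. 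By the inductive hypothesis $c^{k-1} \in w_1 + \cdots + w_N$ with each $w_j$ of the required form, and multiplying through by $a$ (resp.\ $-rb$) places $a c^{k-1}$ (resp.\ $(-rb) c^{k-1}$) inside a hypersum of elements $\pm a^{i'}(rb)^{k-i'}$. Assembling the two pieces gives the claim for $c^k$.

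Specializing to $k = n+m-1$, each summand $\pm a^i (rb)^{k-i}$ must satisfy $i \geq n$ or $k - i \geq m$, since otherwise $i \leq n-1$ and $k - i \leq m-1$ would force $k \leq n+m-2$. In either case the summand factors as an element of $R$ times $a^n$ or $b^m$, and therefore lies in $I$, because hyperideals absorb multiplication by $R$ (apply the defining condition with first entry $0$). Finally, $I$ is closed under hypersums of its own elements: for $x, y \in I$ the condition with $r = -1$ gives $x + y = x - (-1)y \subseteq I$. Consequently the full hypersum containing $c^{n+m-1}$ lies in $I$, yielding $c^{n+m-1} \in I$. The main obstacle is the inductive expansion: one must first split $c$ into $a + (-rb)$ and only then expand $c^{k-1}$, so that every multiplication uses the single-element distributive identity and no doubly distributive law is needed; once this bookkeeping is in place, the argument collapses to the classical counting of binomial exponents.
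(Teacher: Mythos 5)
Your proof is correct and follows essentially the same route as the paper's: the crux in both is the weak binomial containment (an inclusion rather than an equality, since double distributivity fails), followed by the classical count showing every exponent pair has $i\geq n$ or $k-i\geq m$. The only difference is organizational — you verify the single defining condition $a-rb\subseteq\sqrt{I}$ directly and justify the containment by an explicit induction using one-sided distributivity, whereas the paper checks closure under $r\cdot(-)$, negation, and addition separately and simply asserts $(a+b)^{l}\subseteq\sum\binom{l}{k}a^{k}b^{l-k}$ without the inductive bookkeeping you supply.
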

\begin{proof}
Trivially we have $0 \in \sqrt{I}$. Suppose that $a \in \sqrt{I}$, then $a^n \in I$ for some $ n \in \mathbb{N}$. Since $I$ is a hyperideal, for $r \in R$,  we have $r^na^n=(ra)^n \in I$. It follows that $ra \in \sqrt{I}$. Clearly, $(-a)^n$ is either $a^n$ or $-a^n$. Since both $a^n$ and $-a^n$ are in $I$, it follows that $-a \in \sqrt{I}$. Finally, suppose that $a,b \in \sqrt{I}$ and $a^n,b^m \in I$. Then, for $l\geq (n+m)$, we have $(a+b)^l \subseteq \sum {l \choose k}a^kb^{l-k} \subseteq I$. This implies that $(a+b) \subseteq \sqrt{I}$; therefore, $\sqrt{I}$ is a hyperideal.
\end{proof}

\begin{rmk}\label{double}
In general, a hyperring does not satisfy the doubly distributive property (cf. \cite[pp $13-14$]{viro}). In other words, the following identity:
\[ (a+b)(c+d)=ac+ad+bc+bd\]
is in general not fulfilled. Instead, the following identity:
\[ (a+b)(c+d) \subseteq ac+ad+bc+bd\] 
holds.
\end{rmk}

\begin{lem}\label{nilradical}
Let $R$ be a hyperring and $I$ a hyperideal of $R$. Then
\[ \sqrt{I}=\bigcap_{\mathfrak{p}\in V(I)} \mathfrak{p}. \]
\end{lem}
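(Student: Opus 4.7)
The plan is to prove the two inclusions separately. The direction $\sqrt{I} \subseteq \bigcap_{\mathfrak{p} \in V(I)} \mathfrak{p}$ is routine: if $r \in \sqrt{I}$ then $r^n \in I \subseteq \mathfrak{p}$ for any $\mathfrak{p} \in V(I)$, and a short induction on $n$ using the defining property of a prime hyperideal yields $r \in \mathfrak{p}$.

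The reverse inclusion I would prove by contrapositive, exactly mimicking the classical ring-theoretic argument but routed through the localization of Proposition \ref{localhyperring}. Suppose $r \notin \sqrt{I}$, so $r^n \notin I$ for every $n \in \mathbb{N}$. Let $S := \{1, r, r^2, \ldots\}$, which is a multiplicative submonoid of $R$ disjoint from $I$. Form the localization $\iota : R \longrightarrow S^{-1}R$ and the hyperideal $S^{-1}I$ of $S^{-1}R$. The first substep is to verify that $S^{-1}I$ is a \emph{proper} hyperideal: if $\tfrac{1}{1} = \tfrac{i}{s}$ for some $i \in I$, $s \in S$, then by the defining equivalence relation \eqref{eqrel} there exists $x \in S$ with $xs = xi$; but $xs \in S$ (as $S$ is multiplicative) while $xi \in I$ (as $I$ is a hyperideal), contradicting $S \cap I = \emptyset$.

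Since $S^{-1}I$ is proper, the existence-of-maximal-hyperideals result cited just after Definition \ref{zarisktopdef} supplies a maximal hyperideal $\mathfrak{m} \supseteq S^{-1}I$ of $S^{-1}R$, and maximal hyperideals are prime. I would then set $\mathfrak{p} := \iota^{-1}(\mathfrak{m})$ and verify four properties: (a) $\mathfrak{p}$ is a hyperideal, which follows from the inclusion $\iota(a - sb) \subseteq \iota(a) - \iota(s)\iota(b) \subseteq \mathfrak{m}$ valid for hyperring homomorphisms and $a,b \in \mathfrak{p}$, $s \in R$; (b) $\mathfrak{p}$ is prime, since $\iota(xy) = \iota(x)\iota(y) \in \mathfrak{m}$ forces $\iota(x) \in \mathfrak{m}$ or $\iota(y) \in \mathfrak{m}$ by primality of $\mathfrak{m}$; (c) $I \subseteq \mathfrak{p}$, because $\iota(i) = \tfrac{i}{1} \in S^{-1}I \subseteq \mathfrak{m}$ for all $i \in I$; (d) $r \notin \mathfrak{p}$, because $\iota(r) = \tfrac{r}{1}$ has inverse $\tfrac{1}{r}$ in $S^{-1}R$ and hence cannot lie in any proper hyperideal. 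Together (c) and (d) give $\mathfrak{p} \in V(I)$ with $r \notin \mathfrak{p}$, so $r \notin \bigcap_{\mathfrak{p} \in V(I)} \mathfrak{p}$, completing the contrapositive.

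The step that needs the most care is the properness of $S^{-1}I$, since the equivalence relation in a hyperring localization still has the classical ``extra multiplier'' $x \in S$, and one must use both the multiplicative closure of $S$ and the hyperideal property of $I$ to reach a contradiction. The verification that $\iota^{-1}(\mathfrak{m})$ is a hyperideal is the only place where the multi-valued addition intervenes, but it is handled cleanly by the built-in inclusion $\varphi(a+b) \subseteq \varphi(a) + \varphi(b)$ in the definition of a hyperring homomorphism, so no additional tool beyond those already developed in \S 2 is required.
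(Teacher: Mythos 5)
Your proof is correct, but it takes a genuinely different route from the paper's. The paper argues directly in $R$: it applies Zorn's lemma to the family $\Sigma$ of hyperideals containing $I$ and disjoint from $S=\{1,f,f^2,\dots\}$, and then proves that a maximal element $\mathfrak{q}$ of $\Sigma$ is prime by hand, using the auxiliary hyperideal $\mathfrak{q}+xR$ and the weak distributivity inclusion $(a_1+xr_1)(a_2+yr_2)\subseteq a_1a_2+a_1yr_2+a_2xr_1+xyr_1r_2$. You instead localize at $S$, check that $S^{-1}I$ is proper (your argument via the relation \eqref{eqrel} is exactly right, and it also guarantees $0\neq 1$ in $S^{-1}R$, since $0\in I$ forces $0\notin S$), invoke the existence of a maximal (hence prime) hyperideal above $S^{-1}I$, and pull it back along $\iota$. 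All four verifications (a)--(d) go through as you describe; in particular the preimage of a prime hyperideal under a hyperring homomorphism is prime by the same one-line argument as for rings, and the multi-valued addition only enters through $\iota(a-sb)\subseteq\iota(a)-\iota(s)\iota(b)$, which you handle correctly. What each approach buys: the paper's version is self-contained and localizes the use of weak distributivity to one explicit computation, whereas yours outsources both the Zorn step (to the cited existence of maximal hyperideals) and the primality check (to the pullback lemma), at the cost of relying on the well-definedness of the localization construction of \cite{Rota} and Proposition \ref{localhyperring}. Since the paper already assumes that machinery, your route is a legitimate and arguably cleaner alternative; note only that both arguments ultimately rest on Zorn's lemma, just applied in different places.
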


\begin{proof}
Suppose that $a \in \sqrt{I}$, then $a^n \in I \subseteq \mathfrak{p}$ for all $\mathfrak{p} \in V(I)$. Since $\mathfrak{p}$ is a prime hyperideal, it follows that $a \in \mathfrak{p}$; hence, $\sqrt{I} \subseteq \mathfrak{p}$ for all $\mathfrak{p} \in V(I)$.\\ 
Conversely, suppose that $f \in \bigcap_{\mathfrak{p}\in V(I)} \mathfrak{p}$ and $f \not\in \sqrt{I}$. This implies that 
\[S:=\{1,f,f^2,....\} \cap I =\emptyset.\] 
Let $\Sigma$ be the set of hyperideals $J$ of $R$ such that $S \cap J = \emptyset$ and $I \subseteq J$. Then $\Sigma \neq \emptyset$ since we have $\sqrt{I} \in \Sigma$. By Zorn's lemma (ordered by inclusion), $\Sigma$ has a maximal element $\mathfrak{q}$. Then $\mathfrak{q}$ is a prime hyperideal. Indeed, by definition, $\mathfrak{q}$ is a hyperideal. Therefore, all we have to prove is that $\mathfrak{q}$ is prime. One can easily check, for $x \in R$, the following set:
\[ \mathfrak{q}+xR:=\bigcup \{a+b \mid a \in \mathfrak{q}, b \in xR \}\]
is a hyperideal. If $x, y \not\in \mathfrak{q}$ then $\mathfrak{q}$ is properly contained in $\mathfrak{q}+xR$ and $\mathfrak{q}+yR$. Thus, $\mathfrak{q}+xR$, $\mathfrak{q}+yR$ $\not\in \Sigma$ from the maximality of $\mathfrak{q}$ in $\Sigma$. It follows that $f^n \in \mathfrak{q}+xR$ and $f^m \in \mathfrak{q}+yR$ for some $n,m \in \mathbb{N}$. In other words, $f^n \in a_1+xr_1$, $f^m \in a_2 +yr_2$ for some $a_1,a_2 \in \mathfrak{q}$ and $r_1,r_2 \in R$. Therefore, we have
\[f^{n+m} \in (a_1+xr_1)(a_2+yr_2)\subseteq a_1a_2+a_1yr_2+a_2xr_1+xyr_1r_2 \subseteq \mathfrak{q}+xyR.\]
This implies that $xy \not \in \mathfrak{q}$ because if $xy \in \mathfrak{q}$ then $f^{n+m} \in \mathfrak{q}$, and we assumed that $f^l \not \in \mathfrak{q}$ for all $l \in \mathbb{N}$. It follows that $\mathfrak{q}$ is a prime hyperideal containing $I$ such that $S \cap \mathfrak{q} = \emptyset$. However, this is impossible since we took $f \in \bigcap_{\mathfrak{p} \in V(I)}\mathfrak{p}$. This completes the proof.
\end{proof}

\noindent For a family $\{X_\alpha\}_{\alpha \in J}$ of subsets $X_{\alpha}\subseteq R$, we denote by $<X_\alpha>_{\alpha \in J}$ the smallest hyperideal of $R$ containing $(\bigcup_{\alpha \in J}X_{\alpha})$. Note that $<X_\alpha>_{\alpha \in J}$ always exists since an intersection of hyperideals is a hyperideal as in the classical case. We call $<X_\alpha>_{\alpha \in J}$ the hyperideal generated by $\{X_\alpha\}_{\alpha \in J}$.

\begin{lem}\label{generatingideal}
Let $J$ be an index set.
\begin{enumerate}
\item
Let $h \in R$. Then the hyperideal generated by $h$ is 
\begin{equation}
hR:=\{hr \mid r \in R\}.
\end{equation}
\item
Suppose that $I_i$ is the principal hyperideal generated by an element $h_i \in R$ for each $i \in J$. Then
\begin{equation}\label{hypegen}
<I_i>_{i \in J}=\{r \in R \mid r \in \sum_{i \in X} b_ih_i, b_i \in R, X\subseteq J, |X|<\infty \}. 
\end{equation}
\item
Let $\{I_i\}_{i \in J}$ be a family of hyperideals $I_j \subseteq R$. Then 
\begin{equation}\label{hypergen2} 
<I_i>_{i\in J}=\{r \in R \mid r \in \sum_{i\in X} b_ih_i, b_i \in R,h_i \in I_i,X\subseteq J, |X|<\infty\}.  
\end{equation}
\end{enumerate}
\end{lem}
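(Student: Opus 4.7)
The plan is to verify, for each of the three parts, that the candidate set is both a hyperideal and the smallest hyperideal containing the indicated generators. Part (1) is a direct check: for $a = hr_1$, $b = hr_2 \in hR$ and $r \in R$, the single-sided distributive axiom $h(y+z) = hy + hz$ forces $a - rb = h(r_1 - rr_2) = \{hs : s \in r_1 - rr_2\} \subseteq hR$, and combined with $h = h \cdot 1 \in hR$ this identifies $hR$ as the smallest hyperideal containing $h$.

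For part (2), denote the right-hand side by $T$. One inclusion is immediate: each $h_i$ lies in $T$ (take $X = \{i\}$ and $b_i = 1$), and any hyperideal containing $\{h_i\}_{i \in J}$ is closed under multiplication by $R$ and finite hyper-addition, hence contains every expression $\sum_{i \in X} b_i h_i$; thus $T \subseteq \langle I_i \rangle_{i \in J}$. The substantive step is showing that $T$ itself is a hyperideal. Given $a \in \sum_{i \in X_1} b_i h_i$ and $b \in \sum_{j \in X_2} c_j h_j$ in $T$, and $r \in R$, I first use single-sided distributivity applied inductively to $r \cdot \sum_j c_j h_j$, together with the hypergroup identity $-(y+z) = (-y) + (-z)$, to place $-rb$ inside $\sum_{j \in X_2} (-rc_j) h_j$. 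Since $a$ and $-rb$ lie in their respective sums, $a - rb = a + (-rb)$ is contained in $\sum_{i \in X_1} b_i h_i + \sum_{j \in X_2} (-rc_j) h_j$. Regrouping by index using commutativity and associativity, and collapsing the contributions at each $k \in X_1 \cap X_2$ via $b_k h_k + (-rc_k) h_k = (b_k - rc_k) h_k$, every element of $a - rb$ has the form $\sum_{k \in X_1 \cup X_2} d_k h_k$ for some $d_k \in R$, witnessing its membership in $T$.

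For part (3) the argument is essentially identical. Since each $I_i$ is already a hyperideal, the scalar $b_i$ in $b_i h_i$ can be absorbed (as $b_i h_i \in I_i$), so the right-hand side of (3) coincides with the set of finite hyper-sums $\sum_{i \in X} g_i$ with $g_i \in I_i$; this set is a hyperideal by the same regrouping argument, contains each $I_i$ via singleton sums, and is clearly contained in any hyperideal containing all the $I_i$. Alternatively, (3) can be deduced from (2) by viewing $\langle I_i \rangle_{i \in J}$ as the hyperideal generated by the family of principal hyperideals $\{hR : h \in I_i, \; i \in J\}$.

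The main obstacle throughout will be avoiding any inadvertent use of the doubly distributive identity $(a+b)(c+d) = ac + ad + bc + bd$, which, as Remark \ref{double} explicitly notes, fails in a Krasner hyperring. Every regrouping step in (2) and (3) — particularly the combining of overlapping-index terms $b_k h_k + (-rc_k) h_k = (b_k - rc_k) h_k$ and the inductive expansion of $r \cdot \sum_j c_j h_j$ — must rely only on the single-sided distributive law $x(y+z) = xy + xz$, commutativity and associativity of the hyper-addition, and the identity $-(y+z) = (-y) + (-z)$; I must be vigilant not to slip into the stronger, invalid identity when handling sums of products.
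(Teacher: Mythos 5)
Your argument is correct and is exactly the "classical" argument that the paper invokes without writing out (its proof reads only "The proof is similar to the classical case"); you supply the details faithfully, and your vigilance about using only the single distributive law $x(y+z)=xy+xz$ (which holds with equality in a Krasner hyperring) together with $-(y+z)=(-y)+(-z)$ for the regrouping steps is precisely the point where care is needed. No gaps.
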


\begin{proof}
The proof is similar to the classical case.
\end{proof}

\noindent Let $R^{\times}:=\{r \in  R \mid rr'=1 \textrm{ for some } r' \in R\}$ and $J(R)$ be the intersection of all maximal hyperideals of $R$. The following lemma has been proven in \cite{Dav1}.

\begin{lem}\label{jacobsonradical}(\cite[Proposition $2.12$, $2.13$, $2.14$]{Dav1})
\begin{enumerate}
\item
$x \in J(R) \Longleftrightarrow(1-xy) \subseteq R^{\times} \quad \forall y \in R.$
\item
For any hyperideal $I\subsetneq R$, we have $V(I) \neq \emptyset$.
\end{enumerate}
\end{lem}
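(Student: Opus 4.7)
The plan is to handle the two items separately. Part (2) is essentially an immediate consequence of results already collected in the excerpt: by the proposition cited just after the definition of prime/maximal hyperideals (\cite[Proposition 2.12]{Dav1}), any proper hyperideal $I$ of $R$ is contained in some maximal hyperideal $\mathfrak{m}$, and every maximal hyperideal is prime. Thus $\mathfrak{m} \in V(I)$, so $V(I) \neq \emptyset$. No real work is required beyond invoking these two facts.

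For part (1), the strategy is to mimic the classical commutative-ring argument, being careful that $1-xy$ is now a \emph{set} rather than a single element and that the reversibility axiom has to replace ordinary subtraction throughout.

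For the forward direction, I would argue by contradiction: assume $x \in J(R)$ but some element $z \in 1 - xy$ fails to be a unit. Then the principal hyperideal $zR$ (identified explicitly in Lemma \ref{generatingideal}(1)) is a proper hyperideal of $R$, hence by part (2) is contained in some maximal hyperideal $\mathfrak{m}$. Since $x \in J(R) \subseteq \mathfrak{m}$, we also have $xy \in \mathfrak{m}$. From $z \in 1 + (-xy)$, reversibility yields $1 \in z + xy$, and therefore $1 \in \mathfrak{m} + \mathfrak{m} \subseteq \mathfrak{m}$, contradicting the properness of $\mathfrak{m}$.

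For the reverse direction, suppose $(1-xy) \subseteq R^{\times}$ for every $y \in R$, and assume for contradiction that $x \notin \mathfrak{m}$ for some maximal hyperideal $\mathfrak{m}$. Using the construction of Lemma \ref{generatingideal}, the hyperideal $\mathfrak{m} + xR$ properly contains $\mathfrak{m}$, hence equals $R$ by maximality. Therefore $1 \in a + xr$ for some $a \in \mathfrak{m}$ and $r \in R$, and reversibility gives $a \in 1 - xr$. By the standing hypothesis, every element of $1 - xr$ is a unit, so $a \in R^{\times}$; but this is incompatible with $a \in \mathfrak{m}$. I do not expect a real obstacle: the only subtlety is careful bookkeeping with the set-valued addition, and the systematic replacement of ``move a term to the other side of the equation'' by reversibility $z \in b + c \Longleftrightarrow c \in z - b$.
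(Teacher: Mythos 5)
Your proof is correct. Note that the paper does not actually prove this lemma at all---it simply cites \cite[Propositions 2.12--2.14]{Dav1}---so there is nothing internal to compare against; your argument is the standard one and fills in what the paper leaves to the reference. Part (2) is, as you say, immediate from the existence of maximal hyperideals above any proper hyperideal together with the fact that maximal implies prime. In part (1) the two points that need (and get) justification are exactly the right ones: that $zR$ is proper precisely when $z$ is not a unit (clear from Lemma \ref{generatingideal}(1), since $1\in zR$ forces $z\in R^{\times}$), and that $\mathfrak{m}+xR$ is a hyperideal properly containing $\mathfrak{m}$ --- a construction the paper itself uses and asserts in the proof of Lemma \ref{nilradical}, so you are on safe ground. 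The reversibility manipulations ($z\in 1-xy\Longleftrightarrow 1\in z+xy$, and $1\in a+xr\Longleftrightarrow a\in 1-xr$) are applied correctly, and the closure $\mathfrak{m}+\mathfrak{m}\subseteq\mathfrak{m}$ follows from the hyperideal axiom with $r=-1$. No gaps.
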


\noindent One imposes the Zariski topology on the set $\Spec R$ of prime hyperideals of $R$ as in the classical case (cf. Definition \ref{zariskitop}, Proposition \ref{specRcomputation}). In what follows, we consider $X=\Spec R$ as a topological space equipped with the Zariski topology. Then, as in classical algebraic geometry, we have the following.

\begin{pro}\label{discon}
$X=\Spec R$ is a disconnected topological space if and only if $R$ has a (multiplicative) idempotent element different from $0,1$.
\end{pro}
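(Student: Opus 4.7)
The plan is to mimic the classical proof for commutative rings, navigating two features of the hyperring setting: hypersums are in general sets rather than elements, and the doubly distributive property fails (Remark \ref{double}). For the sufficiency ($\Leftarrow$), given a nontrivial idempotent $e \in R$, the first step is to extract a ``complementary'' element $f$ with $ef = 0$. The key computation is that, although $e\cdot(1-e)$ is the set $\{ew : w \in 1-e\}$, single distributivity as an equality gives
\[ e\cdot(1-e) \;=\; e\cdot 1 \;+\; e\cdot(-e) \;=\; e \;+\; (-e^2) \;=\; e \;+\; (-e), \]
using $e^2 = e$. Because $0 \in e + (-e)$ by the additive inverse axiom, $0$ actually lies in $e\cdot(1-e)$, so some $f \in 1-e$ satisfies $ef = 0$. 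Then for any prime hyperideal $\mathfrak{p}$, $ef = 0 \in \mathfrak{p}$ forces $e \in \mathfrak{p}$ or $f \in \mathfrak{p}$, giving $\Spec R = V(eR) \cup V(fR)$; the identity $1 \in e+f$ rules out any prime containing both, so $V(eR) \cap V(fR) = \emptyset$. Finally $e \notin R^\times$ (else $e^2 = e$ gives $e = 1$) and $f \notin R^\times$ (else $ef = 0$ gives $e = 0$), so Lemma \ref{jacobsonradical}(2) makes both sets nonempty, producing a nontrivial clopen decomposition.

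For the necessity ($\Rightarrow$), suppose $\Spec R = V(I) \sqcup V(J)$ with both pieces nonempty. Proposition \ref{specRcomputation} together with Lemma \ref{jacobsonradical}(2) gives $\langle I \cup J \rangle = R$, whence Lemma \ref{generatingideal}(3) produces $a \in I$ and $b \in J$ with $1 \in a+b$. Similarly, $V(I\cap J) = V(I) \cup V(J) = \Spec R$ and Lemma \ref{nilradical} force $I \cap J \subseteq Nil(R)$, so $ab \in I \cap J$ is nilpotent: $(ab)^N = 0$ for some $N \geq 1$. The plan is now the classical trick adapted to this setting: from $1 \in (a+b)^{2N-1}$, isolate an $a$-heavy component $u$ and a $b$-heavy component $v$ with $1 \in u+v$ and $uv = 0$, after which distributivity yields
\[ u \;=\; u\cdot 1 \;\in\; u(u+v) \;=\; u^2 + uv \;=\; u^2 + 0 \;=\; \{u^2\}, \]
so $u = u^2$ is idempotent.

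The main technical obstacle is that, without double distributivity, the binomial theorem collapses to an \emph{inclusion} $(a+b)^k \subseteq W_k := \sum_{i=0}^{k} \binom{k}{i}\, a^i b^{k-i}$ (the right-hand side being a hypersum of $2^k$ elements, each $a^i b^{k-i}$ appearing with multiplicity $\binom{k}{i}$), not an equality; however, a short induction on $k$ using only single distributivity establishes this inclusion, and that is all one needs. Taking $k = 2N-1$ one gets $1 \in W_{2N-1}$, and splitting $W_{2N-1} = P_U + P_V$ with $P_U$ collecting all $i \geq N$ terms and $P_V$ the rest, one finds $u \in P_U$ and $v \in P_V$ with $1 \in u+v$. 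Since hyperideals are closed under hypersums and $R$-multiplication, $u \in a^N R \subseteq I$ and $v \in b^N R \subseteq J$; writing $u = a^N r$ and $v = b^N s$ one computes $uv = (ab)^N rs = 0$, completing the idempotent construction via the displayed computation above. Finally $u \in I \neq R$ rules out $u = 1$, while $u = 0$ would force $v = 1 \in J$, contradicting $V(J) \neq \emptyset$; hence $u$ is the required nontrivial idempotent.
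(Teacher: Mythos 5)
Your proof is correct. The backward direction (nontrivial idempotent $\Rightarrow$ disconnected) is essentially the paper's argument: both extract $f$ with $ef=0$ from $0\in e(e-1)$ via single distributivity, cover $X$ by $V(e)\cup V(f)$, use $1\in e+f$ to force disjointness, and invoke Lemma \ref{jacobsonradical}(2) for nonemptiness (you spell out $e,f\notin R^\times$, which the paper leaves implicit). The forward direction genuinely diverges at the key step. After both arguments reach $1\in a+b$ with $a\in I$, $b\in J$ and $(ab)^N=0$ (you get $ab\in Nil(R)$ more directly via $ab\in I\cap J\subseteq\sqrt{I\cap J}=Nil(R)$, whereas the paper detours through $V(a)\cup V(b)=X$), the paper expands $(a+b)^n$, collects the middle terms into an element $abf\in Nil(R)\subseteq J(R)$, and uses the Jacobson-radical characterization of units (Lemma \ref{jacobsonradical}(1)) to find a unit $\alpha\in a^n+b^n$, setting $e=a^n\alpha^{-1}$ and verifying $0\in e(e-1)$ by hand. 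You instead take the exponent $2N-1$ so that every term of the binomial hypersum is divisible by $a^N$ or by $b^N$, split the hypersum into $P_U+P_V$, and obtain $u,v$ with $1\in u+v$, $uv=0$ directly; then $u\in u(u+v)=u^2+uv=\{u^2\}$ gives idempotency in one line, and your nontriviality check ($u=1\Rightarrow I=R$, $u=0\Rightarrow 1\in J$) avoids the paper's separate claim that $a,b\notin Nil(R)$. Your route buys a shorter, more self-contained argument that never needs $J(R)$ or a unit; its only extra cost is the careful statement of the binomial \emph{inclusion} $(a+b)^k\subseteq W_k$ under single distributivity, which you correctly flag and which the paper also uses (without comment) in the form $(a+b)^n\subseteq\sum d_ka^kb^{n-k}$.
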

\begin{proof}
Suppose that $e\neq 0,1$ is an idempotent element of $R$. Then we have $e^2=e$, and it follows that $0 \in e(e-1)$. Therefore there is an element $f \in e-1$ such that $ef=0$. Moreover, $f\neq 0$ since $e \neq 1$. Similarly, $f$ can not be $1$ since $ef=0$ and $e \neq 0$. Together with Lemma \ref{jacobsonradical}, it follows that $V(e)$ and $V(f)$ are nonempty subsets of $X$. Hence we have $X=\Spec R=V(e) \cup V(f)$ since $ef=0$. \\
Now we claim that $V(e) \cap V(f) =\emptyset$. Indeed, if $\mathfrak{p} \in V(e) \cap V(f)$, then $e,f \in \mathfrak{p}$. This implies that $-e,-f \in \mathfrak{p}$ and $(f-e) \subseteq \mathfrak{p}$. However, we have \[
f \in e-1=-1+e \Longleftrightarrow -1 \in f-e.
\] 
Thus we should have $1 \in \mathfrak{p}$ and it is impossible. It follows that $\{V(e)^c,V(f)^c\}$ becomes the disjoint open cover of $X$ and hence $X$ is disconnected.\\
Conversely, suppose that $X=\Spec R=U_1 \cup U_2$, where $U_1$ and $U_2$ are disjoint open subsets of $X$. In particular, $U_1$ and $U_2$ are also closed. We may assume the following (cf. Proposition \ref{specRcomputation}): for some hyperideals $I$ and $J$,
\[ X=\Spec R=V(I) \cup V(J)=V(IJ), \quad V(I) \cap V(J)= V(<I,J>)=\emptyset.\]
Recall that $Nil(R):=\bigcap_{\mathfrak{p} \in X}\mathfrak{p}$. It follows from Lemma \ref{radical} and \ref{nilradical} that $Nil(R)$ is the set of all nilpotent elements of $R$. Since $V(IJ)=X=V(Nil(R))$, we have $\sqrt{IJ}=Nil(R)$ from Lemma \ref{nilradical}. Moreover, the fact $V(<I,J>)=\emptyset$ implies that $\sqrt{<I,J>}$ contains $1$. Otherwise, $\sqrt{<I,J>}$ does not contain any unit element, and $V(<I,J>)\neq \emptyset$ from Lemma \ref{jacobsonradical}. It follows that $1 \in \sqrt{<I,J>}$, hence $1 \in <I,J>$. From Lemma \ref{generatingideal}, there exist $a \in I$ and $b \in J$ such that $1 \in a+b$.\\ 
We claim that $a, b \not \in Nil(R)$. Indeed, suppose that $a \in Nil(R)$. Then $a \in \mathfrak{p}$ for any $\mathfrak{p} \in X$. This implies that for $\mathfrak{p} \in V(J)$, $\mathfrak{p}$ contains both $a$ and $b$ and hence $1 \in (a+b)\subseteq \mathfrak{p}$.\\
Next, since 
\[V(a) \supseteq V(I) \Longleftrightarrow D(a) \subseteq (V(I))^c=V(J),\] 
we have 
\[D(a) \subseteq V(J),\quad D(b) \subseteq V(I).\] 
This implies 
\[(V(a) \cup V(b))^c=D(a) \cap D(b) \subseteq V(I) \cap V(J) =\emptyset,\] 
thus $V(a) \cup V(b) =X$.
Suppose that $A=<a>$ and $B=<b>$. Then $ab \in A \cap B$, and it follows that $V(a) \cup V(b) = V(A) \cup V(B)=V(AB)$. Thus we have $AB \subseteq  \sqrt{AB} =Nil(R)$. Therefore, $ab \in Nil(R)$, in turn, $(ab)^n=a^nb^n=0$ for some $n \in \mathbb{N}$. However, $a^n, b^n \neq 0$ since $a, b \not \in Nil(R)$. We observe the following:
\begin{equation}\label{proofusage}
1 \in a+b \Longrightarrow 1 \in (a+b)^n \subseteq \sum d_k a^k b^{n-k} \Longrightarrow 1 \in (a^n+b^n)+abf, \textrm{ for some }f \in R.
\end{equation}
Since $ab \in Nil(R)$, clearly $abf \in Nil(R) \subseteq J(R)$. It follows from \eqref{proofusage} that
\[ 1 \in \alpha + abf, \quad \textrm{ for some } \alpha \in a^n+b^n.\]
This implies that $\alpha \in 1-abf$. But since $abf \in J(R)$, from Lemma \ref{jacobsonradical}, $\alpha$ is a unit. Let $\beta=\alpha^{-1}$. Then
\[ \alpha \in a^n+b^n \Longrightarrow \alpha\beta=1 \in (a^n+b^n)\beta=a^n\beta+b^n\beta \Longrightarrow b^n\beta \in 1-a^n\beta.\]
One observes that $a^n\beta, b^n\beta \neq 0$ since $a^n,b^n \neq 0$ and $\beta$ is a unit. Furthermore, $a^n\beta, b^n\beta \neq 1$. Since $a^n\beta=1 \Longleftrightarrow a^n=\beta^{-1}=\alpha$, it would imply that $a^n=\alpha \in I$. Therefore $V(I)=\emptyset$. But we assumed that $V(I) \neq \emptyset$.\\ 
Finally let us define an element $e=a^n\beta$. Then we know, from the above, $e \neq 0,1$. Furthermore, we have
\[e^2-e=e(e-1)=a^n\beta(a^n\beta-1).\]
Since we have  $b^n\beta \in 1-a^n\beta$ and $b^n\beta a^n\beta=a^nb^n \beta^2=0$, it follows that 
\[0 \in e(e-1)=e^2-e.\] 
Hence, from the uniqueness of an inverse, we have $e^2=e$ and $e\neq 0, 1$.
\end{proof}

\begin{pro}\label{irreducible}
$X=\Spec R$ is irreducible if and only if $Nil(R)$ is a prime hyperideal.
\end{pro}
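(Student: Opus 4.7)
The plan is to mimic the classical argument that $\Spec$ of a commutative ring is irreducible if and only if its nilradical is prime, using the hyperring identities already collected in Proposition~\ref{specRcomputation} and the radical description $Nil(R)=\sqrt{\{0\}}=\bigcap_{\mathfrak{p}\in X}\mathfrak{p}$ provided by Lemma~\ref{nilradical}. Since primes in a hyperring still satisfy $ab\in\mathfrak{p}\Rightarrow a\in\mathfrak{p}\text{ or }b\in\mathfrak{p}$, all the standard manipulations with $V(-)$ survive the passage from rings to hyperrings.

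For the forward direction, I would first note $Nil(R)\neq R$ because $1\neq 0$. Assuming $X$ irreducible, I take $ab\in Nil(R)$ and use Lemma~\ref{nilradical} (with $I=\{0\}$) to conclude $ab$ lies in every prime hyperideal, i.e.\ $V(ab)=X$. Writing $V(a):=V(aR)$ for the principal hyperideal from Lemma~\ref{generatingideal}, primality of each $\mathfrak{p}\in X$ gives the set identity $V(ab)=V(a)\cup V(b)$. Hence $X=V(a)\cup V(b)$, and irreducibility forces $V(a)=X$ or $V(b)=X$; by Lemma~\ref{nilradical} again this means $a\in Nil(R)$ or $b\in Nil(R)$.

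For the converse, suppose $Nil(R)$ is a prime hyperideal and write $X=V(I_1)\cup V(I_2)$ with $I_1,I_2$ hyperideals of $R$. By Proposition~\ref{specRcomputation}(2) this union equals $V(I_1\cap I_2)$, so every prime contains $I_1\cap I_2$, giving $I_1\cap I_2\subseteq\bigcap_{\mathfrak{p}\in X}\mathfrak{p}=Nil(R)$. For any $i\in I_1$ and $j\in I_2$ the product $ij$ lies in $I_1\cap I_2\subseteq Nil(R)$, and primality of $Nil(R)$ forces $i\in Nil(R)$ or $j\in Nil(R)$. If some $i_0\in I_1\setminus Nil(R)$ exists, then every $j\in I_2$ must lie in $Nil(R)$, so $I_2\subseteq Nil(R)$ and $V(I_2)\supseteq V(Nil(R))=X$; otherwise $I_1\subseteq Nil(R)$ and $V(I_1)=X$. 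Either way one of the two closed pieces equals all of $X$. Nonemptiness of $X$ is automatic since the prime $Nil(R)$ is itself a point of $X$.

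I do not anticipate a genuine obstacle here: no hyper-specific pathology intervenes because the argument never expands a product of hypersums, so the failure of the doubly distributive property (Remark~\ref{double}) is irrelevant. The only small care required is to invoke Proposition~\ref{specRcomputation} for $V(I)\cup V(J)=V(I\cap J)$ and Lemma~\ref{nilradical} to pass between ``contained in every prime'' and ``nilpotent,'' both of which were proved precisely to make classical-style reasoning available in the hyperring setting.
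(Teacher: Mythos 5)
Your proof is correct and is exactly the classical argument that the paper invokes when it says ``the proof is similar to the classical case'': both directions rest on $V(ab)=V(a)\cup V(b)$, on $V(I_1)\cup V(I_2)=V(I_1\cap I_2)$ from Proposition~\ref{specRcomputation}, and on $Nil(R)=\bigcap_{\mathfrak{p}\in X}\mathfrak{p}$, all of which carry over to hyperrings since no distributivity of products over hypersums is ever needed. No gaps.
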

\begin{proof}
The proof is similar to the classical case.
\end{proof}

\section{Geometric aspects of Hyperrings}
In this section, we investigate geometric aspects of hyperrings. Our goal is to reformulate tropical varieties in the framework of hyperfields and to realize a hyperring $R$ (without zero-divisors) as the hyperring of global regular functions on the topological space $X=\Spec R$. 
\begin{rmk}
It is noteworthy that finite hyperfield extensions of the Krasner hyperfield $\mathbf{K}$ already have very interesting geometric aspect, namely a correspondence to incidence geometry. More precisely, in \cite{con3}, Connes and Consani proved that for a finite hyperfield extension $R$ of the Krasner hyperfield $\mathbf{K}$, one can associate an incidence geometry (with some extra conditions) and vice versa. Furthermore, under their construction, the open problem of completely classifying finite hyperfield extensions of $\mathbf{K}$ is very closely related with a long-standing conjecture on the existence of a non-Desarguesian finite projective plane with a simply transitive group of collineations. For more details, see \cite{con3}, \cite{thas2014hyperfield}, \cite{thas2008finite}. Also, for more connections between incidence geometry and hypergroups, we refer the reader to \cite{corsini2003applications}.
\end{rmk} 
\subsection{A tropical variety as an algebraic set over a hyperfield}
In this subsection, we recast a tropical variety as the `positive part' of an algebraic set over a hyperfield. For more details about tropical geometry we refer the reader to \cite{bernd}. Note that our notion of a tropical variety in this subsection (cf. Equation \eqref{tropicalhypersurface}, Remarks \ref{tropdifferconventional}) a bit differs from the one given in \cite{bernd}. However, such choice makes no difference in further study. Also, strictly speaking, a tropical variety is the support of a polyhedral complex with a balancing condition, but we only consider a tropical variety as a set without a balancing condition.\\
The main motivation for the study proposed in this section comes from the following observation: the definition of a tropical variety does not seem natural in the sense that it is not defined as the set of solutions of polynomial equations, but as the set of points where a maximum (or a minimum depending on the conventions) is attained at least twice. Recently, there have been several attempts to build an algebraic foundation of tropical geometry: for instance, \cite{noah}, \cite{iza}, \cite{oliver1}, \cite{Payne}, \cite{viro}. We take up a hyperstructural approach and, in Proposition \ref{tropicalvareityispositivepart}, we show that there exists a more natural description of a tropical variety by implementing a symmetrization procedure and an associated algebraic set over a hyperfield.\\ 

In this subsection, we use the multi-index notation: for $I=(i_1,...,i_n) \in \mathbb{N}^n$, $X^I:=x_1^{i_1}\cdot\cdot\cdot x_n^{i_n}$. Let us first define the notion of a polynomial equation with coefficients in a hyperring $R$.
\begin{mydef}\label{hyperpoly}
Let $R$ be a hyperring. 
\begin{enumerate}
\item
By a monomial $f$ with $n$ variables over $R$ we mean a single term:
\begin{equation}\label{monomial}
f:=a_IX^I, \qquad a_I \in R, \quad I \in \mathbb{N}^n.
\end{equation}
\item
By a polynomial $f$ with $n$ variables over $R$ we mean a finite formal sum:
\begin{equation}\label{hyppolydef}
f:=\sum_{I \in \mathbb{N}^n} a_I X^I, \qquad a_I \in R,\quad  a_I=0 \textrm{ for all but finitely many $I$}
\end{equation}
such that there is no repetition of monomials with the same multi-index $I$. We denote by $R[x_1,...,x_n]$ the set of polynomials with $n$ variables over $R$.
\end{enumerate}
\end{mydef} 
\noindent One can be easily misled in the above definition. For example, $(x-x)$ is not a polynomial over the hyperfield of signs $\mathbf{S}$ (Example \ref{sign}) since the term $x$ is repeated. The reason why we do not want $(x-x)$ to be a polynomial is that whenever a repetition of a monomial occurs, an ambiguity follows. For instance, we may have $(x-x)=(1-1)x=\{-x,0,x\}$. In other words, $(x-x)$ does not represent a single element. Furthermore, one can not perform the basic arithmetic in general. For example, $(x^2-1)$ differs from $(x+1)(x-1)$ as an element of $\mathbf{S}[x]$ (cf. Example \ref{multivaluedmult}). Note that $(x+1)(x-1)$ is not even a polynomial over $\mathbf{S}$ since it is not of the form \eqref{hyppolydef} Indeed it represents several different polynomials as you can see in Example \ref{multivaluedmult}. Therefore, for $f,g \in R[x_1,...,x_n]$, we say $f=g$ only if they are entirely identical.\\

\noindent We directly generalize the classical addition and multiplication of polynomial equations to $R[x_1,...,x_n]$. For example, for $f=\sum_{i=0}^na_ix^i$, $g=\sum_{j=0}^mb_jx^j \in R[x]$, the addition and the multiplication of $f$ and $g$ are given by
\begin{equation}\label{additionandmultiplicationofpoly}
f+g:=\sum_{i=0}^n(a_i+b_i)x^i+\sum_{i=n+1}^mb_ix^i,\qquad fg:=\sum_{i=0}^{n+m}(\sum_{r+l=i}a_rb_l)x^i. \quad (n\leq m)
\end{equation}
Notice that $(a_i+b_i)$ and $\sum_{r+l=i}a_rb_l$ are not elements but subsets of $R$ in general. Therefore, the addition and the multiplication defined in this way are in general multi-valued as the following example shows.

\begin{myeg} \label{multivaluedmult}
Let $R=\mathbf{S}$, the hyperfield of signs. For $x-1, x+1 \in \mathbf{S}[x]$, we have
\[(x+1)(x-1)=x^2+(1-1)x-1=\{x^2-1,x^2+x-1,x^2-x-1\}.\]
\[(x+1)+(x-1)=(x+x)+(1-1)=\{x,x+1,x-1\}.\]
\end{myeg}

\begin{rmk}\label{plyisset}
We emphasize that $R[x_1,...,x_n]$ is only a set with two multi-valued binary operations. However it appears, in some circumstance, that $R[x_1,...,x_n]$ behaves like a hyperring as Example \ref{multiringpoly} shows.
\end{rmk}

\noindent Throughout this section, we will simply write a polynomial over $R$ instead of a polynomial with $n$ variables when there is no possible confusion.

\begin{mydef}\label{evaluation}
Let $R$ be a hyperring and $R[x_1,...,x_n]$ be the set of polynomials over $R$. Let $L$ be a hyperring extension of $R$. By an evaluation $f(\alpha)$ of $f=\sum_I a_I X^I \in R[x_1,...,x_n]$ at $\alpha=(\alpha_1,...,\alpha_n) \in L^n$ we mean the following set:
\begin{equation}\label{evaluationequation}
f(\alpha):=\sum_I a_I\alpha^I \subseteq L.
\end{equation}
\end{mydef}

\begin{myeg}
Let $R=L=\mathbf{S}$, the hyperfield of signs. Suppose that  $f=x^2-x \in R[x]$. Then: $f(1)=\mathbf{S}, \quad f(0)=\{0\}, \quad f(-1)=\{1\}.$\\
Let $L=\mathcal{T}\mathbb{R}$, Viro's hyperfield (Example \ref{viro}). Then: $f(1)=[-1,1],\quad f(0)=\{0\}, \quad f(-1)=\{1\}.$
\end{myeg}

\noindent An intuitive definition of a set of solutions of $f \in R[x_1,...,x_n]$ over a hyperring extension $L$ of $R$ would be the following set:
\begin{equation}\label{wrongdef}
\{\alpha=(\alpha_1,...,\alpha_n) \in L^n \mid 0 \in f(\alpha)\}.
\end{equation}
However, \eqref{wrongdef} may depend on the way one writes $f$ (cf. \cite[\S $5.2$]{viro2}). Moreover, for two different elements $f,g \in R[x_1,...,x_n]$, we may have $f(\alpha)=g(\alpha)$ $\forall \alpha \in L^n$. For example, suppose that $f=x^2-1$, $g=x^4-1 \in \mathbf{S}[x]$. Then $f(a)=g(a)$ $\forall a \in \mathcal{T}\mathbb{R}$, but $f\neq g$ as elements of $\mathbf{S}[x]$. To resolve these issues, we introduce the following relation on $R[x_1,...,x_n]$.

\begin{mydef}
Let $R$ be a hyperring and $R[x_1,...,x_n]$ be the set of polynomials over $R$. Let $L$ be a hyperring extension of $R$. For $f,g \in R[x_1,...,x_n]$, we define 
\begin{equation}\label{equivofpoly}
f \equiv_L g \Longleftrightarrow f(\alpha)=g(\alpha) \textrm{ (as sets) } \quad \forall \alpha \in L^n.
\end{equation}
\end{mydef}

\begin{rmk}
The relation \eqref{equivofpoly} depends on a hyperring extension $L$ of $R$. However, we note that if $H$ is a hyperring extension of $L$ then $f\equiv_H g \Longrightarrow f\equiv_L g.$
\end{rmk}

\noindent The following statement is clear in view of the above definition.

\begin{pro}
Let $R$ be a hyperring and $L$ be a hyperring extension of $R$. Then the relation \eqref{equivofpoly} on $R[x_1,...,x_n]$ is an equivalence relation.
\end{pro}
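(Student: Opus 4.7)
The plan is to reduce the statement to the fact that equality of sets is itself an equivalence relation on the power set $\mathcal{P}(L)$. Since $f \equiv_L g$ is defined as the \emph{pointwise} equality $f(\alpha) = g(\alpha)$ of evaluation sets, for every $\alpha \in L^n$, it should inherit reflexivity, symmetry, and transitivity directly from the corresponding properties of set equality.

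More concretely, first I would verify reflexivity: for any $f \in R[x_1,\dots,x_n]$ and any $\alpha \in L^n$, we trivially have $f(\alpha) = f(\alpha)$ as subsets of $L$, so $f \equiv_L f$. Next, for symmetry, if $f \equiv_L g$ then $f(\alpha) = g(\alpha)$ for all $\alpha \in L^n$; reading this equality of subsets in reverse yields $g(\alpha) = f(\alpha)$, hence $g \equiv_L f$. Finally, for transitivity, suppose $f \equiv_L g$ and $g \equiv_L h$; then for each $\alpha \in L^n$ we have $f(\alpha) = g(\alpha)$ and $g(\alpha) = h(\alpha)$, and chaining these equalities gives $f(\alpha) = h(\alpha)$ for all $\alpha$, i.e.\ $f \equiv_L h$.

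There is essentially no obstacle here: the only thing to check is that the evaluation $f(\alpha) \subseteq L$ is a well-defined subset of $L$ (as opposed to some more delicate object depending on how $f$ is written), but this is already guaranteed by Definition~\ref{evaluation}, where $f(\alpha) = \sum_I a_I \alpha^I$ is a specified subset of $L$ once the formal polynomial $f = \sum_I a_I X^I$ is fixed (recall that polynomials in $R[x_1,\dots,x_n]$ are required by Definition~\ref{hyperpoly} to have no repeated monomials, so the expression $\sum_I a_I \alpha^I$ is unambiguous). Consequently the proof reduces to a one-line observation and can be written in just a few sentences.
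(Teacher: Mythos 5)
Your proof is correct and follows the same route as the paper, which simply observes that $\equiv_L$ is defined via equality of sets and hence inherits reflexivity, symmetry, and transitivity. Your extra remark that $f(\alpha)$ is a well-defined subset of $L$ once the formal polynomial is fixed is a reasonable (and harmless) addition.
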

\begin{proof}
This is straightforward since \eqref{equivofpoly} is defined in terms of an equality of sets.
\end{proof}

\noindent Under the equivalence relation \eqref{equivofpoly}, we can consider each polynomial $f \in R[x_1,...,x_n]$ as the following function: 
\begin{equation}\label{polyfunction}
f:L^n \longrightarrow P^*(L), \qquad \alpha=(\alpha_1,...,\alpha_n) \mapsto f(\alpha),
\end{equation}
where $P^*(L)$ is the set of nonempty subsets of $L$. 

\begin{mydef}
Let $R$ be a hyperring and $R[x_1,...,x_n]$ be the set of polynomials over $R$. Let $L$ be a hyperring extension of $R$. By a solution of $f$ over $L$ we mean an element $a=(a_1,...,a_n) \in L^n$ such that $0 \in f(a)$ where we consider $f$ as in \eqref{polyfunction} under $\equiv_L$. We denote by $V_L(f)$ the set of solutions of $f$ over $L$.
\end{mydef}
\begin{rmk}
Suppose that $H$ is a hyperring extension of $L$. It clearly follows from the definition that $V_L(f) \subseteq V_H(f)$.
\end{rmk}

\noindent Let $R[x_1,...,x_n]/\equiv_L$ be the set of equivalence classes of $R[x_1,...,x_n]$ under $\equiv_L$. When $L$ is doubly distributive (hence, so is $R$), the multiplication on $R[x_1,...,x_n]/\equiv_L$ induced from the multi-valued multiplication on $R[x_1,...,x_n]$ is well defined and single-valued (cf. Remark \ref{double} for the definition of doubly distributive property). In fact, suppose that $f, g \in R[x_1,...,x_n]$. If $h \in f\cdot g$ then it follows from the doubly distributive property of $L$ that $h(\alpha)=f(\alpha)\cdot g(\alpha)$ for all $\alpha \in L^n$ (cf. \cite[the remark after Theorem $4.B.$]{viro}). Therefore, under $\equiv_L$, the set $f\cdot g$ becomes a single equivalence class. This is one of the advantages of working with $R[x_1,...,x_n]/\equiv_L$ rather than working directly with $R[x_1,...,x_n]$. 

\begin{myeg}
Let $R=\mathbf{S}$, the hyperfield of signs and $L=\mathcal{T}\mathbb{R}$, Viro's hyperfield. Then $L$ satisfies the doubly distributive property (cf. \cite[Theorem $7.B.$]{viro}). In Example \ref{multivaluedmult}, we computed $(x+1)(x-1)=\{x^2-1,x^2+x-1,x^2-x-1\}$ in $\mathbf{S}[x]$. One can easily see that
\[\forall a \in \mathcal{T}\mathbb{R}, \quad 
(a^2-1)=(a^2+a-1)=(a^2-a-1)=\left\{ \begin{array}{lll}
a^2 \quad \textrm{ if $|a|>1$}\\
\lbrack-1,1\rbrack \quad \textrm{ if $|a|=1$}\\
-1 \quad \textrm{ if $|a|<1$}
\end{array} \right.\]
Therefore $(x^2-1)\equiv_{\mathcal{T}\mathbb{R}} (x^2+x-1) \equiv_{\mathcal{T}\mathbb{R}} (x^2-x-1)$, and $x=1,-1$ are the only solutions of the equivalence class of $(x^2-1)$ under $\equiv_{\mathcal{T}\mathbb{R}}$.
\end{myeg}

\noindent The following example shows that in general one can not expect $R[x_1,...,x_n]/\equiv_L$ to be a hyperring even when $L$ satisfies the doubly distributive property.

\begin{myeg}\label{multiringpoly}
Let $R=L=\mathbf{K}$, the Krasner's hyperfield. Let $[f]$ be the equivalence class of $f \in \mathbf{K}[x]$ under $\equiv_\mathbf{K}$. Then any two non-constant polynomials over $\mathbf{K}$ with the same constant term are equivalent under $\equiv_\mathbf{K}$. It follows that
\[(\mathbf{K}[x]/\equiv_\mathbf{K})=\{[0],[1],[1+x],[x]\}.\]
For the notational convenience, let $0:=[0],1:=[1],a:=[1+x]$, and $b:=[x]$. Then we have the following tables:
\[
\begin{tabular}{ | c | c | c | c | c |}
    \hline
    $+$ & $0$ & $1$ & a & b \\ \hline
    $0$ & $0$ & $1$ &  a & b\\ \hline
    $1$ & $1$ & $\{0,1\}$ & \{a,b\} & a\\ \hline
    a & a & $\{a,b\}$ & $\{0,1,a,b\}$ & $\{1,a\}$\\ \hline
    b & b & a & $\{1,a\}$ & $\{0,b\}$\\
    \hline
    \end{tabular} \qquad 
    \begin{tabular}{ | c | c | c | c | c |}
        \hline
        $\cdot$ & $0$ & $1$ & a & b \\ \hline
        $0$ & $0$ & $0$ &  $0$ & $0$\\ \hline
        $1$ & $0$ & $1$ & $a$ & $b$\\ \hline
        a & $0$ & $a$ &  $a$ & $b$\\ \hline
        b & $0$ & $b$ &  $b$ & $b$\\
        \hline
        \end{tabular}
\]
One can check by using the above tables that $(\mathbf{K}[x]/\equiv_{\mathbf{K}},+)$ is a hypergroup, but it fails to satisfy the distributive law. For example, we have
\[a(1+b)=\{a\} \subseteq a+ab=\{1,a\}.\] 
However, $(\mathbf{K}[x]/\equiv_{\mathbf{K}},+,\cdot)$ still satisfies the weak version of the distributive law: $a(b+c) \subseteq ab+ac$. It follows that $(\mathbf{K}[x]/\equiv_{\mathbf{K}},+,\cdot)$ is not a hyperring but a multiring introduced in \cite{mars1} which assumes the same axioms with a hyperring but the distributive law is weaker as above.\\ 
We recall that if $A$ and $B$ are multirings, one defines a homomorphism of multirings as a map $\varphi:A\longrightarrow B$ such that 
\[\varphi(a+b)\subseteq \varphi(a)+\varphi(b),\quad \varphi(ab)=\varphi(a)\varphi(b), \quad \varphi(1_A)=1_B, \quad \varphi(0_A)=0_B \qquad \forall a,b \in A.\]
Consider the set $\mathbb{A}^1_{\mathbf{K}}(\mathbf{K}):=\Hom_{multi}(\mathbf{K}[x]/\equiv_\mathbf{K},\mathbf{K})$ of multiring homomorphisms from $\mathbf{K}[x]/\equiv_\mathbf{K}$ to $\mathbf{K}$. If $\varphi \in \mathbb{A}^1_{\mathbf{K}}(\mathbf{K})$ then one has $\varphi(a)=1$ since $a+a=(\mathbf{K}[x]/\equiv_{\mathbf{K}})$. One can also easily check that $\varphi(b)$ can be any point of $\mathbf{K}$. It follows that $\mathbb{A}^1_{\mathbf{K}}(\mathbf{K})=\{\varphi_0,\varphi_1\}$, where $\varphi_0(b)=0$ and $\varphi_1(b)=1.$ This suggests that one may consider $\mathbf{K}[x]/\equiv_\mathbf{K}$ as the `coordinate ring' of an affine line over $\mathbf{K}$.
\end{myeg}

\noindent In the sequel, we will always consider an element $f$ of $R[x_1,...,x_n]$ under the equivalence relation \eqref{equivofpoly} with a predesignated hyperring extension $L$ of $R$. We will use the symmetrization process introduced in \cite{Henry} and the previous result of the author in \cite{jaiungthesis}. Let us briefly explain the symmetrization process. For more details, see Appendix \ref{app}.\\ 

Let $M$ be a totally ordered idempotent monoid (with a canonical order). We define
\[s(M):=\{(s,1),(s,-1),0=(0,1)=(0,-1) \mid s \in M\backslash \{0\}\}.\]
Also, we denote $(s,1):=s$, $(s,-1):=-s$, and $|(s,1))|=|(s,-1)|=s$. For any $X=(x,p) \in s(M)$, we define sign$(X)=p$. Then $s(M)$ is a hypergroup with the addition given by
\begin{equation}\label{s(B)addition}
x+y = \left\{ \begin{array}{ll}
x & \textrm{if $|x|>|y|$ or $x=y$}\\
y & \textrm{if $|x|<|y|$ or $x=y$}\\
$[$-x,x$]$=\{(t,\pm 1)\mid t\leq |x|)\} & \textrm{if $y=-x$}
\end{array} \right.
\end{equation}
Let $s:M \longrightarrow s(M)$, $s \mapsto (s,1)$ be the associated map.\\
In \cite{jaiungthesis}, we showed that the symmetrization process can be generalized, in some cases, to semirings by imposing a coordinate-wise multiplication. The symmetrization of a totally ordered idempotent semiring is not a hyperring in general, however, we showed that the symmetrization of a totally ordered idempotent semifield is a hyperfield. The symmetrization is the main technique which we use in this section. Also, the essential idea is similar to that of J.Giansiracusa and N.Giansiracusa's construction of tropical schemes in \cite{noah}.\\

Let $\mathbb{R}_{max}:=\{\mathbb{R}\cup\{-\infty\},+,\max\}$ be the tropical semifield with a maximum convention. Recall that a tropical hypersurface $trop(V(f))$ defined by a polynomial equation $f \in \mathbb{R}_{max}[x_1,...,x_n]$ is the following set:
\begin{equation}\label{tropicalhypersurface}
\{a \in (\mathbb{R}_{max})^n \mid \textrm{ the maximum of $f$ is achieved at least twice} \}.
\end{equation} 
For the notational convenience, let $M=\mathbb{R}_{max}$ and $M_S=R=s(\mathbb{R}_{max})$, the symmetrization of $M$. Note that $M_S$ is a hyperfield since $M$ is a totally ordered idempotent semifield as we mentioned above.\\ 
Let $f(x_1,...,x_n) \in M[x_1,...,x_n]$. Write $f(x_1,...,x_n)=\sum_i m_i(x_1,...,x_n)$ as a sum of distinct monomials and fix this presentation. Then we define
\[f_{\hat{i}}(x_1,...,x_n):=\sum_{j \neq i} m_j(x_1,...,x_n) \in M[x_1,...,x_n] \quad \textrm{ for each $i$}.\]
By identifying an element $a \in M$ with the element $(a,1) \in M_S=R$, we define \[\tilde{f}_{\hat{i}}(x_1,...,x_n):=(\sum_{j \neq i} (m_j(x_1,...,x_n),1))+(m_i(x_1,...,x_n),-1) \in R[x_1,...,x_n].\]
With these notations we have the following description of a tropical hypersurface.
 
\begin{pro}\label{trposym}
With the same notation as above, we let
\[V(\tilde{f}_{\hat{i}}):=\{z \in R^n \mid 0_R \in \tilde{f}_{\hat{i}}(z)\},\quad  
HV(f):=\bigcup_i V(\tilde{f}_{\hat{i}}).\]
With the coordinate-wise symmetrization map, $\varphi=s^n:M^n \longrightarrow R^n$, we have a set bijection:
\[ trop(V(f)) \simeq (HV(f) \cap \Img(\varphi)),\] 
where $trop(V(f))$ is the tropical variety defined by $f \in M[x_1,...,x_n]$.
\end{pro}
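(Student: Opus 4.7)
The strategy is to translate the ``max achieved at least twice'' condition defining $trop(V(f))$ into the condition ``$0$ lies in a hypersum in $s(M)$'' via the symmetrization $s: M \to M_S = R$. Since the coordinate-wise symmetrization $\varphi = s^n$ is clearly injective (with image the ``positive cone'' $\Img(s)^n \cup \{0\}$-coordinates), it suffices to prove, for every $a = (a_1,\dots,a_n) \in M^n$, the equivalence
\[
a \in trop(V(f)) \iff \varphi(a) \in HV(f).
\]
For such $a$, write $v_j := m_j(a_1,\dots,a_n) \in M$ for the value of the $j$-th monomial. Evaluating using the identification $s(b) = (b,1)$, one gets
\[
\tilde{f}_{\hat{i}}(\varphi(a)) \;=\; \sum_{j \neq i}(v_j,+1) \;+\; (v_i,-1) \;\subseteq\; R.
\]

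The technical core of the proof is the following hyperarithmetic lemma, which I would state and prove first: for any finite family $\{(w_k,\epsilon_k)\}$ in $s(M)$, set $W^{\pm} := \max\{|w_k| : \epsilon_k = \pm 1\}$ (with $-\infty$ if the index set is empty). Then
\[
0_R \in \sum_k (w_k,\epsilon_k) \iff W^{+} = W^{-}.
\]
The forward direction: if $W^+ \neq W^-$, say $W^+ > W^-$, then by induction on the number of summands and repeated use of the rule \eqref{s(B)addition} (the element of strictly larger absolute value absorbs the smaller one), the entire hypersum collapses to the singleton $\{(W^+,+1)\}$, which does not contain $0$. The reverse direction: if $W^+ = W^-$, pick indices $j^+, j^-$ realizing these maxima; then $(w_{j^+},+1)+(w_{j^-},-1) = [-W^+, W^+]$ contains $0$, and adding the remaining terms (each of absolute value $\le W^+$) to a representative of the form $(-w_k,-\epsilon_k) \in [-W^+,W^+]$ preserves the presence of $0$ in the sum. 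This interval-absorption argument is the one delicate point, because the hyperaddition is multi-valued and one must track the union over all ways of associating the sum; I would carry it out by induction on the number of summands, always forming the sum of the pair realizing $W^+=W^-$ first.

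Applying the lemma to $\tilde{f}_{\hat{i}}(\varphi(a))$, where the sign pattern is $(+1,\dots,+1,-1)$ and the absolute values are $(v_j)_{j \neq i}$ and $v_i$, yields
\[
0_R \in \tilde{f}_{\hat{i}}(\varphi(a)) \iff v_i = \max_{j \neq i} v_j.
\]
Thus $\varphi(a) \in HV(f) = \bigcup_i V(\tilde{f}_{\hat{i}})$ iff there exists an index $i$ with $v_i = \max_{j \neq i} v_j$, and an elementary argument shows this is equivalent to the maximum of $\{v_j\}_j$ being attained by at least two distinct monomials (existence of such $i$ forces $v_i$ to equal some $v_{j_0}$ with $j_0 \neq i$ while dominating all others; conversely if the maximum is achieved at $k_1 \neq k_2$, take $i = k_1$).

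Combining these equivalences, $\varphi$ restricts to the desired bijection between $trop(V(f))$ and $HV(f)\cap \Img(\varphi)$. The main obstacle is entirely localized in the hyperarithmetic lemma above: verifying that the multi-valued, non-doubly-distributive hypersum in $s(M)$ contains $0$ exactly when the maxima of the two sign classes agree. Once that is established, the translation between tropical vanishing and membership in $HV(f)$ is immediate from the definitions.
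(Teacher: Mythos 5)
Your proposal is correct and follows essentially the same route as the paper: reduce to showing $a\in trop(V(f))\iff\varphi(a)\in HV(f)$ and then observe that $0_R$ lies in the hypersum $\sum_{j\neq i}(v_j,1)+(v_i,-1)$ exactly when $v_i=\max_{j\neq i}v_j$, which is what the paper's proof asserts directly when it collapses $\sum_{j\neq i}(m_j(z),1)$ to $(m_r(z),1)$ with $m_r(z)=m_i(z)$ dominating the rest. Your only real addition is that you isolate and prove this absorption fact as a general lemma about arbitrary sign patterns (the paper leaves the computation implicit), which is a reasonable and correct way to make the delicate multi-valued step explicit.
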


\begin{rmk}
Even though we started by fixing one presentation of a polynomial equation $f \in M[x_1,...,x_n]$, the set $trop(V(f))$ does not depend on the chosen presentation of $f$. Therefore, even though $HV(f)$ may vary depending on a presentation of $f$, the set $HV(f)\bigcap \Img(\varphi)$ is invariant of the presentation as long as there is no repetition of monomials.
\end{rmk}
\noindent Before we prove Proposition \ref{trposym}, we present an example to show how this procedure works. 

\begin{myeg}\label{egnotation}
Let $f(x,y)=x+y+1 \in \mathbb{R}_{max}[x,y]$. Then $trop(V(f))$ consists of three rays:
\[
trop(V(f))=\{(x,y) \in \mathbb{R}_{max} \times \mathbb{R}_{max} \mid 1 \leq y=x, \textrm{ or } y \leq x=1, \textrm{ or }x \leq y=1\}.
\]
With the above notations, we have 
\begin{equation}\label{tropeg}
f_x(x,y):=y+1,\quad  \tilde{f}_x(x,y):=(1,1)y+(1,1)+(1,-1)x.
\end{equation} 
Since we only consider the `positive' solutions, $x$ and $y$ should be of the form $(t,1)$. Therefore, in this case, we may assume that
\begin{equation}\label{tropegequiv}
\tilde{f}_x(x,y)=(y,1)+(1,1)+(x,-1)=(y+1,1)+(x,-1).
\end{equation} 
 By the definition of symmetrization, we have
\[0_R \in \tilde{f}_x(x,y) \Longleftrightarrow y+1=x \textrm{ in } \mathbb{R}_{max}.\]
Thus, we obtain 
\[
\{(x,y) \in \mathbb{R}_{max} \times \mathbb{R}_{max} \mid 1\leq y=x, \textrm{ or } y \leq x=1\}=V(\tilde{f}_x(x,y))\bigcap (s(\mathbb{R}_{max}) \times s(\mathbb{R}_{max})).\] 
Similarly with $f_y(x,y)=x+1$ we have
\[ 0_R \in \tilde{f}_y(x,y) \Longleftrightarrow x+1=y \textrm{ in } \mathbb{R}_{max}.\]
This time, we obtain
\[
\{(x,y) \in \mathbb{R}_{max} \times \mathbb{R}_{max} \mid 1\leq x=y, \textrm{ or }x \leq y=1\}=V(\tilde{f}_y(x,y))\bigcap (s(\mathbb{R}_{max}) \times s(\mathbb{R}_{max})).\]  
Finally, with $f_1(x,y)=x+y$, we have
\[0_R \in \tilde{f}_1(x,y) \Longleftrightarrow x+y=1 \textrm{ in } \mathbb{R}_{max}.\]
This gives \[
\{(x,y) \in \mathbb{R}_{max} \times \mathbb{R}_{max} \mid y\leq x=1, \textrm{ or }x \leq y=1\}=V(\tilde{f}_1(x,y))\bigcap (s(\mathbb{R}_{max}) \times s(\mathbb{R}_{max})).\] 
By taking the union of all three we recover 
\[trop(V(f))=(\bigcup_{z \in \{x,y,1\}}V(\tilde{f}_z(x,y)))\bigcap(s(\mathbb{R}_{max}) \times s(\mathbb{R}_{max})).
\]
\end{myeg}

\noindent Now we give the proof of Proposition \ref{trposym}.

\begin{proof}
When $f$ is a single monomial, the result is clear since $0_M$ and $0_R$ will be the only solution for each. Thus we may assume that $f$ is not a monomial. If $z=(z_1,...,z_n) \in trop(V(f))$ then there exist $m_i(x_1,...,x_n)$, $m_j(x_1,...,x_n)$ with $i \neq j$ such that the value $m_i(z)=m_j(z)$ attains the maximum among all monomials $m_r(z)$. Then we have $f(z)=m_i(z)=m_j(z) \in M$. It follows that
\[0_R \in (f(z),1)+(m_i(z),-1)=(f_{\hat{i}}(z),1)+(m_i(z),-1)=\tilde{f}_{\hat{i}}(\varphi(z)).\] 
Thus we have $\varphi(z) \in HV(f)$.\\ 
Conversely, suppose that $\varphi(z) \in HV(f) \cap \Img(\varphi)$. Let $\varphi(z)=(\varphi(z_i))$, where $(z_i,1) \in M \times \{1\} \subseteq R$. Then, by the definition of $HV(f)=\bigcup V(\tilde{f}_{\hat{i}})$, $\varphi(z)$ is an element of $ V(\tilde{f}_{\hat{i}})$ for some $i$. In other words,
\[ 0_R\in (\sum_{j \neq i}(m_j(z),1)) +(m_i(z),-1)=\tilde{f}_{\hat{i}}(\varphi(z)). \]
Therefore, there exists some $r \neq i$ such that 
\[\sum_{j \neq i}(m_j(z),1)=(m_r(z),1)=(m_i(z),1)\quad 
\textrm{and} \quad m_j(z) \leq m_r(z)=m_i(z) \qquad \forall j \neq i,r.\] 
It follows that $z \in trop(V(f))$. So far we have showed that \[
\varphi(trop(V(f)))=HV(f) \cap \Img(\varphi).\] 
Since $\varphi$ is one-to-one, we conclude that $trop(V(f)) \simeq HV(f) \cap \Img(\varphi)$ as sets.
In other words, $trop(V(f))$ is the `positive' part of $HV(f)$.
\end{proof}

\begin{rmk}\label{tropdifferconventional}
Our definition \eqref{tropicalhypersurface} of $trop(V(f))$ may contain a point $a=(a_1,...,a_n)$ such that $a_i=-\infty(=0_M)$ for some $i$. This is little different from the conventional definition of a tropical hypersurface in which one excludes such points. However, from the proof of Proposition \ref{trposym}, one can observe that the subset of $trop(V(f))$ which does not have $0_M$ at any coordinate maps bijectively onto the subset of $H(V(f))\bigcap \Img(\varphi)$ which does not have $0_R$ at any coordinate. 
\end{rmk}
\noindent When $I$ is an ideal of $\mathbb{R}_{max}[x_1,..,x_n]$ one defines a tropical variety defined by $I$ as follows:
\begin{equation}\label{tropicalvar}
trop(V(I)):= \bigcap_{f\in I} trop(V(f)).
\end{equation}
One has to be careful with \eqref{tropicalvar} since the intersection is over all polynomials in $I$ not just over a set of generators of $I$ (cf. \cite{bernd}). To understand \eqref{tropicalvar} as the `positive' part of an algebraic set over hyperstructures, we extend the previous proposition as follows.

\begin{pro}\label{tropicalvareityispositivepart}
Let $I$ be an ideal of $\mathbb{R}_{max}[x_1,..,x_n]$. Then, with the same notation as Proposition \ref{trposym}, we have a set bijection via $\varphi=s^n$: 
\[trop(V(I)):= \bigcap_{f\in I} trop(V(f)) \simeq (\bigcap_{f\in I}HV(f)) \bigcap \Img\varphi.\] 
\end{pro}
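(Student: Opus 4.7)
The plan is to reduce this to Proposition \ref{trposym} by taking intersections and exploiting that the symmetrization map $\varphi = s^n$ is injective. Essentially all the work has been done in the single-polynomial case, so this proposition should follow almost formally.

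First I would observe that $\varphi : M^n \to R^n$ is injective (since $s : M \to s(M) = R$ is injective by construction of the symmetrization, sending $a \mapsto (a,1)$). Because $\varphi$ is injective, it commutes with arbitrary intersections of subsets of $M^n$, i.e.\ for any family $\{A_f\}_{f \in I}$ of subsets of $M^n$,
\[
\varphi\left(\bigcap_{f \in I} A_f\right) \;=\; \bigcap_{f \in I} \varphi(A_f).
\]
Applying this with $A_f = trop(V(f))$, and using that Proposition \ref{trposym} actually produces the identity $\varphi(trop(V(f))) = HV(f) \cap \Img(\varphi)$ (as was proven in the body of that proposition, not merely a bijection), I obtain
\[
\varphi(trop(V(I))) \;=\; \bigcap_{f \in I}\bigl(HV(f) \cap \Img(\varphi)\bigr) \;=\; \Bigl(\bigcap_{f \in I} HV(f)\Bigr) \cap \Img(\varphi).
\]
Combined once more with injectivity of $\varphi$, this yields the required set bijection
\[
trop(V(I)) \;\simeq\; \Bigl(\bigcap_{f \in I} HV(f)\Bigr) \cap \Img(\varphi).
\]

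There is no real obstacle beyond being careful about one subtlety: although the definition of $HV(f)$ a priori depends on the chosen presentation of $f$ as a sum of distinct monomials, the intersection $HV(f) \cap \Img(\varphi)$ does not (as noted in the remark following Proposition \ref{trposym}), and the bijection of Proposition \ref{trposym} respects this. Hence forming $\bigcap_{f \in I} HV(f) \cap \Img(\varphi)$ is unambiguous and independent of presentations, which is exactly what is needed so that the right-hand side is well-defined. Once this is observed, the argument is purely set-theoretic and relies only on injectivity of $\varphi$ together with the single-polynomial case already established.
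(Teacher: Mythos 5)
Your proposal is correct and follows essentially the same route as the paper: both reduce to the identity $\varphi(trop(V(f))) = HV(f)\cap\Img(\varphi)$ from the single-polynomial case and conclude by injectivity of $\varphi$; the paper just verifies the two inclusions element-by-element where you invoke the general fact that injective maps commute with intersections. The remark about presentation-independence of $HV(f)\cap\Img(\varphi)$ is a nice touch but not a substantive difference.
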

\begin{proof}
Take any $z \in trop(V(I)) \subseteq (\mathbb{R}_{max})^n$, then by definition, $z \in \bigcap_{f\in I} trop(V(f))$. That is $z \in trop(V(f))$ $\forall f \in I$. It follows from the previous proposition that $\varphi(z) \in HV(f)$ $\forall f \in I$, thus $\varphi(z) \in (\bigcap_{f\in I}HV(f)) \bigcap \Img\varphi$.\\ 
Conversely, if $\varphi(z) \in (\bigcap_{f\in I}HV(f)) \bigcap \Img\varphi$ then $\varphi(z) \in HV(f)$ $\forall f \in I$. From the previous proposition it follows that $z \in trop(V(f))$ $\forall f \in I$, hence $z \in trop(V(I))$. Thus we have
\[\varphi(trop(V(I)))=(\bigcap_{f\in I}HV(f)) \bigcap \Img\varphi.\]
The conclusion follows from the injectivity of $\varphi$.
\end{proof}

We close this section by providing a pictorial image which shows how our construction works.\\
Let $\mathbb{R}_{\geq 0}$ be a semiring with a underlying set as the set of nonnegative real numbers, an addition is given by the maximum, and the multiplication is the usual multiplication of real numbers. Then one can observe that there is an isomorphism $\varphi$ of semirings as follows:
\begin{equation}\label{RmanxandTR}
\varphi: \mathbb{R}_{max} \longrightarrow \mathbb{R}_{\geq 0}, \quad r \mapsto t^r \textrm{ and } t^{-\infty}:=0, \textrm{ where t is any real number greater than $1$.}
\end{equation}
Under the symmetrization process, we have $s(\mathbb{R}_{\geq 0})=\mathcal{T}\mathbb{R}$, a Viro's hyperfield.

\begin{myeg}
Let $f(x,y):=(0\odot x) \oplus (0\odot y) \oplus 0 \in \mathbb{R}_{max}[x,y]$. Then, under the isomorphism $\varphi$, this becomes $f(x,y)=x+y+1 \in \mathcal{T}\mathbb{R}[x,y]$. If we follow the notation as in Example \ref{egnotation}, we obtain:
\[\tilde{f}_x(x,y)=y+1-x, \quad \tilde{f}_y(x,y)=x+1-y, \quad \tilde{f}_1(x,y)=x+y-1.
\]
First, we obtain the following picture from $f(x,y)$ in $\mathbb{R}^2$:
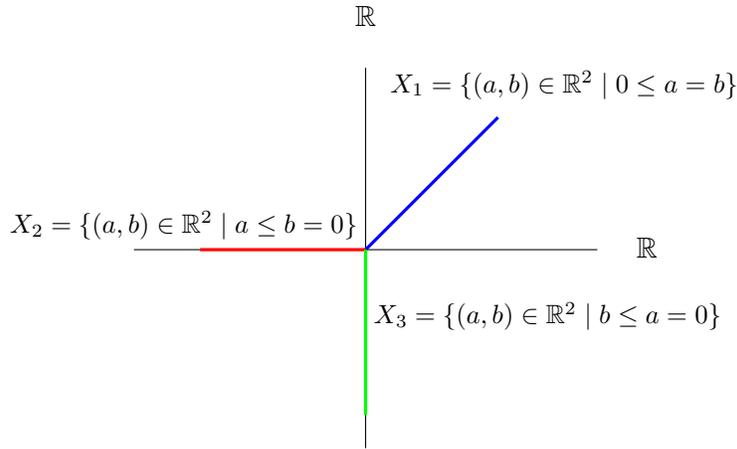
\begin{figure}[h]
\center{
\begin{tikzpicture}[>=angle 90,scale=2.2,text height=1.5ex, text depth=0.25ex]
\draw[-]
(-1.4,0) edge (1.4,0)
(0,-1.2) edge (0,1.1);
\node (y) at (0,1.4) {$\mathbb{R}$};
\node (x) at (1.7,0) {$\mathbb{R}$};
\node (l1) at (1.2,1) {\small{$X_1=\{(a,b) \in \mathbb{R}^2\mid 0\leq a=b$\}}};
\draw[-, blue, very thick]
(0,0) edge (0.8,0.8);
\node(l2) at (-1.1,0.15){\small{$X_2=\{(a,b) \in \mathbb{R}^2\mid a\leq b=0$\}}};
\draw[-, red, very thick]
(0,0) edge (-1,0);
\node(l3) at (1.1,-0.4){\small{$X_3=\{(a,b) \in \mathbb{R}^2\mid b\leq a=0$\}}};
\draw[-, green, very thick]
(0,0) edge (0,-1);
\end{tikzpicture}
}
\caption{Tropical Line in $\mathbb{R}^2$} 
\end{figure}
\\
On the other hand, we obtain the following picture in $(\mathcal{T}\mathbb{R})^2$:
\begin{figure}[h]
\center{
\begin{tikzpicture}[>=angle 90,scale=2.2,text height=1.5ex, text depth=0.25ex]
\draw[-]
(-5,0) edge (-3,0)
(-2.4,0) edge (-0.4,0)
(-4,-1) edge (-4,1)
(-1.4,-1) edge (-1.4,1)
(1.4,-1) edge (1.4,1)
(0.4,0) edge (2.4,0);
\node (fx) at (-4,1.6) {$\tilde{f}_x(x,y)=y+1-x$};
\node (fy) at (-1.4,1.6) {$\tilde{f}_y(x,y)=x+1-y$};
\node (f1) at (1.4,1.6) {$\tilde{f}_1(x,y)=x+y-1$};
\node (y) at (-4,1.2) {$\mathcal{T}\mathbb{R}$};
\node (y) at (-1.4,1.2) {$\mathcal{T}\mathbb{R}$};
\node (y) at (1.4,1.2) {$\mathcal{T}\mathbb{R}$};
\node (x) at (-2.8,0) {$\mathcal{T}\mathbb{R}$};
\node (x) at (-0.2,0) {$\mathcal{T}\mathbb{R}$};
\node (x) at (2.6,0) {$\mathcal{T}\mathbb{R}$};
\draw[-, blue, very thick]
(-0.7,0.5) edge (-0.2,1)
(-3.3,0.5) edge (-2.8,1);
\draw[-, red, very thick]
(1.4,0.5) edge (2.1,0.5)
(-4,0.5) edge (-3.3,0.5);
\draw[-, green, very thick]
(2.1,0.5) edge (2.1,0)
(-0.7,0) edge (-0.7,0.5);
\end{tikzpicture}
}
\caption{Tropical Line in $(\mathcal{T}\mathbb{R})^2$} 
\end{figure}
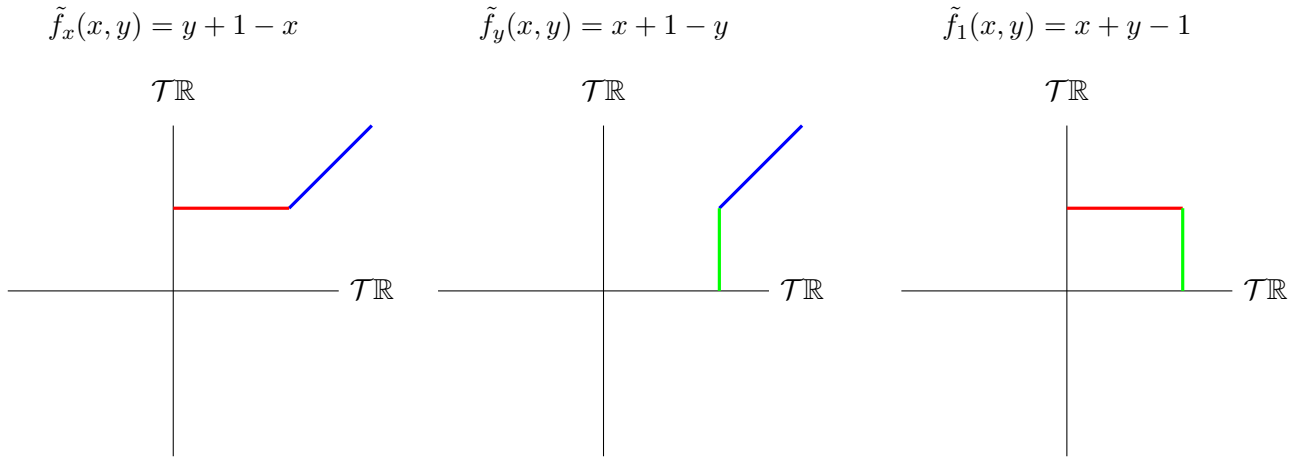

\end{myeg}
\subsection{Construction of hyperring schemes} \label{schmeandzeta}
In this subsection, we study several notions of algebraic geometry over hyperrings from the scheme-theoretic point of view: we use results in \S \ref{lem} to construct an integral hyperring scheme and prove that $\Gamma(X,\mathcal{O}_X) \simeq R$ for an affine integral hyperring scheme $(X,\mathcal{O}_X)$.\\ 
In classical algebraic geometry, a scheme is a pair $(X,\mathcal{O}_X)$ of a topological space $X$ and the structure sheaf $\mathcal{O}_X$ on $X$. The implementation of the notion of structure sheaf is essential to link local and global algebraic data.\\
Let $A$ be a commutative ring and $(X=\Spec A, \mathcal{O}_X)$ be an affine scheme. One of important results in classical algebraic geometry is the following:
\begin{equation}\label{globalsection}
\mathcal{O}_X(X) \simeq A.
\end{equation}
In other words, a commutative ring $A$ can be understood as the ring of global regular functions on the topological space $X=\Spec A$. When we directly generalize the construction of the structure sheaf of a commutative ring to a hyperring, \eqref{globalsection} no longer holds (see, Example \ref{sheafnoexample}). Furthermore, in this case, $\mathcal{O}_X$ does not even seem to be a sheaf of hyperrings (see, Remark \ref{structuresheafsubtle}). To this end, we construct the structure sheaf on the topological space $X=\Spec R$ only when $R$ is a hyperring without (multiplicative) zero-divisors. We follow the classical construction.\\

Let $R$ be a hyperring and $X=\Spec R$. For an open subset $U\subseteq X$, we define
\begin{equation}\label{structuresehafdefin}
 \mathcal{O}_X(U):=\{s:U \rightarrow \bigsqcup_{\mathfrak{p}\in U} R_\mathfrak{p}\},
\end{equation} 
where $s \in \mathcal{O}_X(U)$ are sections such that $s(\mathfrak{p}) \in R_\mathfrak{p}$ which also satisfy the following property: for each $\mathfrak{p} \in U$, there exist a neighborhood $V_\mathfrak{p}\subseteq U$ of $\mathfrak{p}$ and $a, f \in R$ such that
\begin{equation}\label{sheafcondition}
\forall \mathfrak{q} \in V_\mathfrak{p},\quad  f \notin \mathfrak{q} \textrm{ and } s(\mathfrak{q})=\frac{a}{f} \textrm{ in } R_\mathfrak{q}.
\end{equation}
A restriction map $\mathcal{O}_X(U) \longrightarrow \mathcal{O}_X(V)$ is given by sending $s$ to $s\circ i$, where $i:V \hookrightarrow U$ is an inclusion map. Then, clearly $\mathcal{O}_X$ is a sheaf of sets on $X$. Moreover, one can define the multiplication $s\cdot t$ of sections $s,t \in \mathcal{O}_X(U)$ as follows:
\begin{equation}\label{sheafmlti}
s \cdot t : U \rightarrow \bigsqcup R_\mathfrak{p}, \quad \mathfrak{p} \mapsto s(\mathfrak{p})t(\mathfrak{p}).
\end{equation}
Equipped with the above multiplication, one can easily see that $\mathcal{O}_X$ becomes a sheaf of (multiplicative) monoids on $X$. Furthermore, $\mathcal{O}_X(U)$ is equipped with the following hyperoperation:
\begin{equation}\label{sheafhypadd}
s+t=\{r \in \mathcal{O}_X(U) \mid r(\mathfrak{p}) \in s(\mathfrak{p})+t(\mathfrak{p}),\quad \forall \mathfrak{p} \in U \}.
\end{equation}

\begin{rmk}\label{structuresheafsubtle}
This construction is essentially same as in \cite{Rota}. However, in \cite{Rota}, the proof is incomplete in the sense that the authors did not prove that \eqref{sheafhypadd} is associative and distributive with respect to \eqref{sheafmlti}. Moreover, the main purpose of this subsection is to recover a hyperring $R$ as the hyperring of global sections on a topological space $\Spec R$ while the main goal of the authors of \cite{Rota} was to construct hyperring schemes. In Theorem \ref{sheaf}, we prove that when $R$ does not have (multiplicative) zero-divisors, $\mathcal{O}_X$ is indeed the sheaf of hyperrings, and $\mathcal{O}_X(X)\simeq R$.
\end{rmk}

\begin{mydef}
A hyperring $R$ is called a hyperdomain if $R$ does not have (multiplicative) zero-divisors. In other words, for $x, y \in R$, if $xy=0$ then either $x=0$ or $y=0$.
\end{mydef}

\noindent Let $R$ be a hyperdomain and $S:=R\backslash \{0\}$. Clearly $K:=\Frac(R)=S^{-1}R$ is a hyperfield and the canonical homomorphism $S^{-1}:R \longrightarrow K$ of hyperrings sending $r$ to $\frac{r}{1}$ is strict and injective.\\
Let $S_f=\{1,f\neq 0,...,f^n,...\}$ be a multiplicative subset of $R$ and $R_f:=S_f^{-1}R$, then we have the canonical homomorphisms of hyperrings $R \hookrightarrow R_f \hookrightarrow K$ which are injective and strict. Therefore, in the sequel, we consider $R_f$ as a hyperring extension of $R$ and $K$ as a hyperring extension of both $R$ and $R_f$ via the above canonical maps. For $\mathfrak{p} \in \Spec R$, we denote by $R_\mathfrak{p}$ the hyperring $S^{-1}R$, where $S=R\backslash \mathfrak{p}$.

\begin{lem}\label{setwithhyperaddtionisotohyperring}
Let $A$ be a set equipped with the two binary operations: \[+_A:A \times A \longrightarrow P^*(A),\quad \cdot_A:A\times A \longrightarrow A ,\]
where $P^*(A)$ is the set of nonempty subsets of $A$. Suppose that $R$ is a hyperring and there exists a set bijection $\varphi:A \longrightarrow R$ such that $\varphi(a+_Ab)=\varphi(a)+\varphi(b)$ and $\varphi(a\cdot_Ab)=\varphi(a)\varphi(b)$. Then, $A$ is a hyperring isomorphic to $R$.
\end{lem}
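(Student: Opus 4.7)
The plan is to transport the hyperring structure from $R$ to $A$ along the bijection $\varphi$. The key preliminary observation is that the compatibility $\varphi(a+_A b)=\varphi(a)+\varphi(b)$ extends to arbitrary nonempty subsets: if we set $\varphi(B):=\{\varphi(b):b\in B\}$ for $B\subseteq A$, then for $B,C\subseteq A$ we get $\varphi(B+_A C)=\bigcup_{b\in B,c\in C}\varphi(b+_A c)=\bigcup_{b,c}\varphi(b)+\varphi(c)=\varphi(B)+\varphi(C)$, and likewise $\varphi(B\cdot_A C)=\varphi(B)\cdot\varphi(C)$. Since $\varphi$ is a bijection, so is the induced map $\mathcal{P}^*(A)\to\mathcal{P}^*(R)$, so any set-theoretic identity involving $+_A$ and $\cdot_A$ on $A$ holds if and only if the image identity holds in $R$.

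Next I would define $0_A:=\varphi^{-1}(0_R)$ and $1_A:=\varphi^{-1}(1_R)$, and verify the hyperring axioms one by one using the transport principle. For commutativity and associativity of $+_A$ and $\cdot_A$, apply $\varphi$ to each side and use the corresponding identity in $R$; by injectivity of $\varphi$ the pullback identity holds in $A$. The identity $0_A+_A a=a$ follows because $\varphi(0_A+_A a)=0_R+\varphi(a)=\{\varphi(a)\}$, and uniqueness of $0_A$ follows because any other neutral $e$ satisfies $\varphi(e)+r=r$ for every $r\in R$. For the unique inverse, the element $-a:=\varphi^{-1}(-\varphi(a))$ satisfies $0_R\in\varphi(a)+\varphi(-a)$, hence $0_A\in a+_A(-a)$; uniqueness follows by applying $\varphi$ to any candidate relation $0_A\in a+_A b$ and invoking the uniqueness of inverses in $R$. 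Reversibility, distributivity, absorption of $0$, and $0_A\neq 1_A$ are all handled identically by pushing the relation forward to $R$ and pulling back via the bijection.

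Once every axiom is verified, $(A,+_A,\cdot_A,0_A,1_A)$ is a hyperring, and the defining compatibilities $\varphi(a+_Ab)=\varphi(a)+\varphi(b)$, $\varphi(a\cdot_Ab)=\varphi(a)\cdot\varphi(b)$ together with $\varphi(0_A)=0_R$, $\varphi(1_A)=1_R$ make $\varphi$ a strict homomorphism of hyperrings; bijectivity then upgrades it to an isomorphism. There is no real obstacle here — the statement is essentially a transport-of-structure lemma — the only mild point of care is to notice at the outset that the hypothesis on singletons automatically extends to subsets, which is what makes every verification reduce to a one-line pushforward to $R$.
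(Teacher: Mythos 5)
Your proposal is correct and follows the same transport-of-structure argument as the paper, which defines $0_A:=\varphi^{-1}(0_R)$, $1_A:=\varphi^{-1}(1_R)$ and verifies each axiom by pushing forward to $R$ and pulling back via the bijection. Your explicit remark that the singleton compatibility extends to arbitrary subsets is a useful clarification that the paper leaves implicit.
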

\begin{proof}
The proof is straightforward. For example, $\varphi^{-1}(0_R):=0_A$ is the neutral element. In fact, for $a \in A$, we have $\varphi(0_A+_Aa)=\varphi(0_A)+\varphi(a)=0_R+\varphi(a)=\varphi(a)$. Since $\varphi$ is bijective, it follows that $0_A+_Aa=a$ $\forall a\in A$. Similarly, $1_A:=\varphi^{-1}(1_R)$ is the identity element. For $a \in A$, we can write $-\varphi(a)=\varphi(b)$ for some $b \in A$. Then, we have $\varphi(a+_Ab)=\varphi(a)+\varphi(b)=\varphi(a)-\varphi(a)$. It follows that $0_R \in \varphi(a+_Ab)$, hence $0_A \in a+_Ab$. The other properties can be easily checked and hence $A$ is a hyperring. Clearly, $A$ and $R$ are isomorphic via $\varphi$.
\end{proof}

\begin{mythm}\label{sheaf}
Let $R$ be a hyperdomain, $K=\Frac(R)$, and $X=\Spec R$. Let $\mathcal{O}_X$ be the sheaf of multiplicative monoids on $X$ as in \eqref{structuresehafdefin}, equipped with the hyperaddition \eqref{sheafhypadd}. Then, the following hold
\begin{enumerate}
\item
$\mathcal{O}_X(D(f))$ is a hyperring isomorphic to $R_f$. In particular, if $f=1$, we have 
\[R \simeq \mathcal{O}_X(X)(=\Gamma(X,\mathcal{O}_X)).\]
\item
For each open subset $U$ of $X$, $\mathcal{O}_X(U)$ is a hyperring. More precisely, $\mathcal{O}_X(U)$ is isomorphic to the following hyperring:
\[\mathcal{O}_X(U) \simeq Y(U):=\{u \in K\mid\forall \mathfrak{p} \in U,u=\frac{a}{b}\textrm{ for some } b \notin \mathfrak{p}\}.\]
Moreover, by considering the canonical map $R_f \hookrightarrow K$, we have
\[\mathcal{O}_X(U)\simeq \bigcap_{D(f)\subseteq U}\mathcal{O}_X(D(f)).\]
\item
For each $\mathfrak{p} \in X$, the stalk $\mathcal{O}_{X,\mathfrak{p}}$ exists and is isomorphic to $R_\mathfrak{p}$.
\end{enumerate}
\end{mythm}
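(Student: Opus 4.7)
The plan is to exploit the embeddings $R \hookrightarrow R_f \hookrightarrow R_\mathfrak{p} \hookrightarrow K$ that become available once $R$ is a hyperdomain: every local datum used to define $\mathcal{O}_X$ sits inside the single hyperfield $K$, so the task of verifying associativity and distributivity of the multi-valued addition \eqref{sheafhypadd} can be outsourced to $K$ by way of Lemma \ref{setwithhyperaddtionisotohyperring}. A crucial preliminary observation is that $(0)$ is a prime hyperideal (since $R$ has no zero-divisors), hence $Nil(R)=(0)$, and Proposition \ref{irreducible} forces $X$ to be irreducible; every nonempty open $U \subseteq X$ is then itself irreducible, so any two nonempty basic opens inside $U$ meet.

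For part (1), I would define
\[
\varphi_f : R_f \longrightarrow \mathcal{O}_X(D(f)), \qquad \frac{a}{f^n} \mapsto \Big(\mathfrak{p} \mapsto \tfrac{a}{f^n} \in R_\mathfrak{p}\Big).
\]
It is a monoid homomorphism by direct inspection, compatible with \eqref{sheafhypadd} through the strict embedding $R_f \hookrightarrow K$, and injective because that embedding is injective. For surjectivity, take $s \in \mathcal{O}_X(D(f))$ and cover $D(f)$ by basic opens $D(g_i)$ on which $s = b_i/g_i^{n_i}$ (shrinking if needed, using that $D(g) \subseteq D(f')$ forces $g^m \in f'R$). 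All the $b_i/g_i^{n_i}$ agree pairwise inside $K$ on the nonempty overlaps $D(g_ig_j)$, hence produce a single element $u \in K$. Quasi-compactness of $D(f)$ selects a finite subcover; the identity $V(f) = \bigcap_i V(g_i^{n_i})$ combined with Lemma \ref{nilradical} and Lemma \ref{generatingideal} yields $f^N \in \sum_i r_i g_i^{n_i}$ for some $r_i \in R$, and
\[
f^N u \in \Big(\sum_i r_i g_i^{n_i}\Big) u = \sum_i r_i (g_i^{n_i} u) = \sum_i r_i b_i \subseteq R
\]
produces an element $a \in R$ with $u = a/f^N \in R_f$. Transport of structure along $\varphi_f$ then supplies the hyperring structure on $\mathcal{O}_X(D(f))$.

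For part (2), I would fix any $\mathfrak{p}_0 \in U$ and define $\psi_U : \mathcal{O}_X(U) \to K$ by $s \mapsto s(\mathfrak{p}_0) \in R_{\mathfrak{p}_0} \hookrightarrow K$. Irreducibility of $U$ forces this value to be independent of $\mathfrak{p}_0$ because any two local representations of $s$ agree on a nonempty overlap and therefore in $K$. The image of $\psi_U$ is exactly $Y(U) = \bigcap_{\mathfrak{p}\in U} R_\mathfrak{p}$: given $u \in Y(U)$, writing $u = a_\mathfrak{p}/b_\mathfrak{p}$ with $b_\mathfrak{p} \notin \mathfrak{p}$ gives a valid local representation of the section $\mathfrak{p} \mapsto u$ on $D(b_\mathfrak{p}) \cap U$. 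Since $Y(U)$ is plainly a sub-hyperring of $K$ (the intersection of sub-hyperrings $R_\mathfrak{p}$), Lemma \ref{setwithhyperaddtionisotohyperring} endows $\mathcal{O}_X(U)$ with a hyperring structure, and strictness of $R_\mathfrak{p} \hookrightarrow K$ guarantees that \eqref{sheafhypadd} matches the hyperaddition of $Y(U)$. The intersection formula then reduces to $Y(U) = \bigcap_{D(f)\subseteq U} R_f$ inside $K$, which is immediate from part (1) applied to each $D(f)$.

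For part (3), the stalk $\mathcal{O}_{X,\mathfrak{p}} = \varinjlim_{U \ni \mathfrak{p}} \mathcal{O}_X(U)$ is computed along the cofinal system of basic opens $D(f)$ with $f \notin \mathfrak{p}$; by part (1) this is $\varinjlim_{f \notin \mathfrak{p}} R_f$, and since the transition maps are strict injections into the hyperfield $K$, the colimit is realized as the union of the images inside $K$, which is exactly $S^{-1}R = R_\mathfrak{p}$ for $S = R \setminus \mathfrak{p}$. The main obstacle I anticipate is the surjectivity step of part (1): one must verify that the denominator-clearing argument survives the weakened distributive law of hyperrings (Remark \ref{double}). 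This works precisely because the computation is effectively carried out in $K$, where cancellation on nonzero elements and the full distributive law are available.
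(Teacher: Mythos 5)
Your proposal follows the same overall strategy as the paper: transport the hyperring structure from $R_f$ (resp.\ from a sub-hyperring of $K$) via Lemma \ref{setwithhyperaddtionisotohyperring}, run the classical Serre-type surjectivity argument using Lemmas \ref{nilradical} and \ref{generatingideal}, identify $\mathcal{O}_X(U)$ with $Y(U)\subseteq K$, and compute the stalk over the cofinal system of basic opens. The one genuinely different step is the surjectivity argument in part (1): you first note that $X$, hence every nonempty open, is irreducible, glue all local representatives into a single $u\in K$, and then clear denominators by the one-line computation $f^Nu\in\bigl(\sum_i r_ig_i^{n_i}\bigr)u=\sum_i r_ib_i\subseteq R$; this is legitimate because it uses only the single-element distributive axiom $x(y+z)=xy+xz$, not the doubly distributive law that fails for hyperrings. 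The paper instead stays inside the localizations, producing elements $\beta_j\in\sum_i b_ia_i$ with $f^na_j=\beta_jh_j$ and then checking $\beta_i=\beta_j$ pairwise; your route is shorter, at the cost of invoking irreducibility already in part (1) where the paper defers it to part (2). The one place where you are too quick is part (3): since direct limits need not exist in the category of hyperrings, the assertion that the colimit is realized as the union of the images inside $K$ requires verifying the universal property --- given compatible homomorphisms $\varphi_f:R_f\longrightarrow H$ one must define $\psi(a/b):=\varphi_b(a/b)$ and check well-definedness and the homomorphism property, using strictness of the inclusions so that the hyperaddition on the union agrees with that of each $R_f$. This check is routine (it is exactly what the paper carries out), so it is an omission rather than an error.
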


\begin{proof}
The proof is similar to the classical case (cf. \cite{Har}). We frequently use the results in \S \ref{lem}, in particular, Hilbert's Nullstellensatz of hyperrings (Lemma \ref{nilradical}).
\begin{enumerate}
\item
Consider the following map:
\begin{equation}\label{sheafproof1}
\psi: R_f \rightarrow \mathcal{O}_X(D(f)), \quad \frac{a}{f^n} \mapsto s, \textrm{ where } s(\mathfrak{p})=\frac{a}{f^n} \textrm{ in } R_\mathfrak{p}.
\end{equation}
Clearly, $\psi$ is well defined since the map $s$ defined as in \eqref{sheafproof1} satisfies the condition \eqref{sheafcondition}. It also follows from the definition that
\[\psi(\frac{a}{f^n}\cdot \frac{b}{f^m})=\psi(\frac{a}{f^n}) \cdot \psi(\frac{b}{f^m}), \quad \psi(\frac{a}{f^n} + \frac{b}{f^m}) \subseteq \psi(\frac{a}{f^n}) + \psi(\frac{b}{f^m}).\]
First, we claim that $\psi$ is one-to-one. Indeed, suppose that $\psi(\frac{a}{f^n})=\psi(\frac{b}{f^m})$. Then, $\frac{a}{f^n}=\frac{b}{f^m}$ as elements of $R_\mathfrak{p}$ $\forall \mathfrak{p} \in D(f)$.
Hence there is an element $h \not\in \mathfrak{p}$ such that $hf^ma=hf^nb$ in $R$. This implies that $0 \in hf^ma-hf^nb=h(f^ma-f^nb)$. However, since  $h \not\in \mathfrak{p}$ (hence,  $h \neq 0$) and $R$ is a hyperdomain, it follows that $f^ma=f^nb$. This implies that $\frac{a}{f^n}=\frac{b}{f^m}$ in $R_f$, thus $\psi$ is one-to-one.\\
Next, we claim that $\psi$ is onto. Take $s \in \mathcal{O}_X(D(f))$. Then, we can cover $D(f)$ with open sets $V_i$ so that $s$ is represented by a quotient $\frac{a_i}{g_i}$ on $V_i$ with $g_i \not\in \mathfrak{p}$ $\forall \mathfrak{p} \in V_i$ from \eqref{sheafcondition}. Since open subsets of the form $D(h)$ form a basis, we may assume that $V_i=D(h_i)$ for some $h_i \in R$. Let $(h_i)$ and $(g_i)$ be the hyperideals generated by $h_i$ and $g_i$. Since $s$ is represented by $\frac{a_i}{g_i}$ on $D(h_i)$, we have $D(h_i) \subseteq D(g_i)$; hence, $V((h_i)) \supseteq V((g_i))$. It follows from Lemma \ref{nilradical} that $\sqrt{(h_i)} \subseteq \sqrt{(g_i)}$ and hence $h_i^n \in (g_i)$ for some $n \in \mathbb{N}$. Then, from Lemma \ref{generatingideal}, we have $h_i^n=cg_i$ for some $c \in R$. Hence $\frac{a_i}{g_i}=\frac{ca_i}{h_i^n}$. If we replace $h_i$ by $h_i^n$ (since $D(h_i)=D(h_i^n)$) and $a_i$ by $ca_i$, we may assume that $D(f)$ is covered by the open subsets $D(h_i)$ on which $s$ is represented by $\frac{a_i}{h_i}$. Moreover, as in the classical case, we observe that $D(f)$ can be covered by finitely many $D(h_i)$. In fact,
\begin{equation}\label{d(f)openclosed}
D(f) \subseteq \bigcup D(h_i) \Longleftrightarrow V((f)) \supseteq \bigcap V((h_i)). 
\end{equation}
Let $I_i=(h_i)$,  $I=<I_i>$, and $J=(f)$. Then, \eqref{d(f)openclosed} can be written as follows:
\begin{equation}\label{d(f)below}
\bigcap V(I_i)=V(I), \quad D(f)\subseteq \bigcup D(h_i) \Longleftrightarrow V(J) \supseteq V(I).
\end{equation}
It follows from Lemma \ref{nilradical} that $\sqrt{J} \subseteq \sqrt{I}$, thus $f^n \in I$ for some $n \in \mathbb{N}$. Then, from Lemma \ref{generatingideal}, we have
\begin{equation}\label{finitecover}
f^n \in \sum_{i=1}^r b_ih_i \textrm{ for some } b_i \in R.
\end{equation}
We claim that $D(f)$ can be covered by $D(h_1) \cup ...\cup D(h_r)$. Indeed, this is equivalent to
\[V((f)) \supseteq \bigcap_{i=1}^r V((h_i))=V(<(h_i)>_{i=1,...,r}).\]
Let $I:=<h_i>_{i=1,...,r}$. Suppose that $\mathfrak{p} \in V(I)$. Since $I\subseteq \mathfrak{p}$, it follows from \eqref{finitecover} that $f^n \in \mathfrak{p}$, hence $f \in \mathfrak{p}$. This implies that $(f) \subseteq \mathfrak{p}$, thus $\mathfrak{p} \in V((f))$. From now on, we fix the elements $h_1,...,h_r$ such that $D(f) \subseteq D(h_1) \cup ... \cup D(h_r)$. Then, on $D(h_ih_j)$, we have two elements $\frac{a_i}{h_i}$, $\frac{a_j}{h_j}$ of $R_{h_ih_j}$ which can be considered as (the restriction of) the same element $s$. It follows from the injectivity of $\psi$ (on $D(h_ih_j)$) that one should have $\frac{a_i}{h_i}=\frac{a_j}{h_j}$ in $R_{h_ih_j}$. Therefore, $(h_ih_j)^nh_ja_i=(h_ih_j)^nh_ia_j$ for some $n \in \mathbb{N}$. However, since $R$ is a hyperdomain, we have $(h_ih_j)^n \neq 0$. It follows that $h_ja_i=h_ia_j$ $\forall i,j=1,...,r$ from the uniqueness of an additive inverse.\\
Let $f^n \in \sum_{i=1}^r b_ih_i$ as in \eqref{finitecover}. Then, for each $j\in\{1,...,r\}$, we have
\[f^na_j \in (\sum_i b_ih_i)a_j=\sum_i b_i a_jh_i=\sum_i b_ia_ih_j=(\sum_ib_ia_i)h_j.\]
It follows that for each $j=1,...,r$, there exists $\beta_j \in \sum_i b_ia_i$ such that $f^na_j=\beta_jh_j$. Hence, we have
\begin{equation}\label{onh_ih_j}
\frac{\beta_j}{f^n}=\frac{a_j}{h_j} \quad \textrm{ on } D(h_j).
\end{equation}
We claim that $\beta_i=\beta_j$ $\forall i,j=1,...,r$. Indeed, on $D(h_ih_j)$, we proved that $\frac{a_j}{h_j}=\frac{a_i}{h_i}$. Together with \eqref{onh_ih_j} and the injectivity of $\psi$, we have
\[ \frac{\beta_j}{f^n}=\frac{a_j}{h_j}=\frac{a_i}{h_i}= \frac{\beta_i}{f^n}  \textrm{ on } D(h_ih_j).\]
Therefore, $\exists m \in \mathbb{N}$ such that $(h_ih_j)^mf^n\beta_j=(h_ih_j)^mf^n\beta_i$.
Equivalently, we have $0 \in (h_ih_j)^mf^n(\beta_j-\beta_i)$. However, we know that $(h_ih_j)^m, f^n \neq 0$ since $h_ih_j, f \neq 0$ and $R$ is a hyperdomain. It follows that $0 \in \beta_i - \beta_j$, thus $\beta_i=\beta_j$ from the uniqueness of an additive inverse. Let $\beta$ be this common value $\beta_i$. Then, we have $f^na_j=\beta h_j$ $\forall j=1,...,r$. Therefore, $\frac{\beta}{f^n}=\frac{a_j}{h_j} \textrm{ on } D(h_j)$. In other words, $\psi(\frac{\beta}{f^n})=s$. This shows that $\psi$ is onto. This, however, does not complete the proof. We need to show that $\psi(ab)=\psi(a)\psi(b)$ and $\psi(a+b)=\psi(a)+\psi(b)$, then the result follows from Lemma \ref{setwithhyperaddtionisotohyperring}. Clearly, we have $\psi(ab)=\psi(a)\psi(b)$ and $ \psi(\frac{a}{f^n}+\frac{b}{f^m}) \subseteq \psi(\frac{a}{f^n})+\psi(\frac{b}{f^m})$.
We show the following:
\[ \psi(\frac{a}{f^n})+\psi(\frac{b}{f^m})\subseteq\psi(\frac{a}{f^n}+\frac{b}{f^m}). \]
Let $s=\psi(\frac{a}{f^n})$, $t=\psi(\frac{b}{f^m})$. Then, we have
\[ s+t=\{r \in \mathcal{O}_X(D(f)) \mid r(\mathfrak{p}) \in s(\mathfrak{p})+t(\mathfrak{p})\quad \forall \mathfrak{p} \in D(f) \}.\]
For $r \in s+t$, since $\psi$ is onto, $r=\psi(\frac{c}{f^l})$ for some $\frac{c}{f^l} \in R_f$.
It follows from $r(\mathfrak{p}) \in s(\mathfrak{p}) + t(\mathfrak{p})$ that $\frac{c}{f^l} \in \frac{a}{f^n} + \frac{b}{f^m}$ in $R_\mathfrak{p}$, and this is equivalent to the following:
\[\frac{c}{f^l}=\frac{d}{f^{n+m}} \textrm{ for some } d \in (af^m+bf^n) \textrm{ in } R_\mathfrak{p}.\]
Therefore, $ucf^{n+m}=udf^l$ for some $u \in R \backslash \mathfrak{p}$. Since $u \neq 0$, we have $cf^{n+m}=df^l$. Equivalently, $\frac{c}{f^l}=\frac{d}{f^{n+m}}$ in $R_f$. However, $\frac{d}{f^{n+m}} \in \frac{a}{f^n}+\frac{b}{f^m}$, therefore $s+t \subseteq \psi(\frac{a}{f^n}+\frac{b}{f^m})$. This shows the other inclusion. The conclusion follows from Lemma \ref{setwithhyperaddtionisotohyperring}.
\item
One can easily see that $Y(U)$ is a hyperring (in fact, a sub-hyperring of $K$). We show that there exists a bijection $\varphi$ of sets from $\mathcal{O}_X(U)$ to $Y(U)$ such that $\varphi(a+b)=\varphi(a)+\varphi(b)$ and $\varphi(ab)=\varphi(a)\varphi(b)$. Then, the first assertion will follow from Lemma \ref{setwithhyperaddtionisotohyperring}. Indeed, if $s \in \mathcal{O}_X(U)$, then from the same argument (in the proof of $1$), we can find a cover $U=\bigcup D(h_i)$ such that
$s=\frac{a_i}{h_i}$ on $D(h_i)$. However, since $R$ has no (multiplicative) zero-divisor, $X=\Spec R$ is irreducible from Proposition \ref{irreducible}. Thus, $D(h_i) \cap D(h_j) \neq \emptyset$ $\forall i,j$. It follows that $\frac{a_i}{h_i}=\frac{a_j}{h_j}$ on $D(h_i) \cap D(h_j)$,  
equivalently $0 \in s_{ij}(a_ih_j - h_ia_j)$ for some $s_{ij} \neq 0 \in R$. Since $s_{ij} \neq 0$ and R is a hyperdomain, it follows that
\[0 \in (a_ih_j - h_ia_j) \Longleftrightarrow a_ih_j=a_jh_i \Longleftrightarrow \frac{a_i}{h_i}=\frac{a_j}{h_j} \textrm{ as elements of } K=\Frac (R).\]
Let $u=\frac{a}{b}$ be this common value in $K$. Then, for each $\mathfrak{p} \in U$, we have $\mathfrak{p} \in D(h_i)$ for some $h_i \not \in \mathfrak{p}$ and $\frac{a}{b}=\frac{a_i}{h_i}$ on $D(h_i)$. It follows that $u \in Y(U)$. Since $\mathcal{O}_X$ is a sheaf, $u \in Y(U)$ is uniquely determined by $s$. We let $\varphi(s):=u$. Then, we have 
\[ \varphi: \mathcal{O}_X(U) \longrightarrow Y(U):=\{u \in K\mid \forall \mathfrak{p} \in U, u=\frac{a}{b}, b \not \in \mathfrak{p}\} \subseteq K.\]
$\varphi$ is well defined and one-to-one since $\mathcal{O}_X$ is a sheaf (of sets). We claim that $\varphi$ is onto. In fact, for $u=\frac{a}{b} \in Y(U)$, we define $s:U \longrightarrow \bigsqcup_{\mathfrak{p} \in U}R_\mathfrak{p}$ such that $s(\mathfrak{p})=\frac{a}{b}=\frac{a'}{b'}$ for $b' \not \in \mathfrak{p}$ from the definition of $Y(U)$. Then $s \in \mathcal{O}_X(U)$. Next, it follows from the definition that $\varphi(s\cdot t)=\varphi(s)\cdot \varphi(t)$. Furthermore, we have $\varphi(s+t) \subseteq \varphi(s)+\varphi(t)$.
Indeed, we have
\begin{equation}\label{ains+t}
\alpha \in s+t \Longleftrightarrow \alpha(\mathfrak{p}) \in s(\mathfrak{p})+t(\mathfrak{p}) \quad \forall \mathfrak{p} \in U.
\end{equation}
However, since $\varphi$ is bijective, each section is globally represented by an element of $K$. Suppose that $\alpha, s, t$ are globally represented by $\frac{g}{f},\frac{a}{h},\frac{b}{m}$ respectively. Then, \eqref{ains+t} is equivalent to the following:
\[\alpha(\mathfrak{p}) \in s(\mathfrak{p})+t(\mathfrak{p}) \Longleftrightarrow \frac{g}{f} \in \frac{a}{h}+\frac{b}{m} \Longleftrightarrow \varphi(\alpha) \in \varphi(s)+\varphi(t).\]
Conversely, for $\frac{g}{f} \in \varphi(s)+\varphi(t)=\frac{a}{h}+\frac{b}{m}$, we have $\alpha \in \mathcal{O}_X(U)$ such that $\alpha(\mathfrak{p})=\frac{g}{f}$ at $R_\mathfrak{p}$. It follows that $\alpha \in s+t$, and $\varphi(s)+\varphi(t) \subseteq \varphi(s+t)$.
This shows that $\mathcal{O}_X(U)$ is isomorphic to $Y(U)$ via $\varphi$ from Lemma \ref{setwithhyperaddtionisotohyperring}.\\ 
Next, we prove the second assertion. For $D(f) \subseteq U$, we have $Y(U) \subseteq Y(D(f)) \subseteq K$.
This implies that $Y(U) \subseteq \bigcap_{D(f) \subseteq U}Y(D(f))$.
Conversely, suppose that
$u=\frac{a}{b} \in \bigcap_{D(f)\subseteq U} Y(D(f))$. Then, for each $\mathfrak{p} \in U$, we have $\mathfrak{p} \in D(f)$ for some $D(f)\subseteq U$. Since $u \in Y(D(f))$, $u$ can be written as $\frac{x}{y}$ so that $y \not \in  \mathfrak{p}$. It follows that $u \in Y(U)$, and $Y(U)=\bigcap_{D(f)\subseteq U}Y(D(f))$. One observes that $Y(D(f))=R_f \subseteq K$. Thus, under $\mathcal{O}_X(D(f))\simeq R_f$, we have $\mathcal{O}_X(U)\simeq \bigcap_{D(f)\subseteq U}\mathcal{O}_X(D(f))$.
\item
In general, direct limits do not exist in the category of hyperrings. Thus, one should be careful. Since open sets of the form $D(f)$ form a basis of $X$, it is enough to show that 
\begin{equation}\label{stakpro}
\varinjlim_{D(f) \ni \mathfrak{p}} \mathcal{O}_X(D(f))=R_\mathfrak{p}.
\end{equation}
For each $f \in R$, let $\psi_f:\mathcal{O}_X(D(f)) \longrightarrow R_\mathfrak{p}$, $s \mapsto s(\mathfrak{p})$. Then, we have the following commutative diagram:
\[
\xymatrixcolsep{3pc}\xymatrix{
\mathcal{O}_X(D(f)) \ar@{->}[rd]_{\psi_f} \ar@{->}[rr]^{\rho}
&
&\mathcal{O}_X(D(g)) \ar@{->}[ld]^{\psi_g}\\
&R_\mathfrak{p}},
\]
where $\rho$ is a restriction map of the structure sheaf $\mathcal{O}_X$. Let $H$ be a hyperring and suppose that we have another commutative diagram:
\[
\xymatrixcolsep{3pc}\xymatrix{
\mathcal{O}_X(D(f)) \ar@{->}[rd]^{\psi_f} \ar@{->}[rr]^{\rho}
\ar@{->}[rdd]_{\varphi_f}
&
&\mathcal{O}_X(D(g)) \ar@{->}[ld]_{\psi_g} \ar@{->}[ldd]^{\varphi_g}\\
&R_\mathfrak{p}\\
&H}.
\] 
By considering $R_\mathfrak{p}$ as a sub-hyperring of $K$, let us define the map $\psi$ as follows: 
\[\psi:R_\mathfrak{p} \longrightarrow H, \quad \frac{b}{t} \mapsto \varphi_t(\frac{b}{t}),\]
where $\frac{b}{t}$ is considered as an element of $\mathcal{O}_X(D(t))$ such that $\frac{b}{t}(\mathfrak{q})=\frac{b}{t}$ in $R_\mathfrak{q}$ for each $\mathfrak{q} \in D(t)$. If $\psi$ is a well-defined homomorphism of hyperrings, the uniqueness of such map easily follows. Indeed, suppose that $\varphi:R_\mathfrak{p} \longrightarrow H$ is the homomorphism of hyperrings such that $\varphi_f=\varphi\circ \psi_f$ $\forall f \in R$ for $\mathfrak{p} \in D(f)$. A section $s$ of $\mathcal{O}_X(D(f))$ is represented by $\frac{b}{f^n}$ (from the first part of the proposition). Therefore, $\psi_f(s)=\psi_f(\frac{b}{f^n})$, and $\varphi_f(s)=\varphi\circ\psi_f(s)=\varphi\circ\psi_f(\frac{b}{f^n})=\varphi(\frac{b}{f^n})$. However, we have $\varphi_f(s)=\psi \circ \psi_f(s)=\psi(\frac{b}{f^n})$. Thus, such $\psi$ is unique if it exists.\\ 
Next, we show that $\psi$ is well defined. Indeed, if $\frac{b}{t}=\frac{b'}{t'}$, then we have $bt'=b't$ since $R$ is a hyperdomain. It follows that $D(bt')=D(b't) \subseteq D(t)$, and we have the following commutative diagram: 
\[
\xymatrixcolsep{3pc}\xymatrix{
\mathcal{O}_X(D(t)) \ar@{->}[rd]_{\varphi_t} \ar@{->}[rr]^{\rho}
&
&\mathcal{O}_X(D(b't))=\mathcal{O}_X(D(bt')) \ar@{->}[ld]^{\varphi_{b't}}\\
&H}.
\]
From the similar argument with $t'$, we have $\varphi_t(\frac{b}{t})=\varphi_{t'}(\frac{b'}{t'})$. This shows that $\psi$ does not depend on the choice of $t$, hence $\psi$ is well defined. For $\frac{a}{f}$, $\frac{b}{g} \in R_\mathfrak{p}$, by considering $\varphi_{fg}$, we have $\psi(\frac{a}{f})=\varphi_{fg}(\frac{a}{f})$, $\psi(\frac{b}{g})=\varphi_{fg}(\frac{b}{g})$, and $\psi(\frac{ab}{fg})=\varphi_{fg}(\frac{ab}{fg})$. Thus, $\psi(\frac{a}{f}\frac{b}{g})=\psi(\frac{a}{f})\psi(\frac{b}{g})$. Similarly, we have $\psi(\frac{a}{f}+\frac{b}{g}) \subseteq \psi(\frac{a}{f})+\psi(\frac{b}{g})$. Hence, $\psi$ is a homomorphism of hyperrings. Finally, since $\{D(f)\}$ is a basis of $X$, we have
\[\varinjlim_{U \ni \mathfrak{p}}\mathcal{O}_X(U)=\varinjlim_{D(f) \ni \mathfrak{p}} \mathcal{O}_X(D(f)).\]
Therefore, we conclude that $\mathcal{O}_{X,\mathfrak{p}}\simeq R_\mathfrak{p}$.
\end{enumerate}
\end{proof}

\noindent When $R$ is a hyperdomain, we call the pair $(X=\Spec R, \mathcal{O}_X)$ as in Theorem \ref{sheaf} an integral affine hyperring scheme. The following example shows that if $R$ has zero divisors, then in general $R \neq \Gamma(X,\mathcal{O}_X)$.

\begin{myeg}\label{sheafnoexample}
Consider the following quotient hyperring $R$:
\[ R=\mathbb{Q} \oplus \mathbb{Q} /G, \textrm{ where } G=\{(1,1),(-1,-1)\}.\]
Then, $\Spec R=\{\mathfrak{p}_1,\mathfrak{p}_2\}$ with $\mathfrak{p}_1=<[(1,0)]>$ and $\mathfrak{p}_2=<[(0,1)]>$. Each $\mathfrak{p}_j$ becomes one point open and closed subset of $X$ and the intersection $\mathfrak{p}_1 \cap \mathfrak{p}_2$ is empty. Furthermore, one can easily check that $R_{\mathfrak{p}_i} \simeq \mathbb{Q}/H$, where $H=\{1,-1\}$. Therefore,
\[\Gamma(X,\mathcal{O}_X)\simeq (\mathbb{Q}/H) \oplus (\mathbb{Q}/H) \neq R.\]
\end{myeg}

\begin{rmk}
\begin{enumerate}
\item
One can construct other examples of affine hyperring schemes $X=\Spec R$ for which $R \neq \Gamma(X,\mathcal{O}_X)$, but all such examples are disconnected. We do not have yet any example of a connected topological space $X=\Spec R$ with $R \neq \Gamma(X,\mathcal{O}_X)$. On the other hand, being connected is not a necessary condition. In fact, let $A=\mathbb{Z}/12\mathbb{Z}$ and $G=\{1,5\} \subseteq (\mathbb{Z}/12\mathbb{Z})^{\times}$. Then, with the quotient hyperring $R=A/G$, the space $X=\Spec R$ is disconnected (consist of two points), however, one can easily check that $R \simeq \Gamma(X,\mathcal{O}_X)$.
\item
What seems interesting is that by enhancing a multiplicative condition (hyperdomain) we could fix an additive weakness (multi-valued addition). In fact, similar observation has been made by O.Lorscheid in \cite{oliver1} stating that in the classical construction of schemes, a multiplicative structure plays a major role while an additive structures is less important. 
\end{enumerate}
\end{rmk}

\noindent Let $R$ be a hyperring, $X=\Spec R$, and $\mathcal{O}_X$ be the structure sheaf (of sets) of $X$. Then, as we previously mentioned in Remark \ref{structuresheafsubtle}, $\Gamma(X,\mathcal{O}_X)$ does not have to be a hyperring. Moreover, even if $\Gamma(X,\mathcal{O}_X)$ is a hyperring, Example \ref{sheafnoexample} shows that the natural map $R\longrightarrow \Gamma(X,\mathcal{O}_X)$ is not even an injective map in general. By appealing to the classical construction of Cartier divisors, we define the presheaf $\mathcal{F}_X$ of hyperrings on $X=\Spec R$ which slightly generalizes $\mathcal{O}_X$ (cf. Remark \ref{OFsame} and Proposition \ref{Fgen}).\\
Let $S:=\{\alpha \in R\mid \alpha\textrm{ is not a zero-divisor}\}$. In other words, $S$ is the set of regular elements of $R$. Then, $S\neq \emptyset$ since $1 \in S$. Furthermore, $S$ is a multiplicative subset of $R$, therefore one can define $K:=S^{-1}R$. In what follows, we denote by $R$ a hyperring, and $S$, $K$ as above. Note that by a sub-hyperring $R$ of a hyperring $L$ we mean a subset $R$ of $L$ which is a hyperring with the induced operations.

\begin{lem}\label{regulareltebd}
Let $S$ be the set as above, and $f \in S$. Let $\varphi:R \longrightarrow R_f$, $\varphi(a)=\frac{a}{1}$ be the natural map of localization and $\psi:R_f \longrightarrow K:=S^{-1}R$ be a homomorphism of hyperrings such that $\psi(\frac{a}{f^n})=\frac{a}{f^n}$. Then $\varphi$ and $\psi$ are strict, injective homomorphisms of hyperrings.
\end{lem}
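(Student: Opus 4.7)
The plan is to verify four things: well-definedness of $\psi$, and then injectivity plus strictness for each of $\varphi$ and $\psi$. The proofs reduce to careful bookkeeping with the localization formulas, combined with the single observation that any power of a regular element is again regular (so in particular nonzero, and cancellative).

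For $\varphi$, injectivity is immediate: if $\varphi(a) = \tfrac{a}{1} = \tfrac{0}{1}$ in $R_f$, then by the equivalence relation \eqref{eqrel} there is some $f^n$ with $f^n \cdot a \cdot 1 = f^n \cdot 0 \cdot 1$, i.e.\ $f^n a = 0$; since $f \in S$, $f^n$ is a non-zero-divisor, forcing $a = 0$. For strictness, using the localization formula for addition,
\[
\varphi(a) + \varphi(b) \;=\; \tfrac{a}{1} + \tfrac{b}{1} \;=\; \bigl\{\tfrac{y}{1} \mid y \in a + b\bigr\} \;=\; \bigl\{\varphi(y) \mid y \in a+b\bigr\} \;=\; \varphi(a+b),
\]
which is the required equality rather than a containment.

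For $\psi$, the first task is well-definedness. If $\tfrac{a}{f^n} = \tfrac{b}{f^m}$ in $R_f$, then by \eqref{eqrel} there exists $k \in \mathbb{N}$ with $f^k f^m a = f^k f^n b$, i.e.\ $f^{k+m} a = f^{k+n} b$ in $R$. Since $f^{k} \in S$, the pair $(a, f^n)$ and $(b, f^m)$ are also related by \eqref{eqrel} in $K = S^{-1}R$, so $\tfrac{a}{f^n} = \tfrac{b}{f^m}$ in $K$ and $\psi$ is well defined. Injectivity of $\psi$ is analogous to that of $\varphi$: if $\tfrac{a}{f^n} = \tfrac{0}{1}$ in $K$, then some $s \in S$ satisfies $s \cdot 1 \cdot a = 0$, and regularity of $s$ gives $a = 0$, whence $\tfrac{a}{f^n} = 0$ already in $R_f$.

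Strictness of $\psi$ comes again from a direct formula check:
\[
\psi\bigl(\tfrac{a}{f^n}\bigr) + \psi\bigl(\tfrac{b}{f^m}\bigr) \;=\; \bigl\{\tfrac{y}{f^{n+m}} \mid y \in af^m + bf^n\bigr\} \;=\; \psi\bigl(\tfrac{a}{f^n} + \tfrac{b}{f^m}\bigr),
\]
where on the right we have used that the hyperaddition in $R_f$ is described by the same set $\{\tfrac{y}{f^{n+m}} \mid y \in af^m + bf^n\}$, onto which $\psi$ acts as the identity on representatives. There is no real obstacle here; the only subtlety worth flagging is that both the well-definedness of $\psi$ and the injectivity arguments hinge on the fact that $S$ consists of non-zero-divisors (so that cancellation is valid and $f^n \neq 0$), and this is the one place where the hypothesis on $S$ is genuinely used.
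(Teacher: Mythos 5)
Your overall strategy is the same as the paper's: direct verification from the localization formulas, with the regularity of $f^n$ (resp.\ of elements of $S$) doing the real work. The strictness and well-definedness arguments are fine, and you are in fact more explicit than the paper about $\psi$ (the paper proves only the case of $\varphi$ and declares $\psi$ ``similar'').

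There is, however, one genuine gap: for both $\varphi$ and $\psi$ you prove only that the kernel is trivial ($\varphi(a)=0\Rightarrow a=0$) and then declare injectivity ``immediate.'' In the category of hyperrings this implication is not automatic: for a general (non-strict) homomorphism, $\varphi(a)=\varphi(b)$ gives $0\in\varphi(a)-\varphi(b)$, but only $\varphi(a-b)\subseteq\varphi(a)-\varphi(b)$, so you cannot conclude $0\in\varphi(a-b)$ and hence cannot reduce to the kernel. The paper sidesteps this by arguing directly: $\tfrac{a}{1}=\tfrac{b}{1}$ forces $f^na=f^nb$, hence $0\in f^na-f^nb=f^n(a-b)$, so some $c\in a-b$ satisfies $f^nc=0$; regularity gives $c=0$, and then $0\in a-b$ yields $a=b$ by uniqueness of additive inverses. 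Your argument can be repaired either by running this direct computation or by observing that trivial kernel together with the strictness you have already established does imply injectivity (since then $\varphi(a-b)=\varphi(a)-\varphi(b)\ni 0$); but as written, ``injectivity is immediate from the kernel'' is exactly the step that needs justification in the hyperring setting.
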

\begin{proof}
We only prove the case of $\varphi$ since the proof for $\psi$ is similar. If $\frac{a}{1}=\frac{b}{1}$ then $f^na=f^nb$ for some $n \in \mathbb{N}$. This implies that $0 \in f^n(a-b)$. Hence, we have $c \in (a-b)$ such that $0=f^nc$. Since $f^n \in S$ can not be a zero-divisor, we have $c=0$, therefore $a=b$. This shows that $\varphi$ is injective. Furthermore, if $\gamma \in \varphi(a)+\varphi(b)=\frac{a}{1}+\frac{b}{1}$, then $\gamma=\frac{t}{1}$ for some $t \in a+b$. Therefore $\gamma=\varphi(t)$, and $\varphi$ is strict.
\end{proof}

\noindent For each open subset $U$ of $X=\Spec R$, we define the following set:
\begin{equation}\label{definofF}
\mathcal{F}_X(U):=\{u \in K\mid \forall \mathfrak{p} \in U, u=\frac{a}{b}, b \in S\cap \mathfrak{p}^c\}.
\end{equation} 
In other words, $u \in K$ is an element of $\mathcal{F}_X(U)$ if $u$ has a representative $\frac{a}{b}$ such that $b \not \in \mathfrak{p}$ for each $\mathfrak{p} \in U$. The restriction map is given by the natural injection. i.e. if $V \subseteq U$, then we have $\mathcal{F}_X(U) \hookrightarrow \mathcal{F}_X(V)$. Then, one can easily observe that $\mathcal{F}_X(U)$ is a hyperring. Thus, $\mathcal{F}_X$ becomes a presheaf of hyperrings on $X=\Spec R$.

\begin{rmk}\label{OFsame}
It follows from Theorem \ref{sheaf} that when $R$ is a hyperdomain, we have $\mathcal{O}_X(U) \simeq \mathcal{F}_X(U)$ for each open subset $U$ of $X$. Therefore, in this case, $\mathcal{F}_X$ is indeed a sheaf of hyperring and $\mathcal{F}_X(X)=R$.
\end{rmk}

\begin{pro} \label{sheaffirre}
Let $R$ be a hyperring. If $X=\Spec R$ is irreducible, then $\mathcal{F}_X$ is a sheaf of hyperrings.
\end{pro}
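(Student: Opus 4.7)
The approach is to exploit that $\mathcal{F}_X$ is a sub-presheaf of the constant presheaf with value $K$. Indeed, for any inclusion of open sets $V \subseteq U$, the defining condition in \eqref{definofF} is imposed at strictly fewer points, so $\mathcal{F}_X(U) \subseteq \mathcal{F}_X(V) \subseteq K$, and the restriction map is simply this inclusion. In particular each $\mathcal{F}_X(U)$ is a sub-hyperring of $K$ and each restriction is a strict injective homomorphism of hyperrings (by Lemma \ref{regulareltebd} and the construction). So once the sheaf axioms hold at the level of underlying sets, they automatically hold at the level of hyperrings.

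The identity axiom is then immediate: if $s,t \in \mathcal{F}_X(U)$ agree on every member of a cover $\{U_i\}$ of $U$, then $s = t$ inside $K$, because restriction is literally the identity on $K$. The only real content is the gluing axiom. Suppose $\{U_i\}_{i \in I}$ is an open cover of $U$ and $s_i \in \mathcal{F}_X(U_i)$ satisfy $s_i|_{U_i \cap U_j} = s_j|_{U_i \cap U_j}$. Discarding the indices with $U_i = \emptyset$, we may assume every $U_i$ is nonempty. Here is where irreducibility of $X$ enters: any nonempty open subset of an irreducible space is dense, so for all $i, j$ we have $U_i \cap U_j \neq \emptyset$. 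Pick any $\mathfrak{q} \in U_i \cap U_j$; then $s_i$ and $s_j$ represent the same element of $\mathcal{F}_X(U_i \cap U_j) \subseteq K$, hence $s_i = s_j$ as elements of $K$. Thus there is a common value $u \in K$ with $s_i = u$ for all $i$.

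It remains to check that $u \in \mathcal{F}_X(U)$. For any $\mathfrak{p} \in U$, choose $i$ with $\mathfrak{p} \in U_i$; since $u = s_i \in \mathcal{F}_X(U_i)$, by \eqref{definofF} one can write $u = a/b$ with $b \in S$ and $b \notin \mathfrak{p}$. This verifies the condition in \eqref{definofF} at $\mathfrak{p}$, so $u \in \mathcal{F}_X(U)$, and by construction $u|_{U_i} = s_i$ for each $i$.

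The main obstacle, such as it is, lies in the gluing step, where one needs to know that compatible sections $s_i$ really do represent a \emph{single} element of $K$ rather than a family of distinct elements agreeing only on overlaps. This is exactly what irreducibility buys: pairwise intersections of nonempty opens are nonempty, which forces the $s_i$ to coincide in $K$. In a reducible $\Spec R$ one can have disjoint nonempty opens $U_1, U_2$ carrying distinct elements of $K$, and no common $u \in K$ need exist — this is consistent with Example \ref{sheafnoexample}, and explains why the hypothesis of irreducibility (equivalently, by Proposition \ref{irreducible}, that $\mathrm{Nil}(R)$ is prime) is essential.
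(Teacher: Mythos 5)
Your proof is correct and follows essentially the same route as the paper's: the restriction maps of $\mathcal{F}_X$ are inclusions inside $K$, so the identity axiom is automatic, and irreducibility forces pairwise intersections of nonempty opens to be nonempty, whence compatible sections coincide as a single element of $K$ which visibly satisfies the defining condition \eqref{definofF} on all of $U$. Your write-up is in fact slightly more careful than the paper's in explicitly verifying that the glued element lies in $\mathcal{F}_X(U)$ and in explaining where irreducibility is used.
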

\begin{proof}
Since $\mathcal{F}_X(U)$ is clearly a hyperring, we only have to prove that $\mathcal{F}_X$ is a sheaf. Suppose that $U=\bigcup V_i$ is an open covering of $U$. Firstly, if $s \in \mathcal{F}_X(U)$ is an element such that $s|_{V_i}=0$ for all $i$, then we have to show that $s=0$. However, this is clear since the restriction map is injective. Secondly, let $s_i \in \mathcal{F}_X(V_i)$ such that $s_i|_{V_i \cap V_j}=s_j|_{V_i \cap V_j}$ for all $i,j$. Since $X$ is irreducible, it follows that $V_i \cap V_j \neq \emptyset$ $\forall i,j$. Moreover, the condition $s_i|_{V_i \cap V_j}=s_j|_{V_i \cap V_j}$ means that $s_i=s_j$ as elements of $K$. Let $s$ be this common element of $K$. Then, $\{s_i\}$ can be glued to $s$. Clearly, $s$ is an element of $\mathcal{F}_X(U)$.
\end{proof}

\noindent The following proposition shows that $\mathcal{F}_X$ behaves more nicely than $\mathcal{O}_X$ in some cases. 

\begin{pro}\label{Fgen}
Let $R$ be a hyperring and assume that $X=\Spec R$ is irreducible. Then, for $f \in S$, there exists a canonical injective and strict homomorphism $\varphi:R_f \longrightarrow \mathcal{F}_X(D(f))$. In particular, $R$ is a sub-hyperring of $\mathcal{F}_X(X)$. Furthermore, if $R$ has a unique maximal hyperideal, then $R\simeq \mathcal{F}_X(X)$.
\end{pro}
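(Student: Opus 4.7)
The plan is to realize both $R_f$ and $\mathcal{F}_X(D(f))$ as sub-hyperrings of the common ambient hyperring $K = S^{-1}R$ and then to identify $\varphi$ with the inclusion. The first preparatory step is to verify that $\mathcal{F}_X(U)$ really is a sub-hyperring of $K$ for any open $U \subseteq X$: for $u_i = a_i/b_i$ in $\mathcal{F}_X(U)$, the common denominator $b_1 b_2$ still lies in $S \cap \mathfrak{p}^c$ for every $\mathfrak{p} \in U$ by multiplicative closedness of $S$ and primality of each $\mathfrak{p}$, so both $u_1 u_2$ and every element of $u_1+u_2$ (computed inside $K$) belong to $\mathcal{F}_X(U)$. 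Consequently the hyperoperations of $\mathcal{F}_X(U)$ are literally those inherited from $K$.

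For the first half of the statement, Lemma \ref{regulareltebd} already provides a strict injective composition $R_f \hookrightarrow K$, so I only have to show that its image lies inside $\mathcal{F}_X(D(f))$. A typical image element is $a/f^n$; for any $\mathfrak{p} \in D(f)$ we have $f \notin \mathfrak{p}$, hence $f^n \notin \mathfrak{p}$ by primality, while $f^n \in S$ since $S$ is multiplicatively closed. Thus $a/f^n$ admits the representative demanded by \eqref{definofF}. Because both $R_f$ and $\mathcal{F}_X(D(f))$ inherit their hyperoperations from $K$, the corestriction $\varphi: R_f \to \mathcal{F}_X(D(f))$ of the strict inclusion $R_f \hookrightarrow K$ is itself a strict, injective homomorphism of hyperrings; specializing $f = 1$ yields the claim that $R$ is a sub-hyperring of $\mathcal{F}_X(X)$.

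For the isomorphism in the local case, let $\mathfrak{m}$ be the unique maximal hyperideal of $R$. I already have $R \subseteq \mathcal{F}_X(X) \subseteq K$, so it remains to prove the reverse inclusion. Pick $u \in \mathcal{F}_X(X)$; since $\mathfrak{m} \in X$, the definition \eqref{definofF} furnishes a representative $u = a/b$ with $b \in S$ and $b \notin \mathfrak{m}$. If $b$ were not a unit of $R$, then the principal hyperideal $bR$ (cf. Lemma \ref{generatingideal}) would be proper, hence contained in some maximal hyperideal, which by uniqueness is $\mathfrak{m}$; this would force $b \in \mathfrak{m}$, contradicting the choice of representative. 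So $b$ is a unit of $R$, whence $u = a b^{-1} \in R$ inside $K$, giving $\mathcal{F}_X(X) \subseteq R$.

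The one step that demands genuine care is the sub-hyperring identification in the preparatory paragraph, since the strictness of $\varphi$ depends on it: only after confirming that the hyperaddition on $\mathcal{F}_X(D(f))$ is literally the restriction of the one on $K$ can strictness be transferred from Lemma \ref{regulareltebd}. The membership check $a/f^n \in \mathcal{F}_X(D(f))$ and the maximality argument in the final paragraph are then essentially immediate.
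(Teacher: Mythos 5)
Your proposal is correct and follows essentially the same route as the paper: embed $R_f$ into $K$ via Lemma \ref{regulareltebd}, observe that the image lands in $\mathcal{F}_X(D(f))$, and in the local case show the denominator $b$ of a representative is a unit. The only (minor) divergence is at that last step, where the paper reaches $b\in R^\times$ via Lemma \ref{jacobsonradical} (writing $1\in a+b$ with $a\in\mathfrak{m}=J(R)$ and invoking reversibility), whereas you argue that a non-unit $b$ would generate a proper hyperideal $bR$ contained in the unique maximal hyperideal $\mathfrak{m}$, contradicting $b\notin\mathfrak{m}$ — an equally valid and arguably more direct argument; your explicit verification that $\mathcal{F}_X(U)$ is closed under the $K$-hyperoperations (which the paper leaves as an easy observation) is also a worthwhile addition, since strictness of $\varphi$ does rest on it.
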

\begin{proof}
From Lemma \ref{regulareltebd}, there exists a canonical injective and strict homomorphism $\psi: R_f \longrightarrow K$. From the definition of $\mathcal{F}_X(D(f))$, one sees that the image of $\psi$ lies in $\mathcal{F}_X(D(f)) \subseteq K$. Therefore, $\psi$ becomes our desired $\varphi$. When $R$ has a unique maximal hyperideal, we have to show that any element $u$ of $\mathcal{F}_X(X)$ is of the form $\frac{a}{1}$ for some $a \in R$. Suppose that $\mathfrak{m}$ is the maximal ideal of $R$. Then, $u \in \mathcal{F}_X(X)$ means that $u=\frac{a}{b}$ for some $b \in S-\mathfrak{m}$. Since $\mathfrak{m}$ is the only maximal hyperideal of $R$, it follows from Lemma \ref{generatingideal} that $1 \in a+b$ for some $a \in \mathfrak{m}$. Therefore, $b \in 1-a$ and $b$ is a unit by Lemma \ref{jacobsonradical}. Thus, $u=\frac{a}{b}=\frac{ab^{-1}}{1}$ and $R\simeq \mathcal{F}_X(X)$.
\end{proof}

\noindent Next, we prove that the opposite category of hyperdomains and the category of integral affine hyperring schemes are equivalent via the contravariant functors, $\Spec$ and $\Gamma$. Note that one can directly generalize the notion of a ringed space to define a hyperringed space. However, the notion of a locally hyperringed space should be treated with greater care since the category of hyperrings does not have (co)limits in general. Nevertheless, an integral affine hyperring scheme $(X,\mathcal{O}_X)$ can be considered as a locally hyperringed space thanks to Theorem \ref{sheaf}. Thus, in what follows we consider $(X,\mathcal{O}_X)$ as a locally hyperringed space in the sense of the direct generalization of the classical notion. We will simply write $X$ instead of $(X,\mathcal{O}_X)$ if there is no possible confusion. The following lemma has been proven in \cite{Dav1} and \cite{Rota}, and will be mainly used.  

\begin{lem}(\cite[Theorem 3.6]{Dav1}, \cite[Proposition 8]{Rota})\label{lochyp}
Let $\varphi:R \longrightarrow H$ be a homomorphism of hyperrings. Then, for $\mathfrak{p} \in \Spec H$, $\varphi$ induces the following homomorphism $\varphi_\mathfrak{p}$ of hyperrings:
\[\varphi_\mathfrak{p} :R_\mathfrak{q} \longrightarrow H_\mathfrak{p}, \quad \frac{a}{b} \mapsto \frac{\varphi(a)}{\varphi(b)},\quad \textrm{ where $\mathfrak{q}=\varphi^{-1}(\mathfrak{p})$ } \] 
such that if $\mathfrak{m}_\mathfrak{p},\mathfrak{m}_\mathfrak{q}$ are unique maximal hyperideals of $H_\mathfrak{p}$ and $R_\mathfrak{q}$ respectively, then $\varphi_\mathfrak{p}^{-1}(\mathfrak{m}_\mathfrak{p})=\mathfrak{m}_\mathfrak{q}$.
\end{lem}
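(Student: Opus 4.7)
The plan is to follow the classical template in three stages, tracking the multi-valued features of the hyperaddition. First I would verify that $\mathfrak{q} := \varphi^{-1}(\mathfrak{p})$ is a prime hyperideal of $R$, so that $R_\mathfrak{q}$ is defined. The hyperideal property reduces to showing $a - rb \subseteq \mathfrak{q}$ for $a, b \in \mathfrak{q}$ and $r \in R$, which follows by applying $\varphi$ elementwise and using $\varphi(a - rb) \subseteq \varphi(a) - \varphi(r)\varphi(b) \subseteq \mathfrak{p}$. Primality is immediate from multiplicativity of $\varphi$ together with primality of $\mathfrak{p}$, and $\mathfrak{q} \neq R$ because $\varphi(1) = 1 \notin \mathfrak{p}$.

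Next I would verify that $\varphi_\mathfrak{p}$ is a well-defined homomorphism. The key observation is that $r \notin \mathfrak{q}$ forces $\varphi(r) \notin \mathfrak{p}$, so $\varphi$ sends the submonoid $R \setminus \mathfrak{q}$ into $H \setminus \mathfrak{p}$, and the formula $\frac{a}{b} \mapsto \frac{\varphi(a)}{\varphi(b)}$ makes sense. Well-definedness then follows by applying $\varphi$ to the equation $xab' = xa'b$ witnessing the equivalence \eqref{eqrel}. Multiplicativity is obvious from $\varphi(ab) = \varphi(a)\varphi(b)$, and for the hyperaddition I would check the containment $\varphi_\mathfrak{p}(\frac{a_1}{b_1} + \frac{a_2}{b_2}) \subseteq \varphi_\mathfrak{p}(\frac{a_1}{b_1}) + \varphi_\mathfrak{p}(\frac{a_2}{b_2})$ by noting that every $y \in a_1 b_2 + a_2 b_1$ satisfies $\varphi(y) \in \varphi(a_1)\varphi(b_2) + \varphi(a_2)\varphi(b_1)$, since $\varphi$ is a hypergroup homomorphism.

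For the identification $\varphi_\mathfrak{p}^{-1}(\mathfrak{m}_\mathfrak{p}) = \mathfrak{m}_\mathfrak{q}$, I would invoke Proposition \ref{localhyperring}(3) to realize the two maximal hyperideals as the localizations of $\mathfrak{p}$ and $\mathfrak{q}$ respectively. The inclusion $\mathfrak{m}_\mathfrak{q} \subseteq \varphi_\mathfrak{p}^{-1}(\mathfrak{m}_\mathfrak{p})$ is immediate from the fact that $a \in \mathfrak{q}$ implies $\varphi(a) \in \mathfrak{p}$. For the reverse inclusion, if $\frac{\varphi(a)}{\varphi(b)} = \frac{p}{s}$ in $H_\mathfrak{p}$ for some $p \in \mathfrak{p}$ and $s \in H \setminus \mathfrak{p}$, then the witnessing relation $xs\varphi(a) = xp\varphi(b)$ for some $x \in H \setminus \mathfrak{p}$ places $xs\varphi(a)$ in $\mathfrak{p}$, and primality of $\mathfrak{p}$ together with $xs \notin \mathfrak{p}$ forces $\varphi(a) \in \mathfrak{p}$, hence $a \in \mathfrak{q}$.

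The one delicate point, rather than a serious obstacle, is the hyperaddition step: since $\varphi$ is not assumed strict, the image of $a + b$ is only a subset of $\varphi(a) + \varphi(b)$, so I would only obtain the containment $\varphi_\mathfrak{p}(s+t) \subseteq \varphi_\mathfrak{p}(s) + \varphi_\mathfrak{p}(t)$ required of a (non-strict) hyperring homomorphism; any attempt at equality would fail in general. Everything else is routine bookkeeping once the reversibility axiom of hyperrings and Proposition \ref{localhyperring} are in hand.
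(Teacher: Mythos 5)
Your proof is correct; the paper itself gives no argument for this lemma but defers to \cite[Theorem 3.6]{Dav1} and \cite[Proposition 8]{Rota}, and what you have written is precisely the standard classical-template argument those references carry out (primality of $\varphi^{-1}(\mathfrak{p})$, well-definedness via the witnessing relation \eqref{eqrel}, the non-strict containment for the hyperaddition, and the identification of the maximal hyperideals through Proposition \ref{localhyperring}(3)). Your closing remark correctly isolates the only point where the hyperstructure matters, namely that non-strictness of $\varphi$ yields only the containment $\varphi_\mathfrak{p}(s+t)\subseteq\varphi_\mathfrak{p}(s)+\varphi_\mathfrak{p}(t)$, which is all the definition requires.
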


\begin{pro}\label{fullyfaithful}
Let $R$ and $H$ be hyperdomains, and $X=\Spec R$, $Y=\Spec H$. Then, we have
\begin{equation}\label{equivofcat}
\Hom(R,H)=\Hom(Y,X),
\end{equation}
where $\Hom(R,H)$ is the set of homomorphisms of hyperrings and $\Hom(Y,X)$ is the set of morphisms of locally hyperringed spaces.
\end{pro}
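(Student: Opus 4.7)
The plan is to produce mutually inverse maps
\[
\alpha:\Hom(R,H)\longrightarrow \Hom(Y,X),\qquad \beta:\Hom(Y,X)\longrightarrow \Hom(R,H),
\]
mirroring the classical proof for affine schemes, with all the technical input supplied by Theorem \ref{sheaf} and Lemma \ref{lochyp}. Given a hyperring homomorphism $\varphi:R\to H$, I would define $\alpha(\varphi)=(f,f^{\#})$ by $f(\mathfrak{p}):=\varphi^{-1}(\mathfrak{p})$ for $\mathfrak{p}\in Y$; continuity is immediate since $f^{-1}(V(I))=V(\langle\varphi(I)\rangle)$. For the sheaf map, note that on a basic open $D(g)\subseteq X$ one has $f^{-1}(D(g))=D(\varphi(g))\subseteq Y$, so Theorem \ref{sheaf}(1) lets me define
\[
f^{\#}_{D(g)}:\mathcal{O}_X(D(g))\simeq R_g\longrightarrow H_{\varphi(g)}\simeq \mathcal{O}_Y(D(\varphi(g))),\qquad \tfrac{a}{g^n}\longmapsto \tfrac{\varphi(a)}{\varphi(g)^n}.
\]
Since $\{D(g)\}$ is a basis and $\mathcal{O}_X,\mathcal{O}_Y$ are sheaves of hyperrings (on integral affine hyperring schemes), these maps glue to a morphism of sheaves of hyperrings; Lemma \ref{lochyp} then guarantees that on stalks $f^{\#}_\mathfrak{p}:\mathcal{O}_{X,f(\mathfrak{p})}\simeq R_{\varphi^{-1}(\mathfrak{p})}\to H_{\mathfrak{p}}\simeq \mathcal{O}_{Y,\mathfrak{p}}$ sends the unique maximal hyperideal into the unique maximal hyperideal, so $(f,f^{\#})$ is a morphism of locally hyperringed spaces.

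Conversely, given $(f,f^{\#})\in\Hom(Y,X)$, I set $\beta(f,f^{\#}):=f^{\#}_X:\mathcal{O}_X(X)\to \mathcal{O}_Y(Y)$, which under the isomorphism of Theorem \ref{sheaf}(1) (with $f=1$) is exactly a hyperring homomorphism $\varphi:R\to H$. The compositions are then checked as follows. For $\beta\circ\alpha$: starting from $\varphi$, the associated $f^{\#}_X$ is by construction $\varphi$ under the canonical identifications $\mathcal{O}_X(X)\simeq R$, $\mathcal{O}_Y(Y)\simeq H$. For $\alpha\circ\beta$: starting from $(f,f^{\#})$ with associated $\varphi=f^{\#}_X$, I must check that $f(\mathfrak{p})=\varphi^{-1}(\mathfrak{p})$ and that $f^{\#}$ is the sheaf map built from $\varphi$. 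The first follows from the locality condition together with the stalk identification $\mathcal{O}_{X,f(\mathfrak{p})}\simeq R_{f(\mathfrak{p})}$ of Theorem \ref{sheaf}(3): an element $a\in R$ lies in $f(\mathfrak{p})$ iff its image in $\mathcal{O}_{X,f(\mathfrak{p})}$ is in the maximal hyperideal, iff (by $f^{\#}_\mathfrak{p}$ sending maximal to maximal) $\varphi(a)$ is in the maximal hyperideal of $H_{\mathfrak{p}}$, iff $\varphi(a)\in\mathfrak{p}$. The second follows because any morphism of sheaves of hyperrings between the affine integral hyperring schemes is determined on the basis $\{D(g)\}$ by its value on global sections $R_g\simeq \mathcal{O}_X(D(g))$ together with the compatibility with restriction, and this in turn is forced by $f^{\#}_X=\varphi$ via the universal property of localization.

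The main obstacle I expect is the second direction of $\alpha\circ\beta=\mathrm{id}$, specifically the identification $f(\mathfrak{p})=\varphi^{-1}(\mathfrak{p})$. Classically this is a one-line consequence of working with local rings, but here one must be careful because colimits do not exist in general in the category of hyperrings; however, for hyperdomains Theorem \ref{sheaf}(3) gives genuine stalks that are localizations, and Lemma \ref{lochyp} supplies exactly the locality statement needed. A secondary technical point is verifying that the basic-open maps $f^{\#}_{D(g)}$ are well defined homomorphisms (not just set maps); this reduces to checking that $\varphi$ sends non-zero divisors of $R$ to non-zero divisors of $H$ on the relevant localizations, which is automatic because $R$ and $H$ are hyperdomains so every nonzero element is regular. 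With these checks in place, the two constructions are mutually inverse and the equality \eqref{equivofcat} follows, giving the claimed equivalence between the opposite category of hyperdomains and the category of integral affine hyperring schemes.
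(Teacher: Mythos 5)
Your proposal is correct and follows essentially the same route as the paper: both directions are built from $\varphi\mapsto(f,f^{\#})$ with $f(\mathfrak{p})=\varphi^{-1}(\mathfrak{p})$, with Theorem \ref{sheaf} supplying the identifications $\mathcal{O}_X(D(g))\simeq R_g$ and $\mathcal{O}_{X,\mathfrak{p}}\simeq R_{\mathfrak{p}}$ and Lemma \ref{lochyp} supplying locality, and the inverse obtained by taking global sections. The only cosmetic difference is that the paper defines $f^{\#}(V)$ on every open $V$ at once as $\psi_V\circ s\circ f$ using the stalk-level maps $\varphi_{\mathfrak{p}}$, whereas you define it on the basic opens $D(g)$ via localization and glue; your treatment of $\alpha\circ\beta=\mathrm{id}$ (in particular $f(\mathfrak{p})=\varphi^{-1}(\mathfrak{p})$ via the locality of the stalk maps) is in fact more explicit than the paper's, which defers these checks to ``similar to the classical case.''
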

\begin{proof}
Clearly, a homomorphism $\varphi:R \longrightarrow H$ of hyperdomains induces the continuous map 
\[f:Y=\Spec H \longrightarrow X=\Spec R, \quad \mathfrak{p} \mapsto \varphi^{-1}(\mathfrak{p}).\]
Then, $f$ induces the morphism of sheaves: $f^\#:\mathcal{O}_X \longrightarrow f_{*}\mathcal{O}_Y$. Indeed, for an open subset $V \subseteq X$, we have $f_{*}\mathcal{O}_Y(V):=\mathcal{O}_Y(f^{-1}(V))=\{t\mid t:f^{-1}(V) \longrightarrow \bigsqcup_{\mathfrak{q} \in f^{-1}(V)}H_\mathfrak{q}\}$, where $t$ satisfies the local condition \eqref{sheafcondition}. First, we define 
\begin{equation}\label{psidef}
\psi_V:=\bigsqcup_{\mathfrak{p} \in V}\varphi_\mathfrak{p}:\bigsqcup_{\mathfrak{p}\in f^{-1}(V)}R_{\varphi^{-1}(\mathfrak{p})} \longrightarrow \bigsqcup_{\mathfrak{p} \in f^{-1}(V)}H_\mathfrak{p},
\end{equation}
where $\varphi_\mathfrak{p}$ is the map induced from $\varphi$ at $\mathfrak{p}$ as in Lemma \ref{lochyp}. We also define
\begin{equation}\label{sheafmapdef}
 f^\#(V):\mathcal{O}_X(V) \longrightarrow  \mathcal{O}_Y(f^{-1}(V)), \quad s \mapsto t:=\psi_V \circ s \circ f.
\end{equation}
We need to check four things. Firstly, we have to show that $t$ as in \eqref{sheafmapdef} is an element of $\mathcal{O}_Y(f^{-1}(V))$. Secondly, we have to show that $f^\#$ is compatible with an inclusion $V \hookrightarrow U$ of open sets of $X$; this is clear from the construction. Thirdly, we have to show that $f^\#(V)$ is a homomorphism of hyperrings. The proof of these three is similar to the classical case. Finally, we have to show that $f^\#(V)$ is local. It easily follows from the third statement of Theorem \ref{sheaf} and Lemma \ref{lochyp}.\\
Conversely, suppose that a morphism 
\[(g,g^\#):Y=(\Spec H,\mathcal{O}_Y) \longrightarrow X=(\Spec R,\mathcal{O}_X)\] 
of integral affine hyperring schemes is given. Since $R$ and $H$ are hyperdomains, we can recover a homomorphism of hyperrings $\varphi: R \longrightarrow H$ by taking global sections thanks to Theorem \ref{sheaf}. Therefore, all we have to prove is that the map $(f,f^\#)$ induced from $\varphi$ as in \eqref{sheafmapdef} is same as $(g,g^\#)$. The proof is similar to the classical case. 
\end{proof}



\subsection{From hyperring schemes to the classical schemes}

\noindent Next, we provide an example showing that an integral hyperring scheme can be linked to the classical theory. Let $A$ be an integral domain containing the field $\mathbb{Q}$ of rational numbers, $X=\Spec A$, and $Y=\Spec(A/\mathbb{Q}^{\times})=\Spec (A\otimes_{\mathbb{Z}}\mathbf{K})$ (see Proposition \ref{connect} for the definition of $A/\mathbb{Q}^\times$ or \cite{con3} for the scalar extension functor $-\otimes_{\mathbb{Z}}\mathbf{K}$). We prove that there exists a canonical homeomorphism $\varphi:Y \longrightarrow X$ such that $\mathcal{O}_Y(\varphi^{-1}(U))\simeq \mathcal{O}_X(U) \otimes_{\mathbb{Z}}\mathbf{K}$ for an open subset $U\subseteq X$. Indeed, such homeomorphism is very predictable from the following observation: let $B$ an integral domain containing the field $\mathbb{Q}$ of rational numbers. Then, a polynomial $f \in B[X_1,...,X_n]$ vanishes if and only if $qf$ vanishes $\forall q \in \mathbb{Q}^{\times}$.

\begin{lem}\label{qlocalem}
Let $A$ be an integral domain containing the field $\mathbb{Q}$ of rational numbers. Let $A \ni f\neq 0$ and $\tilde{f}$ be the image of $f$ under the canonical projection map $\pi:A \longrightarrow R:=A/\mathbb{Q}^{\times}$. Then, we have
\[A_f /\mathbb{Q}^{\times} \simeq R_{\tilde{f}},\]
where $R_{\tilde{f}}$ is the localization of $R$ at $\tilde{f}$. 
\end{lem}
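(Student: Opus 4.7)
The plan is to construct an explicit map
\[ \Phi : A_f/\mathbb{Q}^\times \longrightarrow R_{\tilde{f}}, \qquad \left[\tfrac{a}{f^n}\right] \longmapsto \left[\tfrac{\tilde{a}}{\tilde{f}^n}\right], \]
and then verify that $\Phi$ is a bijective homomorphism of hyperrings. Both sides are built from $A$ in two commuting steps (localize and quotient by the $\mathbb{Q}^\times$-action), so the strategy is essentially to observe that these two operations can be performed in either order because $\mathbb{Q}^\times \subseteq A^\times \subseteq A_f^\times$ acts trivially on the denominators $f^n$.

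First I would check that $\Phi$ is well-defined. Suppose $[a/f^n] = [b/f^m]$ in $A_f/\mathbb{Q}^\times$. Unwinding the definition of the quotient hyperring from Proposition \ref{connect}, this means $a/f^n = q \cdot b/f^m$ in $A_f$ for some $q \in \mathbb{Q}^\times$, which (since $A$ is an integral domain and $f \neq 0$) is equivalent to $f^m a = q f^n b$ in $A$. Taking images in $R = A/\mathbb{Q}^\times$ gives $\tilde{f}^m \tilde{a} = \tilde{f}^n \tilde{b}$ in $R$, and multiplying by any power of $\tilde{f}$ yields $[\tilde{a}/\tilde{f}^n] = [\tilde{b}/\tilde{f}^m]$ in $R_{\tilde{f}}$ via the equivalence relation \eqref{eqrel}. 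The converse direction, giving injectivity of $\Phi$, is the same computation read backwards: equality in $R_{\tilde{f}}$ gives $\tilde{f}^{k+m}\tilde{a} = \tilde{f}^{k+n}\tilde{b}$ in $R$, hence $f^{k+m}a = q f^{k+n}b$ in $A$ for some $q \in \mathbb{Q}^\times$, and cancellation of $f^k$ in the domain $A$ recovers $[a/f^n] = [b/f^m]$. Surjectivity is immediate since every element of $R_{\tilde{f}}$ has the form $[\tilde{a}/\tilde{f}^n]$ for some $a \in A$.

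Next I would check compatibility with the multiplication, which is automatic from the definitions since $\widetilde{ab} = \tilde{a}\tilde{b}$ in $R$. The main (though still routine) step is compatibility with the hyperaddition, which I expect to be the only place where one has to be careful. On the left, using the hyperaddition of Proposition \ref{connect} applied to $A_f/\mathbb{Q}^\times$:
\[ \left[\tfrac{a}{f^n}\right] \oplus \left[\tfrac{b}{f^m}\right] = \left\{ \left[\tfrac{t a f^m + s b f^n}{f^{n+m}}\right] \;\middle|\; t,s \in \mathbb{Q}^\times \right\}. \]
On the right, combining the hyperaddition of the localization $R_{\tilde{f}}$ with the hyperaddition of $R = A/\mathbb{Q}^\times$:
\[ \left[\tfrac{\tilde{a}}{\tilde{f}^n}\right] + \left[\tfrac{\tilde{b}}{\tilde{f}^m}\right] = \left\{ \left[\tfrac{\tilde{c}}{\tilde{f}^{n+m}}\right] \;\middle|\; \tilde{c} \in \widetilde{af^m} + \widetilde{bf^n} \right\} = \left\{ \left[\tfrac{\widetilde{t a f^m + s b f^n}}{\tilde{f}^{n+m}}\right] \;\middle|\; t,s \in \mathbb{Q}^\times \right\}. \]
The two sets are visibly identified under $\Phi$, which proves $\Phi\bigl([a/f^n] \oplus [b/f^m]\bigr) = \Phi([a/f^n]) + \Phi([b/f^m])$ as equalities of subsets. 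The hardest point is keeping the bookkeeping straight between the two layers of hyperaddition (quotient-then-localize versus localize-then-quotient), but once the sets are written out explicitly as above the identification is immediate, and the integral-domain hypothesis on $A$ is what makes the equivalence relation on both sides detect exactly the same pairs.
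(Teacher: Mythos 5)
Your proposal is correct and follows essentially the same route as the paper: the paper defines the very same map $\psi([\frac{a}{f^n}])=\frac{[a]}{\tilde{f}^n}$ and asserts without detail that it is a well-defined, bijective, strict homomorphism, concluding via the first isomorphism theorem for hyperrings. Your write-up simply supplies the verifications (well-definedness and injectivity via cancellation in the domain $A$, and the set-equality of the two hyperadditions) that the paper leaves to the reader.
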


\begin{proof}
Since $A$ is an integral domain, $A_f$ contains $A$ (hence, contains $\mathbb{Q}$). Thus $A_f/\mathbb{Q}^{\times}$ is well defined. Let us define the following map:
\[\psi:A_f/\mathbb{Q}^{\times} \longrightarrow R_{\tilde{f}}, \quad [\frac{a}{f^n}] \mapsto \frac{[a]}{\tilde{f}^n},\]
where $[\frac{a}{f^n}]$ is the equivalence class of $\frac{a}{f^n} \in A_f$ in $A_f/\mathbb{Q}^{\times}$ and $[a]$ is the equivalence class of $a \in A$ in $R=A/\mathbb{Q}^{\times}$. One can easily show that $\psi$ is a well-defined and strict homomorphism of hyperrings which is also bijective. It follows from the first isomorphism theorem of hyperrings (cf. \cite[Proposition $2.11$]{Dav1}) that $\psi$ is an isomorphism of hyperrings. 
\end{proof}

\begin{lem}\label{qfraclem}
Let $A$ be an integral domain containing the field $\mathbb{Q}$ of rational numbers and $R=A/\mathbb{Q}^{\times}$. Then, we have
\[\Frac(A)/\mathbb{Q}^{\times} \simeq \Frac(R)=\Frac(A/\mathbb{Q}^{\times}).\]
\end{lem}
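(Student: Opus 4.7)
The plan is to mimic the proof strategy of Lemma \ref{qlocalem}: write down the obvious candidate for the isomorphism and verify that it is a bijective, strict hyperring homomorphism, then invoke the first isomorphism theorem for hyperrings (or simply observe that a bijective strict homomorphism is an isomorphism). Before anything else, I would note that $R = A/\mathbb{Q}^{\times}$ is a hyperdomain: if $[a][b] = [ab] = [0]$ in $R$, then $ab \in 0 \cdot \mathbb{Q}^{\times} = \{0\}$, so by $A$ being an integral domain either $a = 0$ or $b = 0$. This guarantees that $\Frac(R)$ is defined as in \S \ref{schmeandzeta} as the localization at $S = R \setminus \{0\}$.

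The natural candidate is
\[
\psi : \Frac(A)/\mathbb{Q}^{\times} \longrightarrow \Frac(R), \qquad \Big[\tfrac{a}{b}\Big] \longmapsto \tfrac{[a]}{[b]},
\]
where $[\,\cdot\,]$ on the left denotes the $\mathbb{Q}^{\times}$-coset and on the right denotes the class in $R$. For well-definedness, if $\big[\tfrac{a}{b}\big] = \big[\tfrac{c}{d}\big]$ then $cb = q\, ad$ in $A$ for some $q \in \mathbb{Q}^{\times}$, hence $[cb] = [ad]$ in $R$, which means $\tfrac{[a]}{[b]} = \tfrac{[c]}{[d]}$ in $\Frac(R)$. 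Surjectivity is immediate: any $\tfrac{[a]}{[b]} \in \Frac(R)$ with $[b] \neq 0$ has $b \neq 0$ in $A$, and $\psi\big(\big[\tfrac{a}{b}\big]\big) = \tfrac{[a]}{[b]}$. For injectivity, $\tfrac{[a]}{[b]} = \tfrac{[c]}{[d]}$ in $\Frac(R)$ means there is $[x] \in S$ with $[xad] = [xcb]$ in $R$, so $xad = q\cdot xcb$ in $A$ for some $q \in \mathbb{Q}^{\times}$; since $A$ is an integral domain and $x \neq 0$, we may cancel $x$ to conclude $ad = q\cdot cb$, i.e.\ $\big[\tfrac{a}{b}\big] = \big[\tfrac{c}{d}\big]$ in $\Frac(A)/\mathbb{Q}^{\times}$.

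The multiplicative compatibility of $\psi$ is a one-line check. The main (and only mildly delicate) step is strictness of the additive structure. Using the Connes--Consani formula (Proposition \ref{connect}) for the hyperaddition on a quotient hyperring together with the formula for addition in a localization of a hyperring given in \S 2, I would compute both sides explicitly: on the one hand,
\[
\Big[\tfrac{a}{b}\Big] + \Big[\tfrac{c}{d}\Big] = \Big\{\,\Big[\tfrac{adt + bcs}{bd}\Big] \,\Big|\, t,s \in \mathbb{Q}^{\times}\Big\},
\]
whose image under $\psi$ is $\{\tfrac{[adt+bcs]}{[bd]} \mid t,s\in \mathbb{Q}^{\times}\}$; on the other hand,
\[
\tfrac{[a]}{[b]} + \tfrac{[c]}{[d]} = \Big\{\tfrac{[e]}{[bd]} \,\Big|\, [e] \in [ad] + [bc]\Big\} = \Big\{\tfrac{[q]}{[bd]} \,\Big|\, q = adt + bcs,\ t,s \in \mathbb{Q}^{\times}\Big\},
\]
where the last equality again uses Proposition \ref{connect}. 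The two sets coincide, so $\psi$ is strict. Being a strict bijective homomorphism of hyperrings, $\psi$ is an isomorphism.

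The hard part is really just bookkeeping: one must keep three different quotient/localization constructions straight and verify that the multi-valued hyperaddition on the left exactly reproduces the multi-valued hyperaddition on the right. Once one unpacks both sides using the Connes--Consani definition, the match is tautological, so there is no genuine obstacle — the proof is essentially dual to Lemma \ref{qlocalem}, with $\mathbb{Q}^{\times}$-localization replaced by inversion of all nonzero elements.
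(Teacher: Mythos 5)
Your proposal is correct and follows essentially the same route as the paper, which simply defines the map $\psi\colon \Frac(A)/\mathbb{Q}^{\times}\to\Frac(R)$, $[\tfrac{a}{b}]\mapsto\tfrac{[a]}{[b]}$, and asserts that it is a well-defined, bijective, strict homomorphism "almost identical to Lemma \ref{qlocalem}." You have merely supplied the verifications (well-definedness, injectivity via cancellation in the integral domain, and the set-level match of the two hyperadditions) that the paper leaves to the reader, and all of them check out.
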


\begin{proof}
The proof is similar to Lemma \ref{qlocalem}. For the notational convenience, let us define the following map:
\[\psi:\Frac(A)/\mathbb{Q}^{\times} \longrightarrow \Frac(R), \quad [\frac{b}{a}] \mapsto \frac{[b]}{[a]}.\]
Again, we have to show that this is well defined, bijective, and a strict homomorphism of hyperrings. The proof is almost identical to that of Lemma \ref{qlocalem}.
\end{proof}

\begin{lem}\label{qutientanddomainhomeo}
Let $A$ be a commutative ring, $G\subseteq A^{\times}$ be a multiplicative subgroup, and $A/G$ be the quotient hyperring. Then, $X=\Spec A$ and $Y=\Spec(A/G)$ are homeomorphic (under the Zariski topology).
\end{lem}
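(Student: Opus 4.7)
The plan is to build the homeomorphism out of the projection $\pi : A \longrightarrow A/G$, $a \mapsto aG$, which is a strict surjective homomorphism of hyperrings (viewing the commutative ring $A$ as a hyperring with single-valued addition). I define two candidate inverse maps:
\[
\varphi : Y \longrightarrow X,\ \mathfrak{q} \mapsto \pi^{-1}(\mathfrak{q}),\qquad
\psi : X \longrightarrow Y,\ \mathfrak{p} \mapsto \pi(\mathfrak{p})=\{aG \mid a\in\mathfrak{p}\}.
\]
The central algebraic observation driving everything is that since $G\subseteq A^{\times}$, every ideal $\mathfrak{p}\subseteq A$ is $G$-stable (closed under multiplication by units in $G$), so $\mathfrak{p}$ is a union of $G$-cosets; equivalently, $aG\subseteq\mathfrak{p}$ whenever $a\in\mathfrak{p}$.

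First I would verify that $\varphi$ is well-defined: for a prime hyperideal $\mathfrak{q}$ of $A/G$, the pullback $\pi^{-1}(\mathfrak{q})$ is a prime ideal of $A$. Closure under the ideal operation $a-rb$ follows from taking $t=s=1$ in the defining formula of the hyperaddition of $A/G$, which places $(a-rb)G$ inside $aG-(rG)(bG)\subseteq\mathfrak{q}$; primality and properness transfer directly since $\pi(ab)=\pi(a)\pi(b)$ and $\pi(1)\notin\mathfrak{q}$. Next I would check $\psi$: given a prime $\mathfrak{p}$, the set $\pi(\mathfrak{p})$ is a hyperideal because any element $qG\in aG-(cG)(bG)$ satisfies $q=at-cbs\in\mathfrak{p}$ for some $t,s\in G$, using only that $\mathfrak{p}$ is a ring ideal; primality uses $(ab)G\in\pi(\mathfrak{p})\Longleftrightarrow ab\in\mathfrak{p}$, where the $\Leftarrow$ direction is clear and the $\Rightarrow$ direction invokes $G$-stability. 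The identities $\varphi\circ\psi=\mathrm{id}_X$ and $\psi\circ\varphi=\mathrm{id}_Y$ then reduce respectively to $\bigcup_{g\in G}\mathfrak{p}g=\mathfrak{p}$ (again $G$-stability) and the surjectivity of $\pi$.

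For the topological part, I would show that preimages of closed sets are closed under both maps. For an ideal $I\subseteq A$, unwinding the definition gives
\[
\varphi^{-1}(V(I))=\{\mathfrak{q}\in Y\mid \pi(I)\subseteq\mathfrak{q}\}=V(\langle \pi(I)\rangle),
\]
where $\langle\pi(I)\rangle$ is the hyperideal of $A/G$ generated by $\pi(I)$ (which exists by the standard intersection argument recalled before Lemma \ref{generatingideal}). In the reverse direction, for a hyperideal $J\subseteq A/G$, $G$-stability of prime ideals of $A$ gives
\[
\psi^{-1}(V(J))=\{\mathfrak{p}\in X\mid J\subseteq\pi(\mathfrak{p})\}=V(\pi^{-1}(J)),
\]
since $aG\in\pi(\mathfrak{p})$ is equivalent to $a\in\mathfrak{p}$ whenever $\mathfrak{p}$ is $G$-stable. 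Both right-hand sides are closed by Definition \ref{zarisktopdef}, finishing the argument.

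I expect the main subtlety, rather than any deep obstacle, to be the verification that $\pi(\mathfrak{p})$ is a hyperideal: the hyperaddition in $A/G$ is multi-valued, so the containment $aG-(cG)(bG)\subseteq\pi(\mathfrak{p})$ must hold for every coset representative pair $(t,s)\in G\times G$, not merely for one. Once one observes that $at-cbs\in\mathfrak{p}$ for every such $(t,s)$ because $\mathfrak{p}$ is an ordinary ring ideal, this reduces to a bookkeeping check. The rest of the proof is a direct translation of the classical argument $\Spec(A/I)\hookrightarrow\Spec A$, with the ring quotient replaced by the quotient hyperring construction of Proposition \ref{connect}.
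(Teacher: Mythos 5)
Your proposal is correct and follows essentially the same route as the paper: both build the mutually inverse maps $\mathfrak{p}\mapsto\pi(\mathfrak{p})$ and $\mathfrak{q}\mapsto\pi^{-1}(\mathfrak{q})$ from the projection, verify that images and preimages of primes are prime using exactly the $G$-stability observation ($G\subseteq A^{\times}$), and then check continuity. The only cosmetic differences are that the paper checks continuity on basic open sets $D(f)$ rather than on closed sets $V(I)$, and proves bijectivity via injectivity plus surjectivity rather than by computing both composites directly.
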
\label{homeo}
\begin{proof}
If $G=\{1\}$ then there is nothing to prove. Thus, we may assume that $|G|\geq 2$. We define the following map:
\[ \sim : X \longrightarrow Y, \quad \mathfrak{q} \mapsto \tilde{\mathfrak{q}}:=\{\alpha G \mid \alpha \in \mathfrak{q}\}.\]
We claim that the map $\sim$ is well defined. Indeed, we have
\[\alpha G = \beta G \Longleftrightarrow \alpha=\beta u, u\in G \subseteq A^{\times} \Longleftrightarrow \alpha,\beta \in \mathfrak{q} \textrm{ or } \alpha, \beta \not \in \mathfrak{q},\]
therefore $\tilde{\mathfrak{q}}$ is uniquely determined by $\mathfrak{q}$. Furthermore, $\tilde{\mathfrak{q}}$ is a hyperideal. In fact, we have $0G \in \tilde{\mathfrak{q}}$. If $aG \in \tilde{\mathfrak{q}}$ then $(-a)G=-(aG) \in \tilde{\mathfrak{q}}$. For $rG \in A/G$ and $aG \in \tilde{\mathfrak{q}}$, since $(rG)(aG)=raG$ and $ra \in \mathfrak{q}$, it follows that $(rG)(aG) \in \tilde{\mathfrak{q}}$. Suppose that $aG,bG \in \tilde{\mathfrak{q}}$. One can observe that $aG, bG \in \tilde{\mathfrak{q}} \Longleftrightarrow a, b \in \mathfrak{q}$ since $G\subseteq A^\times$ and $\mathfrak{q}$ is a prime ideal. Therefore, for $zG \in \tilde{\mathfrak{q}}$, we may assume that $z=at+bh$ for some $t,h \in G$. It follows that $z \in \mathfrak{q}$, hence $zG \in \tilde{\mathfrak{q}}$. This shows that $\tilde{\mathfrak{q}}$ is a hyperideal. Next, we show that $\tilde{\mathfrak{q}}$ is prime. Suppose that $(aG)(bG)=(abG) \in \tilde{\mathfrak{q}}$ and $aG \not \in \tilde{\mathfrak{q}}$. This implies that $ab \in \mathfrak{q}$ and $au \not\in \mathfrak{q}$ $\forall u \in G$, hence $a \not \in \mathfrak{q}$. Since $\mathfrak{q}$ is prime, this implies that $b \in \mathfrak{q}$, and $bG \in \tilde{\mathfrak{q}}$.\\
Next, we claim that the map $\sim$ is continuous. Let $\varphi:=\sim$ for the notational convenience. It is enough to show that $\varphi^{-1}(D(fG))$ is open. We have the following:
\begin{equation}\label{homeoeq}
\varphi^{-1}(D(fG))=D(f).
\end{equation}
Indeed, if $\mathfrak{q} \in D(f)$, then $\varphi(\mathfrak{q})=\tilde{\mathfrak{q}}$ can not contain $fG$ by definition. Hence, $D(f) \subseteq \varphi^{-1}(D(fG))$. Conversely, suppose that $\mathfrak{p} \in \varphi^{-1}(D(fG))$, then $\varphi(\mathfrak{p}) \in D(fG)$. Since $\varphi(\mathfrak{p})=\tilde{\mathfrak{p}}=\{\alpha G \mid \alpha \in \mathfrak{p}\}$ and $f \not \in \mathfrak{p}$, it follows that $\mathfrak{p} \in D(f)$, hence $\varphi^{-1}(D(fG)) \subseteq D(f)$. This proves \eqref{homeoeq}, hence $\sim$ is continuous.\\
Finally, we construct the inverse of the map $\varphi=\sim$. The canonical projection map $\pi:A\longrightarrow A/G$ induces the following canonical map:
\[\psi:Y \longrightarrow X, \quad \mathfrak{p} \mapsto \pi^{-1}(\mathfrak{p}).\]
Clearly, $\psi$ is continuous since $\psi^{-1}(D(f))=D(fG)$. We claim that $\varphi$ and $\psi$ are inverses to each other. Since both $\varphi$ and $\psi$ are continuous, it is enough to show that $\varphi$ is bijective and $\varphi\circ \psi=id_Y$. First, we show that $\varphi$ is injective. Assume that $\varphi(\mathfrak{q})=\varphi(\mathfrak{p})$ for $\mathfrak{p},\mathfrak{q} \in X$. Then, for $x \in \mathfrak{q}$, we have $y \in \mathfrak{p}$ such that $xG=yG$. It follows that $x=yg$ for some $g \in G$, hence $x \in \mathfrak{p}$. Since the argument is symmetric, we have $\mathfrak{p}=\mathfrak{q}$.\\ 
For the surjectivity of $\varphi$, take an element $\wp \in \Spec A/G$. We consider $\alpha G$ as the subset $\alpha G:=\{\alpha g \mid g \in G\}\subseteq A$ and define 
\[\mathfrak{p}:=\bigcup_{\alpha G \in \wp}\alpha G.  \]
We have to show that $\mathfrak{p}$ is a prime ideal of $A$. We have $0 \in \mathfrak{p}$. Moreover, $a \in \mathfrak{p} \Longleftrightarrow a \in \alpha G$ for some $\alpha G \in \wp$. It follows that $-\alpha G \in \wp$ and hence $-a \in \mathfrak{p}$.
Furthermore, for $a \in \mathfrak{p}$ and $r \in A$, we have $aG \in \wp$ and $rG \in A/G$. It follows from $(rG)(aG)=(raG) \in \wp$ that $ra \in \mathfrak{p}$. If $a,b \in \mathfrak{p}$ then $aG,bG \in \wp$. This implies that $aG+bG \subseteq \wp$ and hence $a+b \in \mathfrak{p}$. This proves that $\mathfrak{p}$ is an ideal. We observe that $\mathfrak{p}$ can not be $A$ since that implies $1 \in \mathfrak{p}$ and $1G \in \wp$, but $\wp\neq A/G$. One further observes that $\mathfrak{p}$ is prime since for $ab \in \mathfrak{p}$ and $a \not \in \mathfrak{p}$, we have $(aG)(bG) \in \wp$ and $aG \not \in \wp$. This implies that $bG \in \wp$, hence $b \in \mathfrak{p}$. Obviously, we have $\varphi(\mathfrak{p})=\wp$. This shows that $\varphi$ is surjective. In fact, one can see that $\mathfrak{p}=\psi(\wp)$. Thus, we have $\varphi(\mathfrak{p})=\varphi \circ \psi(\wp)=\wp$ and therefore $\varphi \circ \psi = id_Y$. This completes our proof.
\end{proof}

\begin{pro}\label{qsheaflem}
Let $A$ be an integral domain containing the field $\mathbb{Q}$ of rational numbers. Let $R:=A/\mathbb{Q}^{\times}$, $X=(\Spec A,\mathcal{O}_X)$, $Y=(\Spec R,\mathcal{O}_Y)$, and $\pi:A \longrightarrow A/\mathbb{Q}^{\times}$ be the canonical projection map. Then, the following hold.
\begin{enumerate}
\item
\[\varphi:\Spec R \longrightarrow \Spec A,\quad \mathfrak{p} \mapsto \pi^{-1}(\mathfrak{p})\]
is a homeomorphism.
\item
For an open subset $U \subseteq X$, we have
\[\mathcal{O}_Y(\varphi^{-1}(U)) \simeq \mathcal{O}_X(U) /\mathbb{Q}^{\times}.\] 
\end{enumerate}
\end{pro}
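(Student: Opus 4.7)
Part (1) is essentially Lemma \ref{qutientanddomainhomeo} applied with $G = \mathbb{Q}^{\times} \subseteq A^{\times}$: the map $\varphi$ is exactly the inverse-image continuous map $\psi$ constructed there, and the same proof shows it is a homeomorphism. I would simply cite this lemma. Before attacking (2), I would observe that $R = A/\mathbb{Q}^{\times}$ is a hyperdomain: if $[a][b] = [ab] = [0]$ in $R$ then $ab \in 0\cdot \mathbb{Q}^{\times} = \{0\}$, so one of $a,b$ vanishes because $A$ is an integral domain. In particular, Theorem \ref{sheaf} applies to both $X = \Spec A$ and $Y = \Spec R$.

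The strategy for (2) is to use the description $\mathcal{O}_X(U) \simeq \bigcap_{D(f)\subseteq U} A_f \subseteq \Frac(A)$ and analogously $\mathcal{O}_Y(\varphi^{-1}(U)) \simeq \bigcap_{D(\tilde f)\subseteq \varphi^{-1}(U)} R_{\tilde f} \subseteq \Frac(R)$, both provided by Theorem \ref{sheaf}(2). By Lemma \ref{qfraclem} there is a canonical strict surjective hyperring homomorphism $\Pi\colon \Frac(A) \twoheadrightarrow \Frac(A)/\mathbb{Q}^{\times} \simeq \Frac(R)$, and by the homeomorphism in (1) the basic opens $D(f) \subseteq U$ correspond bijectively to $D(\tilde f) \subseteq \varphi^{-1}(U)$ via $f\mapsto \tilde f = \pi(f)$. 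I would then define
\[
\Psi\colon \mathcal{O}_X(U) \longrightarrow \mathcal{O}_Y(\varphi^{-1}(U)),\qquad u \longmapsto \Pi(u),
\]
and verify that it lands in $\mathcal{O}_Y(\varphi^{-1}(U))$: if $u \in A_f$ with $\mathfrak{p}' \in \varphi^{-1}(D(f)) = D(\tilde f)$, then writing $u = a/f^n$ we get $\Pi(u) = [a]/\tilde f^{\,n} \in R_{\tilde f}$, which is exactly the local condition required by Theorem \ref{sheaf}(2) at $\mathfrak{p}'$.

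For surjectivity of $\Psi$, take $v \in \mathcal{O}_Y(\varphi^{-1}(U)) \subseteq \Frac(R)$ and use Lemma \ref{qfraclem} to lift it to some $u \in \Frac(A)$ with $\Pi(u) = v$. For each $\mathfrak{p}' \in \varphi^{-1}(U)$ the element $v$ admits a representation $[\alpha]/\tilde g$ with $\tilde g \notin \mathfrak{p}'$, i.e.\ $g \notin \varphi(\mathfrak{p}') = \pi^{-1}(\mathfrak{p}')$, and by the isomorphism $A_g/\mathbb{Q}^{\times} \simeq R_{\tilde g}$ of Lemma \ref{qlocalem} this means $u$ itself (after multiplying by a suitable $q \in \mathbb{Q}^{\times}$) has a representative $q\alpha/g \in A_g$; hence $u \in \mathcal{O}_X(U)$. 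For the kernel, if $\Psi(u) = \Psi(u')$ then $\Pi(u) = \Pi(u')$, and by Lemma \ref{qfraclem} this forces $u = q u'$ for some $q \in \mathbb{Q}^{\times}$. Thus $\Psi$ descends to a bijection
\[
\widetilde{\Psi}\colon \mathcal{O}_X(U)/\mathbb{Q}^{\times} \longrightarrow \mathcal{O}_Y(\varphi^{-1}(U))
\]
(here the left-hand side is the quotient hyperring of Proposition \ref{connect}, which makes sense since $\mathbb{Q}^{\times} \subseteq A^{\times} \subseteq \mathcal{O}_X(U)^{\times}$). That $\widetilde{\Psi}$ is multiplicative and preserves the hyperaddition follows directly from the definitions of the two quotient constructions and the fact that $\Pi$ is a strict hyperring homomorphism; Lemma \ref{setwithhyperaddtionisotohyperring} then upgrades the set bijection $\widetilde{\Psi}$ to a hyperring isomorphism.

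\textbf{Main obstacle.} The only subtle point is proving that every $v \in \mathcal{O}_Y(\varphi^{-1}(U))$ has a global lift in $\mathcal{O}_X(U)$ (rather than only local lifts that a priori could differ by different elements of $\mathbb{Q}^{\times}$ from point to point). This is handled by first lifting $v$ to a single element of $\Frac(A)$ via Lemma \ref{qfraclem}, and only afterwards checking the local denominator condition open-by-open via Lemma \ref{qlocalem}; this order of steps is what makes the proof go through cleanly.
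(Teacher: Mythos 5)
Your proposal is correct and follows essentially the same route as the paper: part (1) is exactly Lemma \ref{qutientanddomainhomeo}, and part (2) identifies both hyperrings of sections inside $\Frac(A)$ and $\Frac(R)$ via Theorem \ref{sheaf}, then transports one to the other using Lemmas \ref{qlocalem} and \ref{qfraclem} together with the bijection $D(f)\leftrightarrow D(\tilde f)$ of basic opens. The only cosmetic difference is that the paper proves the set identity $(\bigcap_{D(f)\subseteq U}A_f)/\mathbb{Q}^{\times}=\bigcap_{D(f)\subseteq U}(A_f/\mathbb{Q}^{\times})$ directly, using $\mathbb{Q}\subseteq A_f$ to absorb the a priori point-dependent scalars $q_f$, whereas you package the same computation as surjectivity and injectivity (modulo $\mathbb{Q}^{\times}$) of the induced map $\Psi$.
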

\begin{proof}
The first assertion is proved in Lemma \ref{qutientanddomainhomeo}. Note that the map induced from the canonical projection $\pi:A \longrightarrow A/\mathbb{Q}^\times$ is the desired homeomorphism $\varphi$.\\ 
For the second claim, we use the following classical identification: 
\begin{equation}\label{classicalidentification}
\mathcal{O}_X(U)=\bigcap_{D(f) \subseteq U} A_f \subseteq K:=\Frac(A).
\end{equation}
Each $\mathcal{O}_X(U)$ is an integral domain containing $\mathbb{Q}$, hence $\mathcal{O}_X(U) /\mathbb{Q}^{\times}$ is well defined. From \eqref{classicalidentification}, we may assume that 
\begin{equation}\label{use1}
\mathcal{O}_X(U)/\mathbb{Q}^{\times} \textrm{ and }  A_f/\mathbb{Q}^{\times} \textrm{ are subsets of } K/ \mathbb{Q}^{\times}.
\end{equation} 
Also, from Lemma \ref{qlocalem} and \ref{qfraclem}, we have 
\begin{equation}\label{use2}
A_f/\mathbb{Q}^{\times} \simeq R_{\tilde{f}}, \quad K/\mathbb{Q}^{\times} \simeq L:=\Frac(R). 
\end{equation}
It follows from \eqref{use1} and \eqref{use2} that
\begin{equation}\label{use1and2}
\mathcal{O}_X(U)/\mathbb{Q}^{\times}=(\bigcap_{D(f) \subseteq U}A_f)/\mathbb{Q}^{\times}=\bigcap_{D(f)\subseteq U} (A_f /\mathbb{Q}^{\times}).
\end{equation}
In fact, the first equality simply follows from \eqref{classicalidentification}. It remains to show the second equality. Indeed, we know that $[a] \in \mathcal{O}_X(U) /\mathbb{Q}^{\times} \Longleftrightarrow qa \in \mathcal{O}_X(U)=\bigcap_{D(f) \subseteq U} A_f \Longleftrightarrow qa \in A_f$ $\forall f \in A$ such that $D(f)\subseteq U$ for some $q \in \mathbb{Q}^{\times}$. This implies that $[a] \in A_f /\mathbb{Q}^{\times}$ $\forall f \in A$ such that $D(f) \subseteq U$. It follows that
\[\mathcal{O}_X(U)/\mathbb{Q}^{\times}=(\bigcap_{D(f) \subseteq U}A_f)/\mathbb{Q}^{\times} \subseteq \bigcap_{D(f)\subseteq U} (A_f /\mathbb{Q}^{\times}).\]
Conversely, if $[a] \in \bigcap_{D(f)\subseteq U} (A_f /\mathbb{Q}^{\times})$ then $[a] \in A_f/\mathbb{Q}^{\times}$ $\forall f \in A$ such that $D(f)\subseteq U$. In other words, for each $f$, there exists $q_f \in \mathbb{Q}^{\times}$ such that $aq_f \in A_f$. However, $\mathbb{Q}\subseteq A_f$ $\forall f \in A$, hence $a \in A_f$ and $a \in \bigcap_{D(f)\subseteq U} A_f$. It follows that
\[ [a] \in (\bigcap_{D(f)\subseteq U}A_f)/\mathbb{Q}^{\times}=\mathcal{O}_X(U)/\mathbb{Q}^{\times}, \]
and this shows \eqref{use1and2}.\\
Finally, let $\tilde{f}=\pi(f)$ as in Lemma \ref{qlocalem}. Then, there is a one-to-one correspondence between the following sets (cf. the proof of Lemma \ref{qutientanddomainhomeo}): 
\begin{equation}\label{twoequal}
\mathcal{A}:=\{f \in A \mid D(f) \subseteq U\},\quad \mathcal{B}:=\{\tilde{f} \in R \mid D(\tilde{f}) \subseteq \varphi^{-1}(U)\},
\end{equation}
where $\varphi$ is the canonical homeomorphism in the first assertion. Therefore, together with Theorem \ref{sheaf}, we have
\[\mathcal{O}_X(U)/\mathbb{Q}^{\times}=\bigcap_{D(f)\subseteq U}(A_f/\mathbb{Q}^{\times}) \simeq \bigcap_{D(f)\subseteq U} R_{\tilde{f}} = \bigcap_{D(\tilde{f})\subseteq \varphi^{-1}(U)} R_{\tilde{f}} \simeq \mathcal{O}_Y(\varphi^{-1}(U)).\]
This proves the second assertion.
\end{proof}
\begin{rmk}
Proposition \ref{qsheaflem} states that if $A$ is an integral domain containing $\mathbb{Q}$, then $X:=\Spec A \simeq X_{\mathbf{K}}=\Spec A_\mathbf{K}=\Spec(A/\mathbb{Q}^{\times})$. In other words, the spaces are homeomorphic, but their functions(sections) are different. In fact, what the second assertion states is that sections of $X_{\mathbf{K}}$ can be derived from sections of $X$ by tensoring them with $\mathbf{K}$ in the sense of \cite{con3}.
\end{rmk}

\appendix 
\section{From semistructures to hyperstructures}\label{app}
\subsection{Basic definitions of semiring theory}
\begin{mydef}
A semigroup is a set $M$ with a binary operation $*$ such that 
\[a*(b*c)=(a*b)*c \quad \forall a,b,c \in M.\]
If $a*b=b*a$ $\forall a,b \in M$, we say that $M$ is a commutative semigroup. If there exists a unique element $1_M \in M$ such that $m*1_M=1_M*m=m$ $\forall m \in M$, then $M$ is called a monoid.  
\end{mydef}
\begin{mydef}
A semiring is a set $S$ with two binary operations $+$ and $*$ such that $(S,+)$ is a commutative monoid with the identity element $0_S$ and $(S,*)$ is a monoid with the identity element $1_S$ such that
\begin{enumerate}
\item
$a*(b+c)=a*b+a*c$, $(b+c)*a=b*a+c*a$, $\forall a,b,c \in S.$
\item
$0*a=0$ $\forall a \in S$.
\end{enumerate}
If $(S,*)$ is a commutative monoid then $S$ is called a commutative semiring. If $(S\backslash\{0\},*)$ is an abelian group then $S$ is called a semifield.
\end{mydef}
In what follows we always assume that all semirings and monoids are commutative. 

\begin{mydef}
\begin{enumerate}
\item
By an idempotent monoid we mean a monoid $M$ such that $a+a=a$ $\forall a \in M$. An (additively) idempotent semiring is a semiring $(S,+,*)$ such that $(S,+)$ is an idempotent monoid. 
\item
An idempotent monoid (or a semiring) $M$ is called selective if $x+y \in \{x,y\}$ $\forall x,y \in M$. 
\end{enumerate}

\end{mydef}
\begin{myeg}
Let $\mathbb{B}:=\{0,1\}$ be a two point set. We impose the commutative multiplication as $1*0=0$ and $1*1=1$. The commutative addition is given by $1+1=1$ and $1+0=1$. $\mathbb{B}$ is the smallest idempotent semifield called the boolean semifield.
\end{myeg}
Let $M$ be a monoid with the identity element $0$. One can define the following canonical partial order on $M$:
\begin{equation}\label{totalorder}
x \leq y \Longleftrightarrow x+y=y.
\end{equation} 
By a partial order on $M$ we mean a binary relation on $M$ which is reflexive, transitive, and antisymmetric. A partial order is said to be total if for any $x,y \in M$, we have $x\leq y$ or $y \leq x$. We claim that when $M$ is selective, such order is total. In fact, we know that $x+y=x$ or $x+y=y$ $\forall x,y \in M$, hence $x\leq y$ or $y \leq x$. For an introduction to theory of semirings we refer the readers to \cite{semibook}.
\subsection{The symmetrization process} \label{sym}
In his paper \cite{Henry}, S.Henry introduced the symmetrization process which generalizes in a suitable way the construction of the Grothendieck group completion of a multiplicative monoid. This process allows one to encode the structure of a $\mathbb{B}$-semimodule as the `positive' part of a hypergroup interpreted as a `$\mathbf{S}$-hypermodule'.\\ 
We briefly recall this symmetrization process. Suppose that $M$ is a totally ordered idempotent monoid. We introduce the following notation
\[s(M):=\{(s,1),(s,-1),0=(0,1)=(0,-1) \mid s \in M\backslash \{0\}\}.\]
To minimize the notation we denote $(s,1):=s$, $(s,-1):=-s$, and $|(s,1))|=|(s,-1)|=s$. For any $X=(x,p) \in s(M)$, we define sign$(X)=p$. Then $s(M)$ is a hypergroup with the addition given by
\begin{equation}\label{add}
x+y = \left\{ \begin{array}{ll}
x & \textrm{if $|x|>|y|$ or $x=y$}\\
y & \textrm{if $|x|<|y|$ or $x=y$}\\
$[$-x,x$]$=\{(t,\pm 1)\mid t\leq |x|)\} & \textrm{if $y=-x$}
\end{array} \right.
\end{equation}
We denote with $s:M \longrightarrow s(M)$, $s \mapsto (s,1)$ the associated map.\\
Let $H$ be a hypergroup and $M$ be a monoid. We say that a map $f:M \longrightarrow H$ is additive if 
\[f(0)=0 \textrm{ and } f(a+b) \in f(a)+f(b) \subseteq H\quad  \forall a,b, \in M.\] 
Henry proved that the construction of $s(M)$ determines the minimal hypergroup associated to a commutative monoid $M$ as the following universal property states.\\

\textbf{(Universal Property)}:
Let $M$ be a monoid such that the canonical order as in \eqref{totalorder} is total. Then, for any hypergroup $K$ and an additive map $g:M \longrightarrow K$, there exists a unique homomorphism $h:s(M) \longrightarrow K$ of hypergroups such that $g=h \circ s$.\\
In fact, such $h:s(M) \longrightarrow K$ is given by $h(X)=\textrm{sign}(X)g(x)$,  $\forall X=(x,p) \in s(M)$ (cf. \cite[Theorem $5.1$]{Henry}).

\begin{rmk}\label{upgradereamk}
\begin{enumerate}
\item
Let $M$ be a monoid such that the canonical order as in \eqref{totalorder} is total. Assume also that $M$ is equipped with a smallest element. Then $M$ can be upgraded to a semiring by defining the addition law as the maximum (with respect to the canonical order) and the multiplication as the usual addition.
For example, $\mathbb{R}_{max}$ is the semifield obtained from the (multiplicative) monoid $(\mathbb{R} \cup \{-\infty\},+)$. 
\item
The symmetrization process can be applied to a general class of monoids (cf. \cite[Theorem $5.1 $]{Henry}). In fact, for a monoid $M$, $s(M)$ is a hypergroup if and only if $M$ satisfies the following condition: for all $x,y,z,w \in M$,
\begin{equation}\label{symconditiongeneral}
x+y=z+w \Longrightarrow \exists b \in M\textrm{ s.t. } 
\left\{ \begin{array}{ll}
x+b=z;\quad b+w=y,\\
\textrm{or }x=z+b;\quad  w=b+y
\end{array} \right.
\end{equation}
Henry also proved (cf. \cite[Proposition $6.2$]{Henry}) that when $M$ is an idempotent monoid, $M$ fulfills the condition \eqref{symconditiongeneral} if and only if the canonical order of $M$ as in \eqref{totalorder} is total.
\item
As Connes and Consani pointed out in \cite{con7}, the symmetrization process can be understood in terms of the functor `extension of scalars'.
\end{enumerate}
\end{rmk}

If $B=(B,+,\cdot)$ is a semiring such that the symmetrization process can be applied to the additive monoid $(B,+)$, the multiplicative structure of $B$ induces the corresponding multiplicative structure on $s(B)$ in the component-wise way. In other words, one can define the multiplication law on $s(B)$ such that:
\[(x,p)\cdot (y,q):=(xy,pq),\quad p,q \in \{-1,1\};\quad  1\cdot 1=(-1)\cdot(-1)=1,\quad 1\cdot(-1)=-1.\] 

\begin{rmk}\label{symmetonlymulti}
Let $M$ be a semiring allowing for the symmetrization process. We will prove that under the component-wise multiplication, $s(M)$ is not a hyperring but only a multiring (cf. \cite{mars1}). A multiring is a weaker version of a hyperring in the sense that a hyperring fulfills the distributive law $x(y+z)=xy+xz$ whereas the notion of a multiring only assumes the weak distributive property
\[ x(y+z) \subseteq xy+xz.\]
For example, let $M$ be the semiring whose underlying set is $\mathbb{Z}_{\geq 0}$ with the addition given by $x+y:=\max\{x,y\}$, and the multiplication given by the usual multiplication. Then $s(M)$ does not satisfy the distributive law. For example, $2(3-3) \neq 6-6=[-6,6]$. Indeed, we have $5 \in 6-6=[-6,6]$, but $5$ can not be an element of $2(3-3)=2\cdot[-3,3]$ because $2$ can not divide $5$ in usual sense. For $s(M)$ to satisfy the distributive law it seems necessary to add a suitable divisibility condition on the multiplication of $s(M)$. A particular case is studied in \cite{jaiungthesis}.
\end{rmk}

\begin{lem}\label{s(B)multi}
Let $B$ be a selective semiring. Then the symmetrization $B_S:=s(B)$ is a multiring with the component-wise multiplication. In particular, if $B$ is a totally ordered (with the canonical order) semifield, then $B_S$ is a hyperfield.
\end{lem}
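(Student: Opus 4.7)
The plan is to break the verification into three blocks: the additive hypergroup structure, the multiplicative monoid structure, and the (weak) distributive law, and then upgrade to a hyperfield under the semifield hypothesis.

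First I would observe that a selective semiring is automatically additively idempotent (since $x+x \in \{x,x\}=\{x\}$), and its canonical order \eqref{totalorder} is total (given $x,y$, selectivity forces $x+y=x$ or $x+y=y$, i.e.\ $y\le x$ or $x\le y$). By Henry's theorem, as recalled in Remark \ref{upgradereamk}(2), this is exactly the condition needed for $(s(B),+)$ to be a canonical hypergroup with the addition \eqref{s(B)addition}. The component-wise multiplication $(x,p)\cdot(y,q):=(xy,pq)$ inherits associativity, commutativity, the identity $1=(1_B,1)$, and the absorbing property of $0$ directly from the corresponding properties of $(B,\cdot)$, together with the usual $\{\pm 1\}$ sign rule. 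No case analysis is required here.

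The substantive step is the weak distributive law $X(Y+Z)\subseteq XY+XZ$ for $X=(x,p),\;Y=(y,q),\;Z=(z,r)\in s(B)$. I would first note the crucial monotonicity: in a selective (hence idempotent) semiring, $y\le z$ gives $xy+xz=x(y+z)=xz$, hence $xy\le xz$. Now I would split into cases according to \eqref{s(B)addition}. When $Y+Z$ is the single element $Y$ (say because $|Y|>|Z|$ or $Y=Z$), monotonicity gives $|XY|\ge |XZ|$, so $XY$ appears in $XY+XZ$ in each subcase (whether $XY=XZ$, $|XY|>|XZ|$, or $XY=-XZ$), and the inclusion is immediate. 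The decisive case is $Y=-Z$: here
\[
X(Y+Z)=\{(xt,\pm p)\mid t\le y\}\cup\{0\},\qquad XY+XZ=\{(s,\pm 1)\mid s\le xy\}\cup\{0\},
\]
and each $(xt,\pm p)$ with $t\le y$ lies in the right-hand side because monotonicity yields $xt\le xy$. This gives $X(Y+Z)\subseteq XY+XZ$ and establishes the multiring axioms.

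For the hyperfield statement, I would assume $B$ is a totally ordered semifield. Nonzero elements of $s(B)$ have two-sided inverses via $(x,p)^{-1}=(x^{-1},p)$, so $(s(B)\setminus\{0\},\cdot)$ is an abelian group. It remains to upgrade the weak distributivity to equality, and the only case where the inclusion above is not already an equality is again $Y=-Z$: given $(s,\varepsilon)$ with $s\le xy$ and $x\neq 0$, set $t:=x^{-1}s$; invertibility of $x$ promotes the non-strict inequality $s\le xy$ to $t\le y$, and then $(s,\varepsilon)=X\cdot(t,\varepsilon p)\in X(Y+Z)$. Therefore $X(Y+Z)=XY+XZ$, and $s(B)$ is a hyperfield. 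The main obstacle, and the reason the first claim stops at multirings rather than hyperrings, is precisely the failure of this ``division'' step in a general selective semiring, as illustrated by the counterexample $2\cdot(3-3)\neq 6-6$ of Remark \ref{symmetonlymulti}.
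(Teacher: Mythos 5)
Your proof is correct, and the core of it — monotonicity of multiplication with respect to the canonical order, followed by a case analysis on whether $Y+Z$ is a singleton or an interval $[-Y,Y]$ — is exactly the paper's argument for the weak distributive law $X(Y+Z)\subseteq XY+XZ$. The one place you genuinely diverge is the hyperfield upgrade: the paper simply notes that $B_S$ is a multifield and invokes the known fact that multifields and hyperfields coincide, whereas you verify full distributivity directly by solving $s=xt$ with $t=x^{-1}s$ in the case $Y=-Z$ and checking $t\le y$ via monotonicity applied to $x^{-1}$. Your version is more self-contained and pinpoints precisely where invertibility rescues the failure exhibited in Remark \ref{symmetonlymulti}; the paper's is shorter but leans on an external result. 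You also make explicit (via selectivity $\Rightarrow$ idempotency and totality of the canonical order, hence Henry's criterion) why $(s(B),+)$ is a hypergroup, which the paper leaves implicit.
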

\begin{proof}
We first note that $x\leq y$ implies $xz \leq yz$ for all $z \in B$. In fact, it follows from $x\leq y \Longleftrightarrow x+y=y$ that $xz+yz=yz \Longleftrightarrow xz\leq yz$. For the first assertion, all we have to show is that $X(Y+Z)\subseteq XY+XZ$ for all $X,Y,Z \in B_S$. If $X=0$, then there is nothing to prove. Therefore we may assume that $X\neq 0$. Let $Y=(y,p)$, $Z=(z,q)$, $X=(x,r)$. When $\#(Y+Z)=1$, it follows from \eqref{add} that there are three possible cases. The first case is when $Y=Z$. In this case, we have $X(Y+Z)=XY=XY+XY=XY+XZ$. The second case is when $p=q$, but $y\neq z$. Since $B$ is selective, we may further assume that $y>z$. Therefore, we have $xy \geq xz$ and $X(Y+Z)=XY \in XY+XZ$. The final case is when $p \neq q$ and $y \neq z$. But, in this case, the similar argument as the second case shows that $X(Y+Z)\subseteq XY+XZ$. When $\#(Y+Z) \neq 1$, from \eqref{add}, we may assume that $Y=(y,1)$, $Z=(y,-1)$, and $X=(x,r)$. Take any $T=(t,p) \in (Y+Z)$. It follows from \eqref{add} that $t \leq y$, hence $xt \leq xy$. Therefore we have $XT=(xt,pr) \in XY+XZ$. When $B$ is a semifield, each non-zero element of $B_S$ has a multiplicative inverse. Therefore $B_S$ is a multifield and it is well-known that any multifield is a hyperfield (and vice versa).
\end{proof}

\bibliography{Algeo_Hyper}\bibliographystyle{plain}

\end{document}